\newtheorem{conj}{Conjecture}[section]
\newtheorem{problem}{Problem}
\newcommand{\ds}{\displaystyle}
\crefname{hypothesis}{Hypothesis}{Hypotheses}
\title{Linear Asymptotic Convergence of Anderson Acceleration: Fixed-Point Analysis\thanks{Submitted to the editors DATE.
\funding{This work was funded by XXXX}}}
\author{Hans De Sterck\thanks{Department of Applied Mathematics
University of Waterloo, 200 University Ave W, Waterloo, ON N2L 3G1, Canada
  (\email{hdesterck@uwaterloo.ca}, \email{yunhui.he@uwaterloo.ca}).}
  \and Yunhui He\footnotemark[2]}
\begin{document}

\maketitle

\begin{abstract}
We study the asymptotic convergence of AA($m$), i.e., Anderson acceleration with window size $m$ for accelerating fixed-point methods $x_{k+1}=q(x_{k})$, $x_k \in \mathbb{R}^n$. Convergence acceleration by AA($m$) has been widely observed but is not well understood.
We consider the case where the fixed-point iteration function $q(x)$ is differentiable and the convergence of the fixed-point method itself is root-linear. We identify numerically several conspicuous properties of AA($m$) convergence: First, AA($m$) sequences $\{x_k\}$ converge root-linearly but the root-linear convergence factor depends strongly on the initial condition.
Second, the AA($m$) acceleration coefficients $\boldsymbol{\beta}^{(k)}$ do not converge but oscillate as $\{x_k\}$ converges to $x^*$. To shed light on these observations, we write the AA($m$) iteration as an augmented fixed-point iteration $\boldsymbol{z}_{k+1} =\Psi(\boldsymbol{z}_k)$, $\boldsymbol{z}_k \in \mathbb{R}^{n(m+1)}$ and analyze the continuity and differentiability properties of $\Psi(\boldsymbol{z})$
and $\boldsymbol{\beta}(\boldsymbol{z})$.
We find that the vector of acceleration coefficients $\boldsymbol{\beta}(\boldsymbol{z})$ is not continuous at the fixed point $\boldsymbol{z}^*$. However, we show that, despite the discontinuity of $\boldsymbol{\beta}(\boldsymbol{z})$, the iteration function $\Psi(\boldsymbol{z})$ is Lipschitz continuous and directionally differentiable at $\boldsymbol{z}^*$ for AA(1), and we generalize this to AA($m$) with $m>1$ for most cases. Furthermore, we find that $\Psi(\boldsymbol{z})$ is not differentiable at $\boldsymbol{z}^*$. We then discuss how these theoretical findings relate to the observed convergence behaviour of AA($m$). The discontinuity of $\boldsymbol{\beta}(\boldsymbol{z})$ at $\boldsymbol{z}^*$ allows $\boldsymbol{\beta}^{(k)}$ to oscillate as $\{x_k\}$ converges to $x^*$, and the non-differentiability of $\Psi(\boldsymbol{z})$ allows AA($m$) sequences to converge with root-linear convergence factors that strongly depend on the initial condition.
Additional numerical results illustrate our findings for several linear and nonlinear fixed-point iterations $x_{k+1}=q(x_{k})$ and for various values of the window size $m$.
\end{abstract}

\begin{keywords}
Anderson acceleration,  fixed-point method, root-linear convergence, asymptotic convergence factor
\end{keywords}

\begin{AMS}
65B05, 
65F10, 
65H10, 
65K10 
\end{AMS}

\section{Introduction}
This paper concerns convergence acceleration methods for fixed-point (FP) iterations of the type
\begin{equation}\label{eq:fixed-point}
  x_{k+1}=q(x_{k}), \quad x_k\in\mathbb{R}^n,  \quad k=0,1,2,\ldots, \tag{FP}
\end{equation}
that seek to approximate a fixed point $x^*=q(x^*)$.
Specifically, we consider the following nonlinear acceleration iteration with window size $m$:
\begin{equation}\label{eq:AA-iteration}
  x_{k+1}= q(x_k) + \sum_{i=1}^{\min(k,m)}\beta_{i}^{(k)}(q(x_k)-q(x_{k-i})) \qquad k=0,1,2,\ldots,
\end{equation}
where the coefficients $\beta_{i}^{(k)}$ are determined by solving a small optimization
problem in every step $k$ that minimizes a linearized residual in the new iterate $x_{k+1}$. Method
\cref{eq:AA-iteration} is known as \emph{Anderson acceleration (AA)} \cite{anderson1965iterative}.
More precisely, defining the residuals $r(x)$ of the fixed-point iteration by
\begin{equation}\label{eq:resid}
	r(x)=x-q(x),
\end{equation}
AA($m$), with window size $m$, solves in every iteration the optimization problem
\begin{equation}\label{eq:Andersonbetas}
	\min_{\{\beta_i^{(k)} \}} \bigg\| r(x_k) + \sum_{i=1}^{\min(k,m)} \beta_i^{(k)} ( r(x_k) - r(x_{k-i})) \bigg\|,
\end{equation}
with up to $m$ variables, and optimization problem \cref{eq:Andersonbetas} is normally posed in the $2$-norm.
Our discussion will focus on the case where iteration \cref{eq:fixed-point} is, by itself, a convergent iteration, but this is, in fact, not necessary for Anderson iteration \cref{eq:AA-iteration} to converge or be effective.

Assume that $k>m$. Define $r_k =x_k-q(x_k)$ and
\begin{equation}\label{eq:beta-vector-form}
  \boldsymbol{\beta}^{(k)} =\begin{bmatrix} \beta_1^{(k)} \\ \vdots \\ \beta_m^{(k)} \end{bmatrix},\quad
  R_k = \begin{bmatrix} r_k-r_{k-1} &  r_k-r_{k-2} & \cdots & r_k-r_{k-m} \end{bmatrix}.
\end{equation}
Then, using the 2-norm in \cref{eq:Andersonbetas}, the solution of the least-squares problem is given  by
\begin{equation}\label{eq:AAm-beta-form}
 \boldsymbol{\beta}^{(k)}  = -(R_k^TR_k)^{-1} R_k^Tr_k,
\end{equation}
if $R_{k}^TR_k$ is invertible. Or more generally, we can write
\begin{equation}\label{eq:AAm-beta-form-pseudo}
 \boldsymbol{\beta}^{(k)}  = -R_k^{\dag}r_k,
\end{equation}
where $R_k^{\dag}$ is the pseudo-inverse of $R_k$, and we note that
\begin{equation}\label{eq:two-ways-pseudo-inverse}
 R_k^{\dagger}= \big(R_k^TR_k\big)^{\dagger} R_k^T.
\end{equation}
This covers the case where $R_{k}^TR_k$ is not invertible, by taking $\boldsymbol{\beta}^{(k)}$ as
the minimum-norm solution of the least-squares problem in this case.

The specific case of AA($m$) with $m=1$ in \cref{eq:AA-iteration} reads
\begin{equation}
\label{eq:anderson-1-step}
  x_{k+1} = (1+\beta_k) q(x_k) -\beta_k q(x_{k-1}),
\end{equation}
where we have defined
$$\beta_k=\beta_1^{(k)}.$$
When $m=1$ and $r_k \ne r_{k-1}$,
\begin{equation}\label{eq:AA-1-step-beta}
  \beta_k = \ds \frac{-r_k^T(r_k-r_{k-1})}{\|r_k-r_{k-1}\|^2}.
\end{equation}
When $r_k = r_{k-1}$, we can, according to \cref{eq:AAm-beta-form-pseudo}, take $\beta_k=0$.
Let $\boldsymbol{z}_k=\begin{bmatrix} x_k \\ x_{k-1}\end{bmatrix}$. Then we can in turn write AA(1) as
a fixed-point iteration,
\begin{equation}\label{eq:AA-fixed-point}
  \boldsymbol{z}_{k+1} =\Psi(\boldsymbol{z}_k), \tag{AA}
\end{equation}
with
\begin{equation}\label{eq:AA(1)-system}
 \Psi(\boldsymbol{z}_k)
  = \begin{bmatrix}
   q(x_k)+\beta(\boldsymbol{z}_k)\big( q(x_k)-q(x_{k-1})\big)\\
   x_k
  \end{bmatrix},
\end{equation}
where $\beta(\boldsymbol{z}_k) =\beta_k$. As we explain in some more detail below, AA($m$) for $m>1$ can also be written in the form of fixed-point iteration \cref{eq:AA-fixed-point} using a similar lifting approach, with $\boldsymbol{z}_k \in \mathbb{R}^{n(m+1)}$. In what follows, vectors such as $\boldsymbol{z}_k$ that live in the augmented space $\mathbb{R}^{n(m+1)}$ will be indicated by bold font.

In this paper, we are interested in how the asymptotic convergence speed of iteration \cref{eq:fixed-point} relates to the asymptotic convergence speed of the accelerated iteration \cref{eq:AA-fixed-point}. Specifically, we consider the case where $q(x)$ is differentiable at $x^*$, such that the asymptotic convergence of \cref{eq:fixed-point} is linear. We seek to investigate the improvement in asymptotic convergence speed resulting from the acceleration of \cref{eq:fixed-point} by  \cref{eq:AA-fixed-point}.

For reasons that will become clear below, the relevant notion of convergence is root-linear (or \emph{r-linear}) convergence \cite{ortega2000iterative}:
\begin{definition}[r-linear convergence of a sequence]
Let $\{x_k\}$  be any sequence that converges to $x^*$. Define
\begin{equation*}
  \rho_{\{x_k\}} = \limsup\limits_{k\rightarrow \infty}\|x^*-x_k\|^{\frac{1}{k}}.
\end{equation*}
We say $\{x_k\}$ converges r-linearly with r-linear convergence factor $\rho_{\{x_k\}}$ if $\rho_{\{x_k\}}\in(0,1)$, and $r$-superlinearly if $\rho_{\{x_k\}}=0$. The ``r-'' prefix stands for ``root''.
\end{definition}

Since the r-linear convergence factor of an iteration sequence resulting from $x_{k+1}=q(x_k)$, for a given iteration function $q(x)$, may depend on the initial guess $x_0$ and on the specific fixed point $x^*$ the iteration converges to in the case multiple fixed points exist, we need to consider the worst-case r-linear convergence factor for convergence of method $x_{k+1}=q(x_k)$ to a specific fixed point $x^*$:
\begin{definition}[r-linear convergence of a fixed-point iteration]\label{def:rho-method}
Consider fixed-point iteration $x_{k+1}=q(x_k)$. We define the set of iteration sequences that converge to a given fixed point $x^*$ as
\begin{equation*}
C(q, x^*)=\Big\{  \{x_k\}_{k=0}^{\infty} | \quad x_{k+1} = q(x_k) \textrm{ for } k=0,1,\ldots, \textrm{ and } \lim_{k \rightarrow \infty}x_k=x^*\Big\},
\end{equation*}
and the worst-case r-linear convergence factor over $C(q, x^*)$ is defined as
\begin{equation}\label{eq:r-factor-defi}
  \rho_{q,x^*} = \sup \Big\{ \rho_{\{x_k\}} |\quad  \{x_k\}\in C(q, x^*) \Big\}.
\end{equation}
We say that the FP method converges r-linearly to $x^*$ with r-linear convergence factor $\rho_{q,x^*}$ if $\rho_{q,x^*} \in(0,1)$.
\end{definition}

The following classical theorem (see, e.g., \cite{ortega2000iterative}) shows that, if the iteration function $q(x)$ in
\cref{eq:fixed-point} is differentiable at $x^*$, the worst-case r-linear convergence factor, $\rho_{q,x^*}$, is determined by the spectral radius of the Jacobian $q'(x)$ evaluated at $x^*$:
\begin{theorem}\label{thm:Ostrowski-Theorem}[Ostrowski Theorem]
Suppose that $q: D\subset \mathbb{R}^n \rightarrow \mathbb{R}^n$ has a
fixed point $x^*$ that is an interior point of $D$, and is differentiable at $x^*$. If the spectral radius
of $q'(x^*)$ satisfies $0<\rho(q'(x^*)) < 1$, then the FP method converges r-linearly with $\rho_{q,x^*} = \rho(q'(x^*))$.
\end{theorem}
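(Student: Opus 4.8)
The plan is to set $A:=q'(x^*)$ and $\rho:=\rho(q'(x^*))$, and to track the error $e_k:=x_k-x^*\in\mathbb{R}^n$. Differentiability of $q$ at $x^*$ together with $q(x^*)=x^*$ gives the error recursion
\[
  e_{k+1}=A\,e_k+g(e_k),\qquad \|g(e)\|=o(\|e\|)\ \text{ as }e\to 0,
\]
which I would make the central object of the analysis. I would prove the two inequalities $\rho_{q,x^*}\le\rho$ and $\rho_{q,x^*}\ge\rho$ separately. A preliminary observation that is used throughout: the quantity $\limsup_{k}\|e_k\|^{1/k}$ is independent of the norm, since any norm-equivalence constants disappear under the $k$-th root as $k\to\infty$; this frees me to work in whichever norm is convenient.

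For the upper bound I would invoke the standard fact that for every $\varepsilon>0$ there is a vector norm whose induced matrix norm satisfies $\|A\|_*\le\rho+\varepsilon$. Fixing $\varepsilon$ and $\eta$ with $\gamma:=\rho+\varepsilon+\eta<1$ (possible because $\rho<1$), the $o(\|e\|)$ estimate supplies a ball $B$ about $x^*$ on which $\|g(e)\|_*\le\eta\|e\|_*$, so that $\|e_{k+1}\|_*\le(\|A\|_*+\eta)\|e_k\|_*\le\gamma\|e_k\|_*$ whenever $e_k\in B$. Any sequence in $C(q,x^*)$ converges to $x^*$ and hence lies in $B$ for all $k\ge K$, giving $\|e_k\|_*\le\gamma^{k-K}\|e_K\|_*$; taking $k$-th roots and $\limsup$ yields $\rho_{\{x_k\}}\le\gamma$. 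Letting $\varepsilon,\eta\to 0$ and taking the supremum over $C(q,x^*)$ gives $\rho_{q,x^*}\le\rho$.

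For the lower bound I would exhibit trajectories whose r-factor is (arbitrarily close to) $\rho$. Let $\lambda$ be an eigenvalue of $A$ with $|\lambda|=\rho$, which exists and is nonzero since $\rho>0$, and let $w$ be a corresponding left eigenvector, $w^{\mathsf H}A=\lambda w^{\mathsf H}$. Tracking the scalar $\phi_k:=w^{\mathsf H}e_k$ converts the vector recursion into $\phi_{k+1}=\lambda\phi_k+w^{\mathsf H}g(e_k)$, whence
\[
  |\phi_k|\ \ge\ \rho^k|\phi_0|-\sum_{j=0}^{k-1}\rho^{\,k-1-j}\,|w^{\mathsf H}g(e_j)|,\qquad |w^{\mathsf H}g(e_j)|\le\|w\|\,\|g(e_j)\|.
\]
Choosing an initial error along a direction with $\phi_0=w^{\mathsf H}e_0\ne 0$, the aim is to show the perturbation sum is negligible relative to $\rho^k|\phi_0|$, so that $|\phi_k|\ge c\,\rho^k$ and therefore $\|e_k\|\ge|\phi_k|/\|w\|\ge (c/\|w\|)\rho^k$, forcing $\rho_{\{x_k\}}\ge\rho$. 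Note the complex-eigenvalue case needs no separate treatment, since only the modulus $|\lambda^{\,k-1-j}|=\rho^{\,k-1-j}$ enters these estimates.

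Controlling that perturbation sum is the step I expect to be the main obstacle. The difficulty is that bare differentiability provides no rate for the remainder $g$, so $\sum_j\rho^{\,k-1-j}\|g(e_j)\|$ cannot be dominated by a single clean geometric series. I would handle it by exploiting that the \emph{relative} remainder $\|g(e_j)\|/\|e_j\|\to 0$, combined with the upper-bound estimate, which confines the entire trajectory to a small ball and bounds $\|e_j\|$: splitting the sum into a finite initial block and a tail, and making the uniform relative-remainder bound small by taking the initial error small enough along the chosen direction. This shows that starting points with nonzero component along the dominant left eigendirection produce r-factor exactly $\rho$, while the exceptional points orthogonal to it can only converge faster. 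Taking the supremum over $C(q,x^*)$ then gives $\rho_{q,x^*}\ge\rho$, and together with the upper bound we conclude $\rho_{q,x^*}=\rho\in(0,1)$, i.e.\ r-linear convergence with factor $\rho(q'(x^*))$.
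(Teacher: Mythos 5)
The paper does not prove this statement; it quotes it as a classical result from Ortega--Rheinboldt, so your attempt has to be judged against the standard proof. Your upper-bound half is sound: the adapted norm with $\|A\|_*\le\rho+\varepsilon$, the forward-invariant ball on which $\|e_{k+1}\|_*\le\gamma\|e_k\|_*$, and the norm-independence of $\limsup_k\|e_k\|^{1/k}$ together give $\rho_{q,x^*}\le\rho$ correctly.

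The lower bound, however, has a genuine gap at exactly the step you flag, and the repair you propose does not close it. From $\phi_k=\lambda^k\phi_0+\sum_{j=0}^{k-1}\lambda^{k-1-j}w^{\mathsf H}g(e_j)$ you need the perturbation sum to stay below $\tfrac12\rho^k|\phi_0|$. Even under the \emph{best conceivable} decay estimate $\|e_j\|\le K\rho^j\|e_0\|$ and a uniform relative bound $\|g(e_j)\|\le\delta\|e_j\|$, the sum is of order
\[
\delta K\|e_0\|\sum_{j=0}^{k-1}\rho^{k-1-j}\rho^{j}=\delta K\|e_0\|\,k\,\rho^{k-1},
\]
which exceeds $\rho^k|\phi_0|$ (note $|\phi_0|\le\|w\|\|e_0\|$) once $k\gtrsim\rho/(\delta K)$, no matter how small you make $\delta$ or $\|e_0\|$ --- shrinking the initial error rescales both sides proportionally and only delays the crossover. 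With the decay bound actually available from your upper-bound half, $\|e_j\|\le K(\rho+\varepsilon)^j\|e_0\|$, the situation is worse: the convolution grows like $(\rho+\varepsilon)^k\gg\rho^k$. An attempt to instead argue by contradiction from $\|e_j\|\le C_\nu\nu^j$ with $\nu<\rho$ fails for the same reason, because the constant $C_\nu$ of the hypothesized faster decay is not under your control. The missing idea is a \emph{per-step} lower bound on the dominant spectral component rather than a cumulative variation-of-constants bound: decompose $e=(u,v)$ along the spectral subspace $U$ for the eigenvalues of modulus $\rho$ and its complement, choose norms with $\|A|_U^{-1}\|\le(\rho-\varepsilon)^{-1}$ and $\|A|_V\|\le\rho'+\varepsilon<\rho-\varepsilon$, and show the cone $\{\|v\|\le\|u\|\}$ is forward invariant near $x^*$ with $\|u_{k+1}\|\ge(\rho-\varepsilon-2\delta)\|u_k\|$ there; letting $\varepsilon,\delta\to0$ over ever-smaller initial errors inside the cone gives $\rho_{q,x^*}\ge\rho$. (Equivalently, one may invoke the Perron-type theorem for $e_{k+1}=(A+E_k)e_k$ with $\|E_k\|\to0$.) Without some such mechanism the lower bound is not established.
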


It is useful to consider the special case where the iteration functions $q(x)$ in \cref{eq:fixed-point} is affine, i.e.,
$q(x)=M \, x+b$ and
\begin{equation}\label{eq:fixed-point-linear}
  x_{k+1}=M \, x_{k} + b, \quad  k=0,1,2,\ldots,
\end{equation}
where $x_k, b\in\mathbb{R}^n$ and $M\in\mathbb{R}^{n \times n}$.
Since the error propagation equation for iteration \cref{eq:fixed-point-linear} is the linear iteration
\begin{equation}\label{eq:fixed-point-linear-error}
  e_{k+1}=M \, e_{k} \quad  k=0,1,2,\ldots,
\end{equation}
where the error of iterate $x_k$ is defined by $e_k=x^*-x_k$,
we call iteration \cref{eq:fixed-point} linear when $q(x)$ is affine, and nonlinear otherwise.
It is well-known that, in the linear case, AA($m$) with infinite window size is essentially equivalent
to the GMRES iterative method applied to $(I-M) \, x=b$ \cite{walker2011anderson}.

Given an iteration \cref{eq:fixed-point} and a set of initial conditions $x_0$ that converge to a fixed point $x^*$, we can define the set of all sequences in $C(q, x^*)$ that converge with a smaller r-linear convergence factor than $\rho_{q,x^*}$ as
\begin{equation}\label{eq:S-definition}
  S = \Big\{ \{x_k\}\in C(q, x^*) | \quad  \rho_{\{x_k\}} <\rho_{q,x^*}  \Big\}.
\end{equation}
In the linear case it is easy to see that, when $M$ is diagonalizable and has eigenvalues that are not all of equal magnitude, $\rho_{\{x_k\}}= \rho_{q,x^*}$, except for initial conditions $x_0$ that lie in a set of measure zero in $\mathbb{R}^n$. That is, in this case the set $S$ has measure zero ($|S|=0$) in $C(q, x^*)$.

At this point it is useful to consider a simple example to motivate the questions we address in this paper.
The simple linear example we consider is\\

\begin{problem}\label{prob:linear2x2}
\begin{equation}\label{eq:q-linear-2x2-simple}
  x_{k+1} = M x_k, \qquad
   M=\begin{bmatrix}
  2/3 & 1/4\\
  0  & 1/3
  \end{bmatrix}.
\end{equation}
Clearly, the eigenvalues of $M$ are $\lambda_1=2/3$ and $\lambda_2=1/3$.
\end{problem}

\begin{figure}[h]
\centering
\includegraphics[width=.49\textwidth]{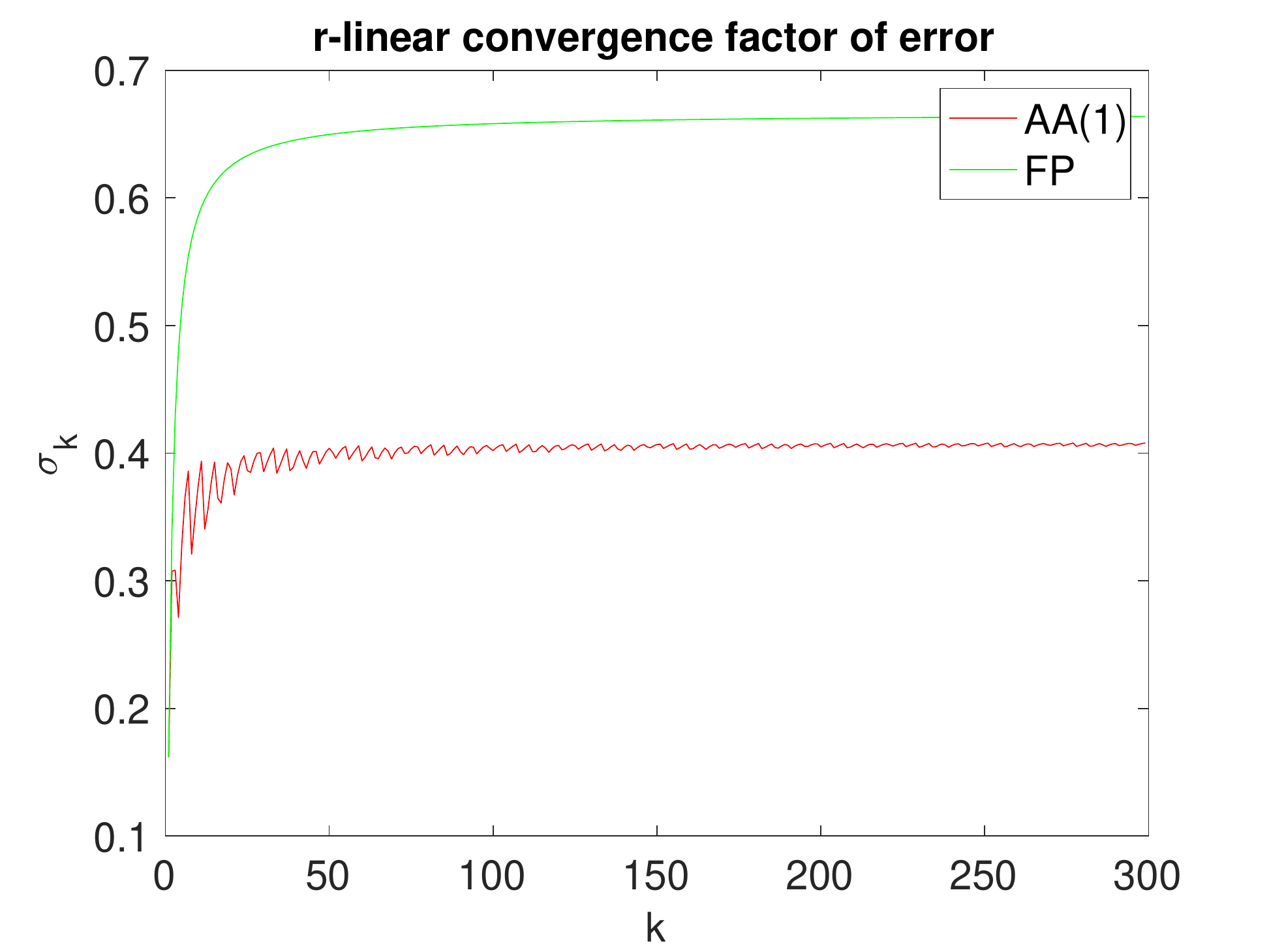}
\includegraphics[width=.49\textwidth]{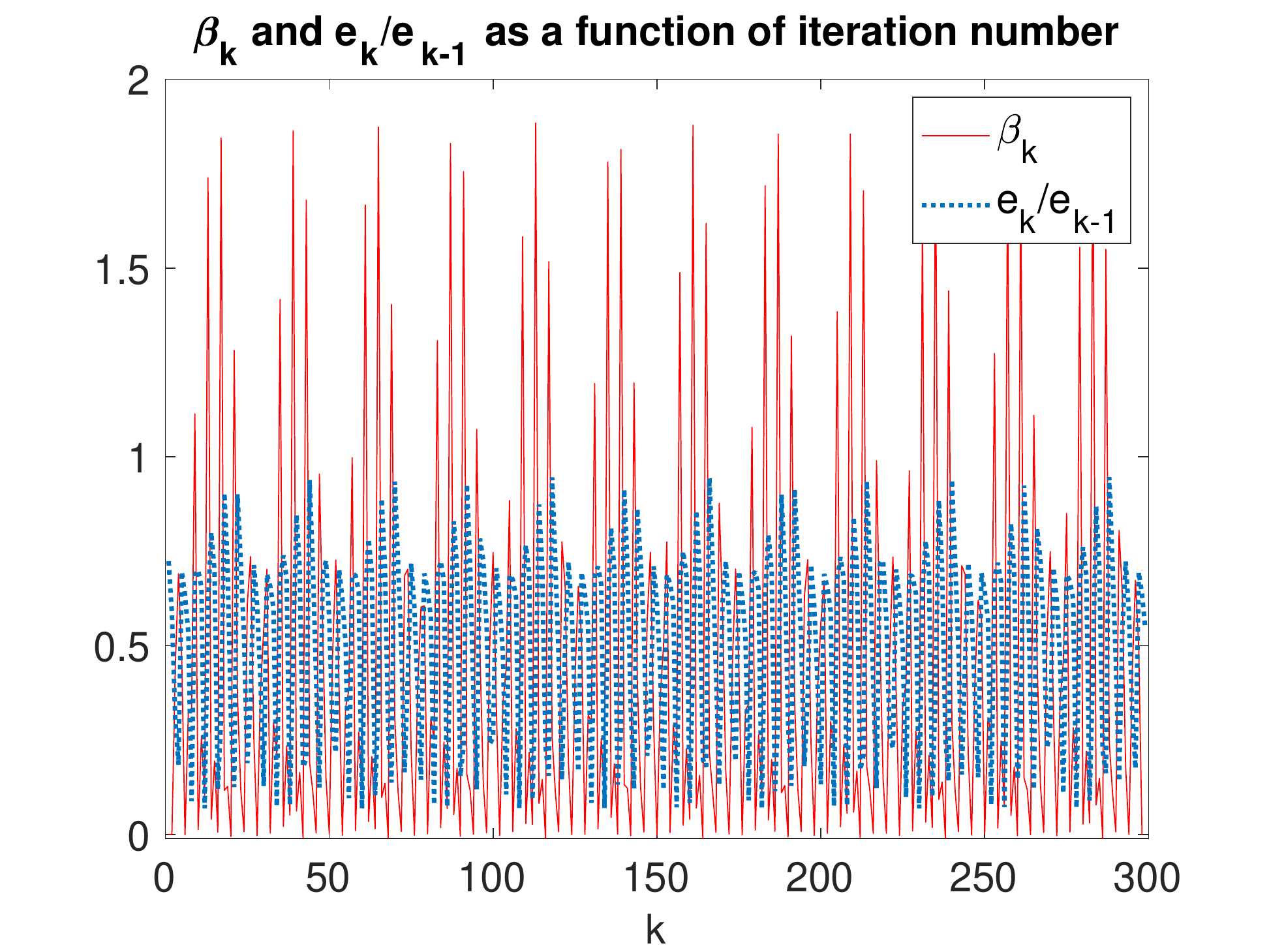}
\caption{\cref{prob:linear2x2} with initial guess $x_0=[0.2,0.1]^T$: (left panel) Root-averaged error $\sigma_k$ as a function of iteration number $k$ for FP iteration \cref{eq:fixed-point} and AA(1) iteration \cref{eq:anderson-1-step}. (right panel) AA(1) coefficient $\beta_k$ and error ratio $e_k/e_{k-1}$ as a function of iteration number $k$.}
\label{fig:simple-beta}
\end{figure}

\cref{fig:simple-beta} (left panel) shows convergence curves for the root-averaged error
\begin{equation}\label{eq:sigma}
  \sigma_k = \|x^*-x_k\|^{ \frac{1}{k}},
\end{equation}
of both the FP iteration \cref{eq:q-linear-2x2-simple} and its AA(1) acceleration \cref{eq:anderson-1-step},
for initial condition $x_0=[0.2,0.1]^T$.
It is easy to see that, for all initial conditions, except when $x_0$ lies in the eigenvector direction of $\lambda_2=1/3$, $\sigma_k$ for FP iteration \cref{eq:q-linear-2x2-simple} must converge to $\rho(M)=\lambda_1=2/3$; the FP $\sigma_k$ convergence curve in \cref{fig:simple-beta} is consistent with this, and confirms that the sequence $\{x_k\}$ generated by FP iteration \cref{eq:q-linear-2x2-simple} converges r-linearly with convergence factor $\rho_{\{x_k\}}=\rho(M)=\lambda_1$. This is also consistent with \cref{thm:Ostrowski-Theorem}, with $q'(x^*)=M$.

The left panel of \cref{fig:simple-beta} also indicates that the AA(1) sequence $\{x_k\}$ converges r-linearly:
$\sigma_k$ for AA(1) appears to converge to a value $\in (0,1)$ that is smaller than $\rho(M)=2/3$, indicating asymptotic acceleration of
FP iteration \cref{eq:q-linear-2x2-simple} by AA(1).
The right panel of \cref{fig:simple-beta} shows, perhaps surprisingly, that the $\beta_k$ sequence of AA(1) does not converge as $k \rightarrow \infty$: it oscillates as $x_k$ converges to $x^*$. The figure indicates that this is related to oscillations in the error ratio $e_k/e_{k-1}$: the error ratio $e_k/e_{k-1}$ does not converge but oscillates as
$k \rightarrow \infty$ and $x_k \rightarrow x^*$, reflecting the well-known fact that q-linear convergence is often not obtained in cases where \cref{thm:Ostrowski-Theorem} guarantees r-linear convergence. Further numerical results in this paper will show that this convergence behavior is generic, both in the case of linear and nonlinear iterations \cref{eq:fixed-point}, and also for window size $m>1$: in most cases, $\{x_k\}$ for AA($m$) applied to \cref{eq:fixed-point} converges r-linearly, but $\boldsymbol{\beta}^{(k)}$ oscillates as $k \rightarrow \infty$. This paper will provide analysis of the AA($m$) fixed-point function $\Psi(\boldsymbol{z})$ in iteration \cref{eq:AA-fixed-point} that sheds light on the mechanism by which AA($m$) can converge r-linearly while $\boldsymbol{\beta}^{(k)}$ does not converge.

\begin{figure}[h]
\centering
\includegraphics[width=.49\textwidth]{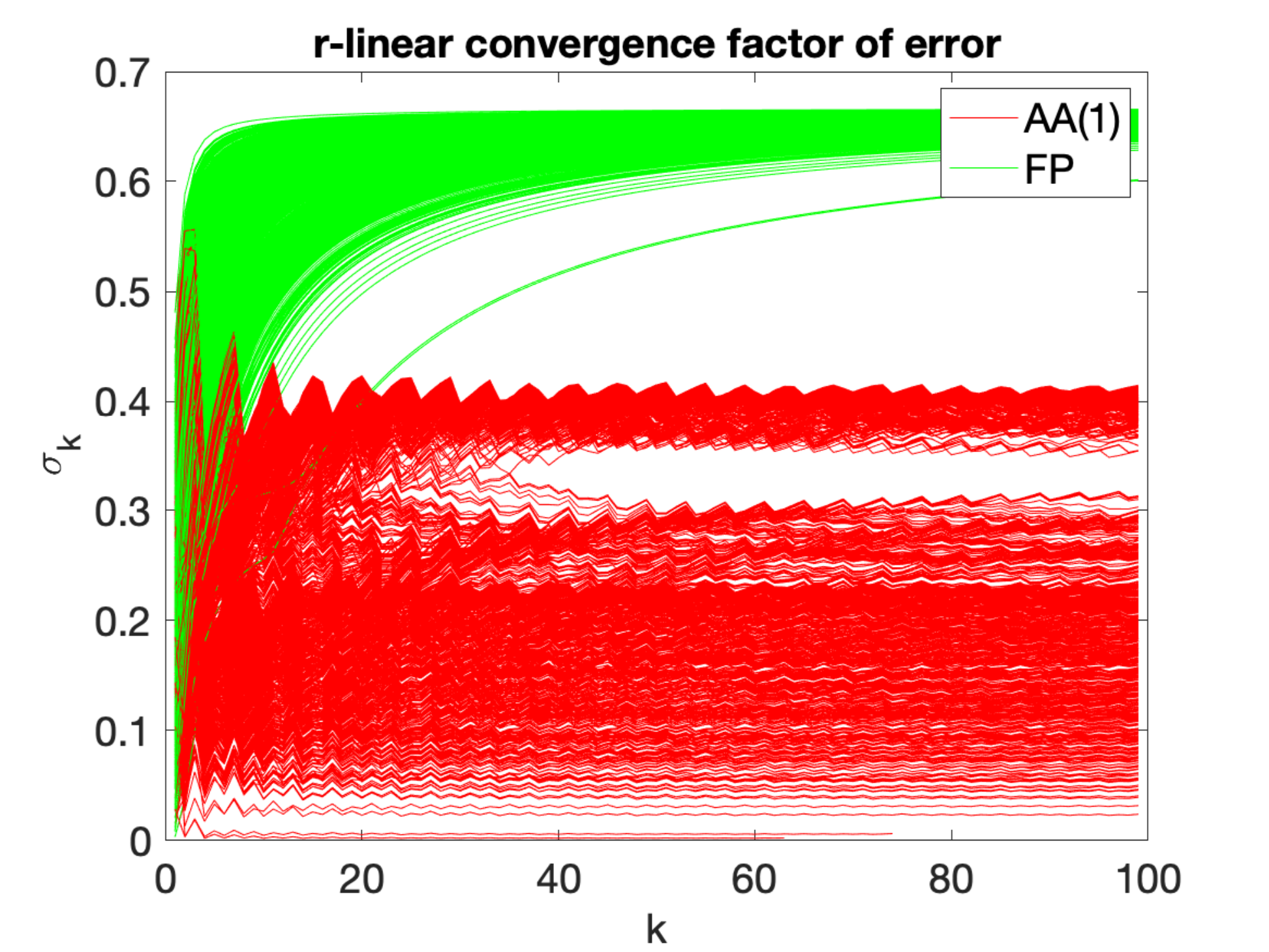}
\includegraphics[width=.49\textwidth]{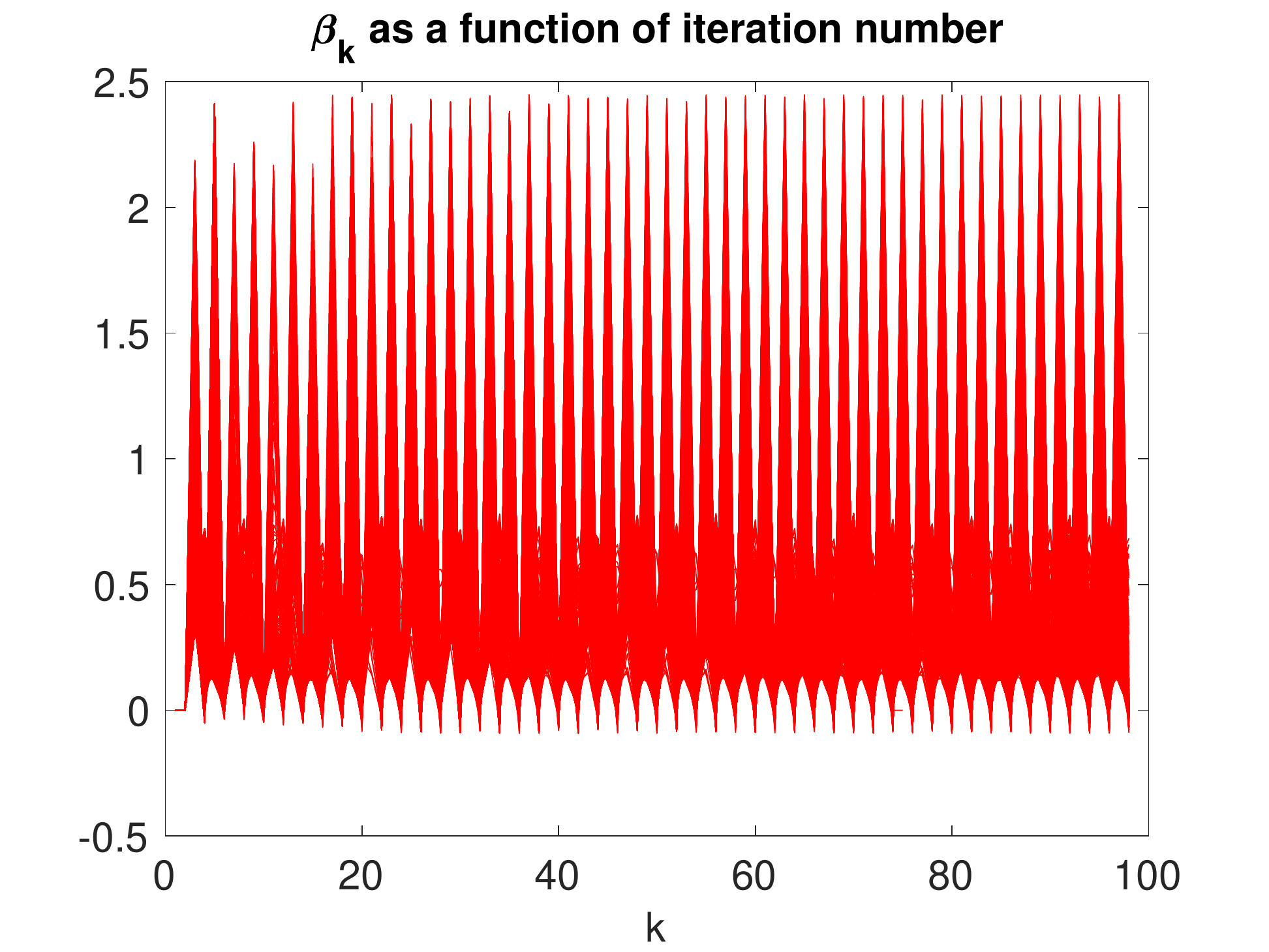}
\includegraphics[width=.325\textwidth]{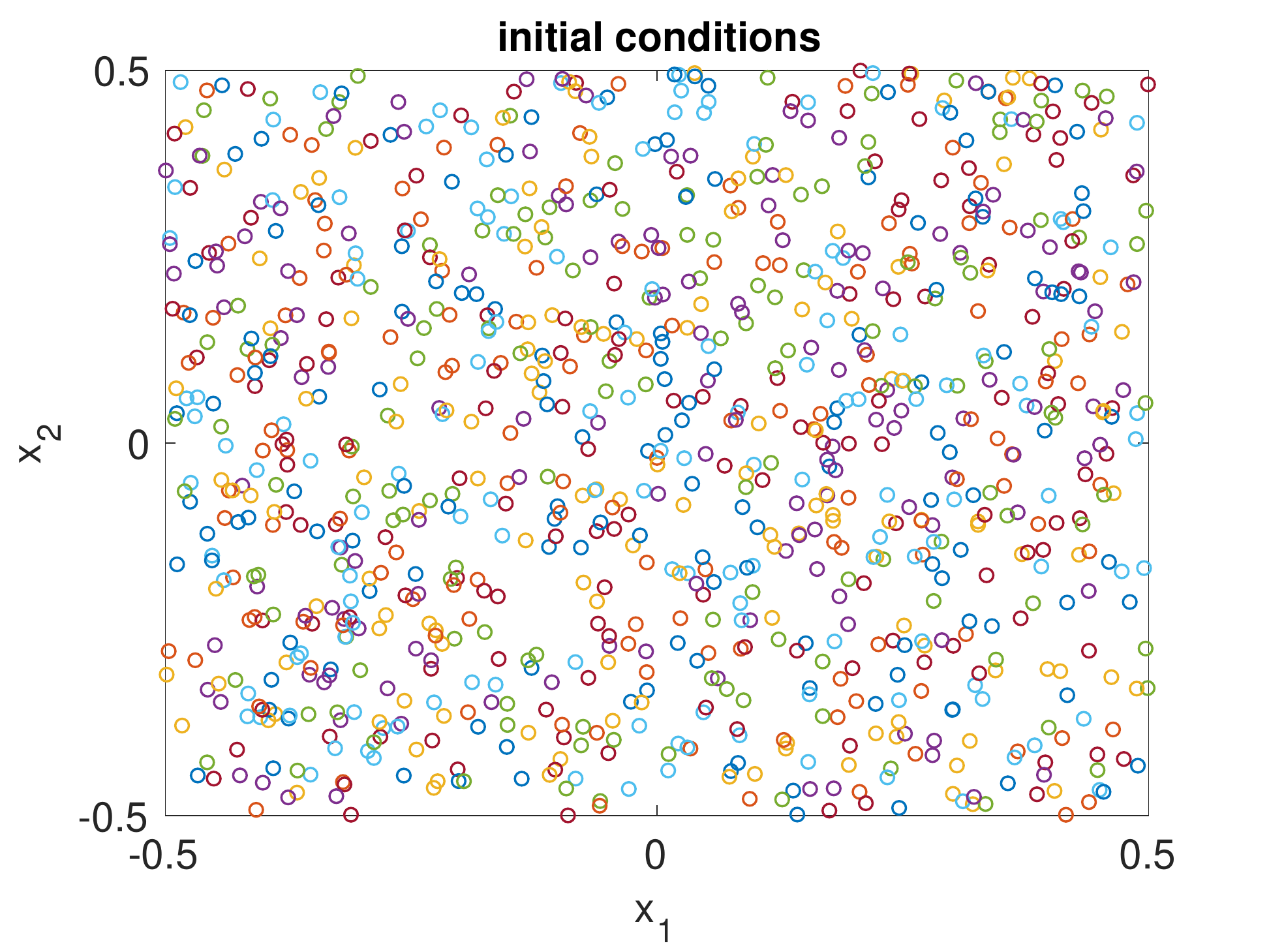}
\includegraphics[width=.325\textwidth]{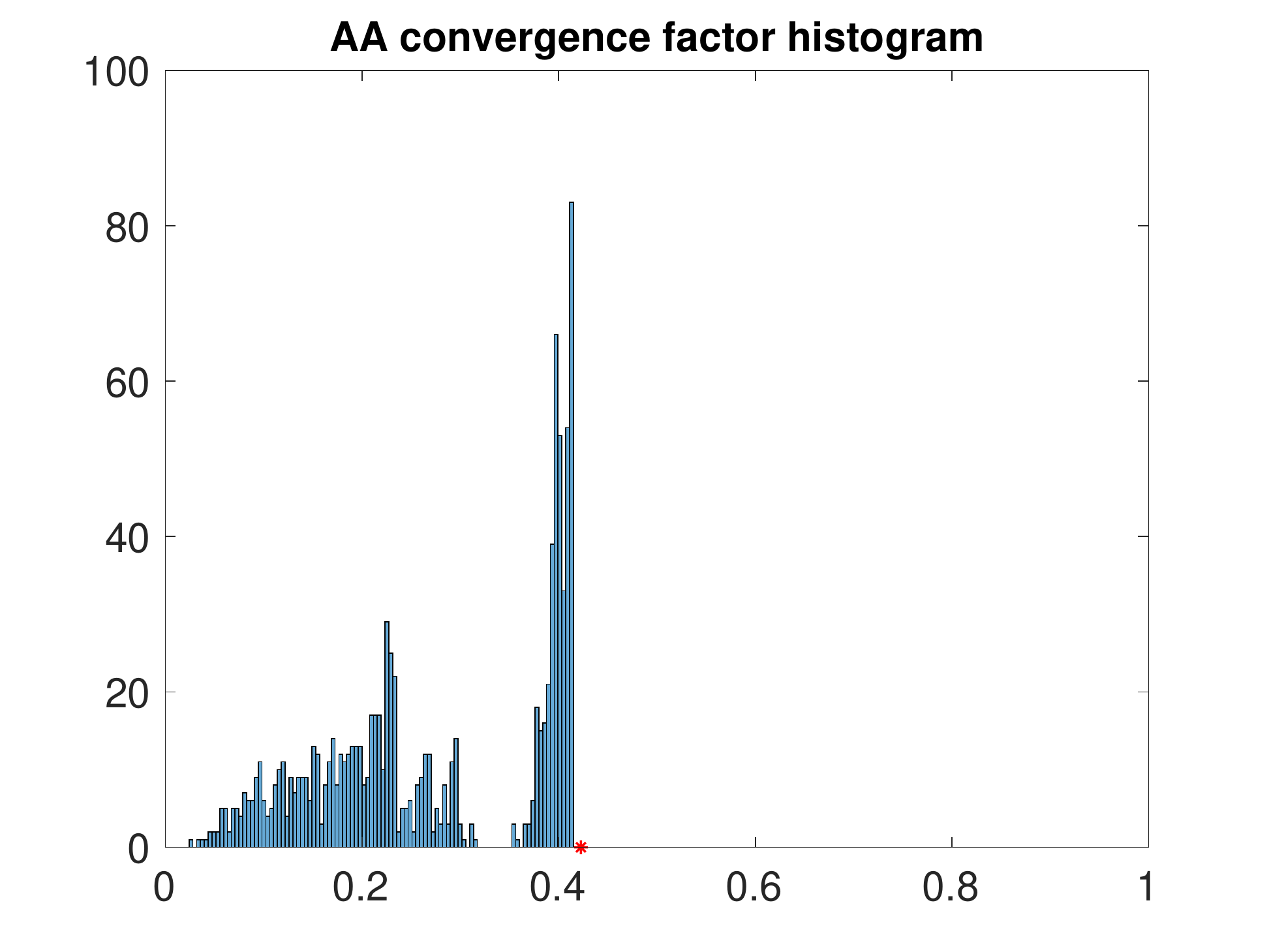}
\includegraphics[width=.325\textwidth]{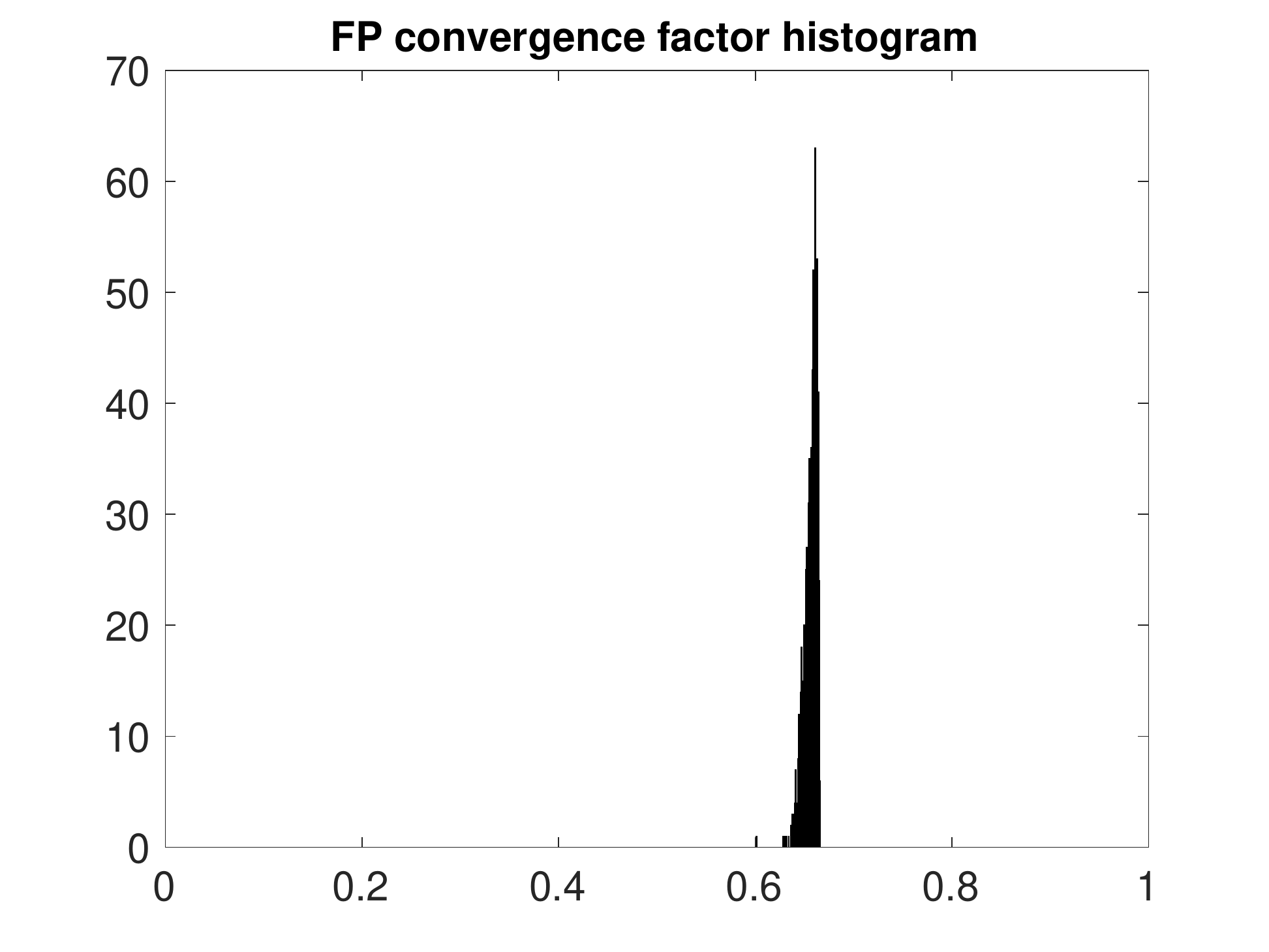}
\caption{Convergence behavior for \cref{prob:linear2x2} with 1,000 random initial guesses. The green and red $\sigma_k$ curves in the top left panel correspond to the random initial conditions $x_0$ indicated in the top right panel. The middle panels show histograms for the final values of the red and green curves in the top-left panel at iteration $k=100$. The bottom panel shows $\beta_k$ sequences for the 1,000 initial conditions.} \label{fig:simple-MC}
\end{figure}

Next, \cref{fig:simple-MC} shows further numerical results for the simple \cref{prob:linear2x2} of \cref{eq:q-linear-2x2-simple}
that identify additional convergence properties of AA(1) which will be investigated in this paper. In the tests of \cref{fig:simple-MC}, we run the FP iteration \cref{eq:q-linear-2x2-simple} and its AA(1) acceleration \cref{eq:anderson-1-step} for 1,000 initial conditions $x_0$ that are chosen uniformly randomly within the square $[-0.25,0.25]^2$.
As expected, for the FP iteration \cref{eq:q-linear-2x2-simple} $\sigma_k$ converges to $\rho(M)=\lambda_1=2/3$ for all random initial guesses, corresponding to $S$ from \cref{eq:S-definition} satisfying $|S|=0$. For the AA(1) acceleration \cref{eq:anderson-1-step}, however, the numerical results indicate that the iteration sequences $\{x_k\}$ still each converge r-linearly, but the r-linear convergence factors $\rho_{\{x_k\}}$ now strongly depend on the initial condition, indicating that $|S|>0$ for AA(1) applied to the linear \cref{prob:linear2x2}.
Furthermore, the $\beta_k$ convergence curves for AA(1) depend on the initial condition, but they oscillate and do not converge.
And finally, the numerical results suggest that the accelerated iteration \cref{eq:AA-fixed-point} for \cref{prob:linear2x2}
may have an asymptotic r-linear convergence factor $\rho_{\Psi,x^*}$, see \cref{def:rho-method},
that is strictly smaller than $\rho_{q,x^*}=\rho(M)=2/3$ of iteration \cref{eq:fixed-point}, indicating an improved AA(1) asymptotic converge factor. As we explain in some more detail below, there are no known theoretical results that can quantify the asymptotic convergence improvement of AA($m$) compared to iteration \cref{eq:fixed-point}, not even for simple specific functions $q(x)$ such as the 2 $\times 2$ linear case of \cref{prob:linear2x2}. The numerical results in \cref{fig:simple-MC} are an indication that an upper bound for $\rho_{\Psi,x^*}$ that is $< \rho_{q,x^*}$ should exist for \cref{prob:linear2x2}, and further numerical results in this paper suggest the same for higher-dimensional and nonlinear iterations \cref{eq:fixed-point}. As explained below, this open question motivates the analysis of the AA($m$) fixed-point function $\Psi(\boldsymbol{z})$ in iteration \cref{eq:AA-fixed-point} that is the subject of this paper.

At this point it is useful to recall in some detail what is known about convergence of AA($m$).
While Anderson acceleration method \cref{eq:AA-iteration} dates back to 1965 \cite{anderson1965iterative} and has since been used to speed up convergence for fixed-point methods in many areas of scientific computing with often excellent results, very little was known about the convergence of AA($m$) until the 2015 paper \cite{toth2015convergence}. In this paper, Toth and Kelley proved two convergence results.
First, they showed that, for the linear case \cref{eq:fixed-point-linear} where $q(x)=Mx+b$, if $\|M\|=c<1$, $x_k$ converges at least r-linearly with r-linear convergence factor not worse than $c$, for any initial guess. This result is important in that it establishes convergence of AA($m$), but it does not provide information on AA($m$) actually accelerating the fixed-point convergence asymptotically. Also, it only covers the case where $\|M\|=c<1$, and thus excludes practically relevant cases such as when $\rho(M)<1$ with $\|M\|>1$.
Second, \cite{toth2015convergence} also considered the nonlinear case. Assuming that the AA($m$) coefficients are bounded, i.e., $\sum_{i=1}^{m}|\beta^{(k)}_i| < c_\beta \ \forall k$, that $q(x)$ is differentiable with $\|q'(x)\|\leq c<1$, and that $q'(x)$ is Lipschitz continuous,  it was shown in \cite{toth2015convergence} that $x_k$ converges at least r-linearly with r-linear convergence factor not worse than $c$, for any initial guess sufficiently close to $x^*$. This result, however, also does not show an asymptotic improvement over $c$, and the boundedness of the $\beta^{(k)}_i$ remains as a strong assumption.
As far as we are aware, no further results have been obtained that can quantify the improvement in linear asymptotic convergence factor $\rho_{\Psi,x^*}$ that is often observed for AA($m$) with finite $m$, compared to the linear asymptotic convergence factor $\rho_{q,x^*}$ of iteration \cref{eq:fixed-point} when it converges r-linearly.

In more recent work, Evans et al.\ \cite{evans2020proof} were able to quantify the per-iteration convergence improvement of AA($m$).
They showed that, to first order, the convergence gain provided by AA in step $k$ is quantified by a factor $\theta_k \le 1$ that equals the ratio of the optimal value defined in \cref{eq:Andersonbetas} to $\|r(x_k)\|_2$.
However, $\theta_k \le 1$ may oscillate, and it is not clear how $\theta_k$ may be evaluated or bounded in practice or how it may translate to an improved linear asymptotic convergence factor $\rho_{\Psi,x^*}$ for AA($m$) compared to $\rho_{q,x^*}$ as observed, for example, in the numerical results of \cref{fig:simple-MC}.

The strong dependence in \cref{fig:simple-MC} of the AA(1) r-linear convergence factors $\rho_{\{x_k\}}$ on the initial guess may, at first, seem surprising. In light of \cref{thm:Ostrowski-Theorem}, one would expect, for the accelerated iteration \cref{eq:AA-fixed-point}, that the Jacobian of $\Psi(\boldsymbol{z})$ evaluated at the fixed point $\boldsymbol{z}^*$ would determine the linear convergence factor $\rho_{\{x_k\}}$ for most initial guesses, if $\Psi'(\boldsymbol{z})$ were differentiable at $\boldsymbol{z}^*$. The strong dependence of the r-linear convergence factors $\rho_{\{x_k\}}$ on the initial guess in \cref{fig:simple-MC} suggests otherwise. To shed light on numerical observations as in \cref{fig:simple-MC}, we analyze in this paper the differentiability properties of the AA($m$) fixed-point iteration function $\Psi(\boldsymbol{z})$ in iteration \cref{eq:AA-fixed-point}. We find, indeed, that, while being Lipschitz continuous, $\Psi(\boldsymbol{z})$ is not differentiable at $\boldsymbol{z}^*$. Further analysis reveals, however, that $\Psi(\boldsymbol{z})$ is directionally differentiable at $\boldsymbol{z}^*$ in all directions, and we obtain closed-form expressions for these directional derivatives. This allows us to compute the Lipschitz constant of $\Psi(\boldsymbol{z})$ at $\boldsymbol{z}^*$, and to investigate whether this Lipschitz constant may relate to numerically observed AA($m$) convergence factors as in \cref{fig:simple-MC}.

In addition to the fixed-point analysis presented in this paper, it has to be noted that further insight in the convergence behavior revealed by numerical tests as in \cref{fig:simple-MC} may be obtained from formulating AA($m$) with finite $m$ as a Krylov subspace method and deriving further theoretical properties of AA($m$) iterations from that formulation. Such a Krylov formulation is developed for AA($m$) in a companion paper \cite{AAKrylov} to the current paper. This leads to further explanations for numerical observations as in \cref{fig:simple-MC}, including, for example, the apparent gap in the AA(1) $\sigma_k$ spectrum that can be observed in the top-left panel of \cref{fig:simple-MC}.

The remainder of this paper is organized as follows.
\Cref{sec:AA1} provides a detailed analysis of the continuity and differentiability of the AA(1) fixed-point iteration function $\Psi(\boldsymbol{z})$ in \cref{eq:AA-fixed-point}, and \Cref{sec:AAm} extends this analysis to AA($m$). We investigate the continuity of the AA coefficients $\beta^{(k)}_i$ and of the fixed-point iteration function $\Psi(\boldsymbol{z})$ at the fixed point $\boldsymbol{z}^*$, and consider Lipschitz continuity, directional differentiability and differentiability of $\Psi(\boldsymbol{z})$. Some of the longer proofs are relegated to appendices.
\Cref{sec:numerics} contains numerical results that further illustrate how our theoretical findings relate to the asymptotic convergence of AA($m$), both for linear and nonlinear iterations \cref{eq:fixed-point} and in higher dimensions than in the simple $2 \times 2$ problem of \cref{fig:simple-MC}. To place our results in a broader context, we also compare AA($m$) convergence behavior in the linear case with GMRES and restarted GMRES($m$). We conclude in \Cref{sec:disc}.



\section{Analysis of $\Psi(\boldsymbol{z})$ for AA(1)}
\label{sec:AA1}
In this paper we analyze the continuity and differentiability properties of the AA($m$) fixed-point iteration function $\Psi(\boldsymbol{z})$ in iteration \cref{eq:AA-fixed-point}. To aid the analysis of $\Psi(\boldsymbol{z})$, it will also be useful to study the continuity of the AA($m$) coefficients $\boldsymbol{\beta}^{(k)}$ given in \cref{eq:AAm-beta-form-pseudo}.
We first consider, in this section, the case where $m=1$. We present results that cover general, nonlinear iteration functions $q(x)$. In \Cref{sec:AAm} we extend these results to $m>1$.

Throughout this paper, we assume that $q(x)$ in iteration \cref{eq:fixed-point} is continuously differentiable in a neighborhood of $x^*\in\mathbb{R}^{n}$, such that the Jacobian matrix $q'(x)\in\mathbb{R}^{n \times n}$ exists and is continous.

First, for AA(1), let us redefine $\Psi(\boldsymbol{z})$ in \cref{eq:AA(1)-system} for iteration \cref{eq:AA-fixed-point} as follows:
\begin{equation}\label{eq:Psi1}
 \Psi(\boldsymbol{z}) =
  \begin{bmatrix}
  q(x) +\beta(\boldsymbol{z}) (q(x)-q(y))\\
   x
  \end{bmatrix},
\end{equation}
where
\begin{equation}\label{eq:z1}
\boldsymbol{z} =\begin{bmatrix} x\\ y\end{bmatrix} \quad \in\mathbb{R}^{2n},
\end{equation}
and $\beta_k$ in \cref{eq:AA-1-step-beta} is written as
\begin{equation}\label{eq:beta1}
\beta(\boldsymbol{z}) =
\begin{cases}
  \ds \ds \frac{-r^T(x)(r(x)-r(y))}{\|r(x)-r(y)\|^2}, & \text{if}\,\, r(x)\neq r(y), \\
   0,  & \text{if}\,\, r(x)=r(y),
\end{cases}
\end{equation}
with $r(x) =x-q(x)$.
Note that when $\boldsymbol{z}=\boldsymbol{z}^*=\ds \begin{bmatrix} x^*\\ x^*\end{bmatrix}$, $\Psi(\boldsymbol{z})=\boldsymbol{z}$.

For the linear case, we define $q(x)$ in \cref{eq:fixed-point} as
\begin{equation}\label{eq:linear-q-form}
  q(x) = Mx + b, \quad M \in\mathbb{R}^{n \times n}, \quad x, \, b \in\mathbb{R}^{n}
\end{equation}
where one seeks to solve $A\,x=b$, with
\begin{equation}
A=I-M=I-q'(x).
\end{equation}
In the linear case, we will assume that matrix $A=I-q'(x)$ in $A\,x=b$ is nonsingular.
Similarly, we will usually assume in the nonlinear case that $r'(x)=I-q'(x)$ is nonsingular. We also exclude the trivial case where $A=I$ and $M=0$, or, more generally, $q'(x)=0$.

In the linear case \cref{eq:beta1} simplifies to
\begin{equation}
\beta(\boldsymbol{z}) =
\begin{cases}
  \ds \frac{-(Ax-b)^TA(x-y)}{(x-y)^TA^TA(x-y)}, & \text{if}\,\, x\neq y, \\
   0,  & \text{if}\,\, x=y,
\end{cases}
\end{equation}
and when $x\neq y$
\begin{equation}\label{eq:Psilin}
 \Psi(\boldsymbol{z}) =
  \begin{bmatrix}
  (I-A)x+b- \ds \frac{(Ax-b)^TA(x-y)}{(x-y)^TA^TA(x-y)}(I-A)(x-y)\\
   x
  \end{bmatrix}.
\end{equation}
When $x= y$,
\begin{equation}\label{eq:form-Psi-x=y-linear}
 \Psi(\boldsymbol{z}) =
  \begin{bmatrix}
  q(x)\\
   x
  \end{bmatrix}=\begin{bmatrix}
 Mx+b\\
   x
  \end{bmatrix}.
\end{equation}

Before providing our detailed analysis of the differentiability properties of $\Psi(\boldsymbol{z})$, we summarize our results in \cref{tab:continuity-differentiability-Lip}. While the proof for some of these results is elementary, the table provides a complete overview of the differentiability properties of $\Psi(\boldsymbol{z})$ which are useful to understand the convergence behavior of AA(1) viewed as the fixed-point method \cref{eq:AA-fixed-point}.

\newsavebox{\smlmat}
\savebox{\smlmat}{$\small \boldsymbol{z} =\left[ \begin{array}{c} x \\ y \end{array}\right]$}
\begin{table}[h!]
 \caption{Continuity and differentiability properties of $\beta(\boldsymbol{z})$ and $\Psi(\boldsymbol{z})$ at~\usebox{\smlmat}
 for AA($m$) iteration \cref{eq:AA-fixed-point} with $m=1$, where $\beta(\boldsymbol{z})$
 and $\Psi(\boldsymbol{z})$ are given by \cref{eq:beta1}
 and \cref{eq:Psi1}.
 }
\centering
\begin{tabular}{|l|c|c|c|}
\hline
                                      &$r(x)\neq r(y)$        &$x=y, \ r(x) \neq 0$     & $x=y=x^*$        \\
\hline  continuity of $\beta(\boldsymbol{z})$        &$\surd$          &$\times$          & $\times$  \\
\hline  continuity of $\Psi(\boldsymbol{z})$       &$\surd$          &$\times$          & $\surd$  \\
\hline Lipschitz continuity of $\Psi(\boldsymbol{z})$          &$\surd$          &$\times$          & $\surd$  \\
\hline Gateaux-differentiability of $\Psi(\boldsymbol{z})$        &$\surd$          &$\times$          &$\surd$  \\
\hline differentiability of $\Psi(\boldsymbol{z})$          &$\surd$          &$\times$          & $\times$  \\
\hline
\end{tabular}\label{tab:continuity-differentiability-Lip}
\end{table}

%
%
\subsection{Continuity of $\beta(\boldsymbol{z})$ for AA(1)}
\begin{proposition}
   $\beta(\boldsymbol{z})$ in \cref{eq:beta1} is continuous at $\boldsymbol{z} =\begin{bmatrix} x\\ y\end{bmatrix}$ when $r(x)\neq r(y)$.
\end{proposition}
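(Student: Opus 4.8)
The plan is to show that near any point $\boldsymbol{z}_0 = \begin{bmatrix} x_0 \\ y_0 \end{bmatrix}$ with $r(x_0) \neq r(y_0)$, the piecewise definition \cref{eq:beta1} collapses to its first branch alone, which is a quotient of continuous functions with nonvanishing denominator. The only thing requiring genuine care is confirming that the second branch (where $\beta = 0$) plays no role in a neighborhood of $\boldsymbol{z}_0$; once that is settled the result follows from elementary continuity rules.

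First I would recall the standing assumption that $q$ is continuously differentiable in a neighborhood of $x^*$, so that the residual map $r(x) = x - q(x)$ is continuous. Consequently the composite map $\boldsymbol{z} = \begin{bmatrix} x \\ y \end{bmatrix} \mapsto r(x) - r(y)$ is continuous.

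The key step is to dispose of the piecewise structure. Since $r(x_0) - r(y_0) \neq 0$, continuity of $\boldsymbol{z} \mapsto r(x) - r(y)$ yields a neighborhood $U$ of $\boldsymbol{z}_0$ on which $r(x) - r(y) \neq 0$, i.e.\ $r(x) \neq r(y)$ throughout $U$. Hence on $U$ the function $\beta$ coincides with the first branch of \cref{eq:beta1}, and the case $r(x) = r(y)$ never occurs in $U$. This reduces the claim to continuity of the single rational expression. I would then invoke the standard rules: the numerator $-r^T(x)\bigl(r(x) - r(y)\bigr)$ and the denominator $\|r(x) - r(y)\|^2$ are both continuous, being sums, products, and inner products of continuous functions, and the denominator is strictly positive on $U$. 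A quotient of continuous scalar functions with nonvanishing denominator is continuous, so $\beta$ is continuous at $\boldsymbol{z}_0$.

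I do not anticipate a real obstacle here; the neighborhood argument is the one place where the piecewise definition could cause trouble, and it settles the matter cleanly. The substantive difficulties of the paper arise instead at $\boldsymbol{z}^* = \begin{bmatrix} x^* \\ x^* \end{bmatrix}$, where $r(x) = r(y)$ makes the denominator vanish; there one must confront the discontinuity of $\beta$ recorded in \cref{tab:continuity-differentiability-Lip} and the more delicate Lipschitz continuity and directional-differentiability analysis of $\Psi$.
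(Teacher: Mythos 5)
Your proposal is correct and follows essentially the same route as the paper's proof: continuity of $r$ makes the numerator and the denominator $\|r(x)-r(y)\|^2$ continuous, and the quotient is continuous where the denominator does not vanish. You are merely more explicit than the paper about the neighborhood on which the first branch of \cref{eq:beta1} applies, a detail the paper leaves implicit.
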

\begin{proof}
Since $r(x)$ is a continuous function, $r(x)-r(y)$ and $\|(r(x)-r(y))\|^2$ are continuous functions. It follows that $\beta(\boldsymbol{z})$ in \cref{eq:beta1} is continuous when $r(x)\neq r(y)$.
\end{proof}


\begin{proposition}\label{prop:betaxx}
$\beta(\boldsymbol{z})$ in \cref{eq:beta1} is not continuous at $\boldsymbol{z} =\begin{bmatrix} x\\ y\end{bmatrix}$ when $x= y$ with $r(x) \neq 0$.
\end{proposition}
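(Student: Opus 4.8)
The plan is to reduce continuity at $\boldsymbol{z}_0 := \begin{bmatrix} x \\ x\end{bmatrix}$ (with $r(x)\neq 0$) to the existence of a single approach sequence along which $\beta$ fails to return the value it takes at the limit point. Since $x=y$ forces $r(x)=r(y)$, the second branch of \cref{eq:beta1} gives $\beta(\boldsymbol{z}_0)=0$. Hence it suffices to produce points $\boldsymbol{z}_t\to\boldsymbol{z}_0$, lying in the first branch $r(x)\neq r(y)$, with $\beta(\boldsymbol{z}_t)\not\to 0$; I will in fact make $\beta(\boldsymbol{z}_t)$ unbounded.

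First I would perturb only the second block, setting $\boldsymbol{z}_t=\begin{bmatrix} x \\ x+t\,u\end{bmatrix}$ for a fixed nonzero vector $u$ and $t\to 0^+$, so that $\boldsymbol{z}_t\to\boldsymbol{z}_0$ while $r(x)$ stays fixed. Because $q$, and hence $r=\mathrm{id}-q$, is continuously differentiable, I expand $r(x+t u)=r(x)+t\,r'(x)u+o(t)$, so the increment in \cref{eq:beta1} is $r(x)-r(x+t u)=-t\,r'(x)u+o(t)$; note this is nonzero for all small $t>0$ since $r'(x)=I-q'(x)$ is nonsingular, placing $\boldsymbol{z}_t$ in the first branch. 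Substituting into \cref{eq:beta1}, the numerator $-r(x)^T\big(r(x)-r(x+t u)\big)=t\,r(x)^T r'(x)u+o(t)$ and the denominator $\|r(x)-r(x+t u)\|^2=t^2\|r'(x)u\|^2+o(t^2)$, giving $\beta(\boldsymbol{z}_t)=\dfrac{r(x)^T r'(x)u+o(1)}{t\,\|r'(x)u\|^2+o(t)}$. The key step is the choice of direction: taking $u=(r'(x))^T r(x)$, which is nonzero because $r'(x)$ is nonsingular and $r(x)\neq 0$, makes the leading numerator coefficient $r(x)^T r'(x)u=\|(r'(x))^T r(x)\|^2>0$ while keeping $r'(x)u\neq 0$. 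Therefore $\beta(\boldsymbol{z}_t)\to +\infty$ as $t\to 0^+$, contradicting continuity at $\boldsymbol{z}_0$.

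The only delicate point is the bookkeeping of the $o(t)$ and $o(t^2)$ remainders, i.e.\ verifying that the first-order terms genuinely dominate: the denominator must vanish at exact order $t^2$ (which needs $r'(x)u\neq 0$) and the numerator must stay bounded away from $0$ (which is the reason for the specific choice of $u$). Both facts follow from the standing nonsingularity assumption on $r'(x)=I-q'(x)$ together with $r(x)\neq 0$. In the linear case $q(x)=Mx+b$ the expansion is exact with $r'(x)=A$, and the identical computation yields $\beta(\boldsymbol{z}_t)=-r(x)^T A u/\big(t\,\|Au\|^2\big)$, again unbounded for $u=A^T r(x)$, so the same conclusion holds.
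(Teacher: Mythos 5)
Your proposal is correct and follows essentially the same route as the paper's proof: perturb only the second block along a radial path, Taylor-expand $r$ at $x$ using the nonsingularity of $r'(x)=I-q'(x)$, and observe that the quotient blows up like $1/t$. The only difference is cosmetic -- the paper argues that the limit is $\pm\infty$ for almost all unit directions, whereas you pin down the specific direction $u=(r'(x))^{T}r(x)$ to guarantee a strictly positive leading numerator coefficient, which is a clean (and slightly more explicit) way to finish the same argument.
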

\begin{proof}
Consider  $\boldsymbol{z}=\begin{bmatrix} x\\ y+d\end{bmatrix},$ where $x= y$ with $r(x) \neq 0$.
It is sufficient to find a path for $d$ along which $\beta(\boldsymbol{z})$ is not continuous as $d \rightarrow 0$.
Consider the case where $d=\epsilon\, e$, with $e$ a unit vector
in $\mathbb{R}^n$ and $r(x)-r(y+d(\epsilon))\neq 0$. Then
\begin{align*}
 \beta(\boldsymbol{z}(\epsilon))& = - \ds \frac{r(x)^T(r(x)-r(y+d(\epsilon)))}{\|r(x)-r(y+d(\epsilon))\|^2},\\
&=   - \ds \frac{r(x)^T(r(x)-r(y+\epsilon e))}{\|r(x)-r(y+\epsilon e)\|^2}.
\end{align*}
Since $r(y+\epsilon e)=r(y)+ r'(y)\epsilon e +Q(\epsilon e) \epsilon e$ with $\lim_{\epsilon \rightarrow 0} Q(\epsilon e)=0$,
we have
\begin{align*}
\lim_{\epsilon \rightarrow 0} \beta(\boldsymbol{z}(\epsilon))& =
\lim_{\epsilon \rightarrow 0} \ds \frac{r(x)^T (r'(y) \epsilon e + Q(\epsilon e) \epsilon e) }{\|r'(y)\epsilon e +Q(\epsilon e) \epsilon e\|^2},
\end{align*}
where $r'(y)e +Q(\epsilon e) e\neq 0$ for sufficiently small $\epsilon$, since $r'(y)$ is nonsingular. Since $r'(y)$ is nonsingular, we have that $r(x)^T r'(y) e \neq 0$ for all unit vectors $e$, except for the unit vectors orthogonal to
$r'(y)^Tr(x)\neq 0$.
So for almost all unit vectors $e$ we have that
\begin{align*}
\beta(\boldsymbol{z}(\epsilon))& =
\ds \frac{r(x)^T (r'(y) e + Q(\epsilon e) e) }{\epsilon \ \|r'(y) e +Q(\epsilon e) e\|^2} \rightarrow \pm \infty \textrm{ as } \epsilon \rightarrow 0.
\end{align*}
Thus, $\beta(\boldsymbol{z})$ is not continuous at $\boldsymbol{z} =\begin{bmatrix} x\\ y\end{bmatrix}$ when $x= y$  with $r(x) \neq 0$.
\end{proof}
\begin{remark}
\cref{prop:betaxx} also holds in the more general case when $r(x)= r(y) \neq r(x^*)$.
\end{remark}
\begin{proposition}\label{prop:betax*}
  $\beta(\boldsymbol{z})$ in \cref{eq:beta1} is not continuous at $\boldsymbol{z} =\begin{bmatrix} x^*\\ x^*\end{bmatrix}$.
\end{proposition}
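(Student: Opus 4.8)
The plan is to exploit the fact that, in contrast to \cref{prop:betaxx}, we now have $r(x^*)=0$ (since $x^*=q(x^*)$), so that $\beta(\boldsymbol{z}^*)=0$ by the second branch of \cref{eq:beta1}. To prove discontinuity it therefore suffices to exhibit a single path $\boldsymbol{z}(\epsilon)\to\boldsymbol{z}^*$ along which $\beta(\boldsymbol{z}(\epsilon))$ does not tend to $0$.

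First I would approach $\boldsymbol{z}^*$ along $\boldsymbol{z}(\epsilon)=\begin{bmatrix} x^*+\epsilon a\\ x^*+\epsilon b\end{bmatrix}$ with fixed vectors $a,b\in\mathbb{R}^n$, $a\neq b$, and $\epsilon\to 0$. Using $r(x^*)=0$ together with the differentiability of $r$ at $x^*$, I would Taylor-expand $r(x^*+\epsilon a)=r'(x^*)\epsilon a+o(\epsilon)$ and $r(x^*+\epsilon b)=r'(x^*)\epsilon b+o(\epsilon)$, so that $r(x)-r(y)=r'(x^*)\epsilon(a-b)+o(\epsilon)$. Substituting into \cref{eq:beta1}, both numerator and denominator become $O(\epsilon^2)$, and the $\epsilon^2$ factors cancel, giving
\begin{align*}
\lim_{\epsilon\to 0}\beta(\boldsymbol{z}(\epsilon)) = \frac{-(r'(x^*)a)^T r'(x^*)(a-b)}{\|r'(x^*)(a-b)\|^2}.
\end{align*}
Here nonsingularity of $r'(x^*)$ and $a\neq b$ guarantee $r'(x^*)(a-b)\neq 0$, so the denominator is nonzero and the limit is well defined and finite.

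The final step is to choose $a,b$ so that this limit differs from $\beta(\boldsymbol{z}^*)=0$. Taking $b=0$ and any $a\neq 0$, the limit reduces to $-\|r'(x^*)a\|^2/\|r'(x^*)a\|^2=-1\neq 0$. Since $\beta(\boldsymbol{z}(\epsilon))\to -1$ while $\beta(\boldsymbol{z}^*)=0$, the function $\beta(\boldsymbol{z})$ is not continuous at $\boldsymbol{z}^*$.

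The point to be careful about --- and the qualitative difference from \cref{prop:betaxx} --- is that here $r(x^*)=0$ makes the numerator $O(\epsilon^2)$ rather than $O(\epsilon)$, so $\beta$ no longer blows up but instead converges to a finite, direction-dependent limit. The obstacle is thus not unboundedness but that different directions of approach $(a,b)$ yield different finite limits; I only need one such limit $\neq 0$ to conclude discontinuity, but I would remark that this directional dependence already anticipates the directional-derivative analysis of $\Psi(\boldsymbol{z})$ carried out later in this section.
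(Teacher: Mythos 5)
Your proposal is correct and follows essentially the same route as the paper: approach $\boldsymbol{z}^*$ along radial paths, Taylor-expand $r$ about $x^*$ using $r(x^*)=0$, and obtain the finite, direction-dependent limit $-\bigl(r'(x^*)a\bigr)^T r'(x^*)(a-b)\big/\|r'(x^*)(a-b)\|^2$. The only cosmetic difference is in how the conclusion is drawn — you compare the limit along $b=0$ (which equals $-1$) to the value $\beta(\boldsymbol{z}^*)=0$, while the paper observes that distinct choices of direction yield distinct limits; both are valid and rest on the identical computation.
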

\begin{proof}
We investigate the limiting behavior of $\beta(\boldsymbol{z})$ along radial paths approaching $\boldsymbol{z}^* =\begin{bmatrix} x^*\\ x^*\end{bmatrix}$. We set $\boldsymbol{z}(\epsilon)=\begin{bmatrix} x^*+\epsilon d_1\\ x^*+\epsilon d_2\end{bmatrix}$.
Note that $\beta(\boldsymbol{z}(\epsilon))=0$ when $d_1 = d_2$. When $d_1 \neq d_2$ and $r(x^*+\epsilon d_1)-r(x^*+\epsilon d_2)\neq 0$, we have
\begin{align*}
\beta(\boldsymbol{z}(\epsilon))& = - \ds \frac{r(x^*+\epsilon d_1)^T( r(x^*+\epsilon d_1)-r(x^*+\epsilon d_2))}{\|r(x^*+\epsilon d_1)-r(x^*+\epsilon d_2)\|^2},
\end{align*}
and, using $r(x^*+\epsilon d)=r(x^*)+ r'(x^*)\epsilon d +Q(\epsilon d) \epsilon d$ with $\lim_{\epsilon \rightarrow 0} Q(\epsilon d)=0$, we obtain
\begin{align*}
\beta(\boldsymbol{z}(\epsilon))& = - \ds \frac{(r'(x^*)\epsilon d_1 +Q(\epsilon d_1) \epsilon d_1)^T( r'(x^*)\epsilon (d_1-d_2) +Q(\epsilon d_1) \epsilon d_1 - Q(\epsilon d_2) \epsilon d_2)}{\| r'(x^*)\epsilon (d_1-d_2) +Q(\epsilon d_1) \epsilon d_1 - Q(\epsilon d_2) \epsilon d_2 \|^2}\\
&=- \ds \frac{(r'(x^*)d_1 +Q(\epsilon d_1)  d_1)^T( r'(x^*) (d_1-d_2) +Q(\epsilon d_1)  d_1 - Q(\epsilon d_2)  d_2)}{\| r'(x^*) (d_1-d_2) +Q(\epsilon d_1)  d_1 - Q(\epsilon d_2)  d_2 \|^2},
\end{align*}
so
\begin{align*}
\lim_{\epsilon \rightarrow 0} \beta(\boldsymbol{z}(\epsilon))
&=- \ds \frac{(r'(x^*)d_1)^T r'(x^*) (d_1-d_2)}{\| r'(x^*) (d_1-d_2)\|^2}.
\end{align*}
Note that, for example, $\lim_{\epsilon \rightarrow 0} \beta(\boldsymbol{z}(\epsilon))=0$ when $d_1=0$ and when $d_1=d_2$, and
$\lim_{\epsilon \rightarrow 0} \beta(\boldsymbol{z}(\epsilon))=-1$ when $d_2=0$.
Since the limit depends on the choice of $d_1$ and $d_2$, $\beta(\boldsymbol{z})$ is not continuous at $\boldsymbol{z} =\begin{bmatrix} x^*\\ x^*\end{bmatrix}$.
\end{proof}

\begin{remark}
It is interesting to note the difference in the limiting behavior of $\beta(\boldsymbol{z})$ along radial paths in the proofs of \cref{prop:betaxx} and \cref{prop:betax*}. The proof of \cref{prop:betax*} shows that, when approaching $\boldsymbol{z} =\begin{bmatrix} x^*\\ x^*\end{bmatrix}$ along radial paths, $\lim_{\epsilon \rightarrow 0} \beta(\boldsymbol{z}(\epsilon))$ is finite for any fixed $d_1$ and $d_2$. This is in contrast to the limiting behavior of $\beta(\boldsymbol{z})$ when approaching $\boldsymbol{z} =\begin{bmatrix} x\\ x\end{bmatrix}$ along radial paths with $x \neq x^*$ in \cref{prop:betaxx}, where $\beta(\boldsymbol{z}(\epsilon))$ grows without bound. The AA($m$) convergence proof in \cite{toth2015convergence} relies on the unproven assumption that $|\beta(\boldsymbol{z}_k)|$ is bounded above as $x_k \rightarrow x^*$.
\end{remark}


%
%
\subsection{Continuity and differentiability of $\Psi(\boldsymbol{z})$ for AA(1)}
In this section, we discuss the continuity and differentiability of $\Psi(\boldsymbol{z})$ at $\boldsymbol{z} =\begin{bmatrix} x\\ y\end{bmatrix}$. We consider the three cases of \cref{tab:continuity-differentiability-Lip}:   $r(x)\neq r(y)$, $x = y$ with $r(x)\neq 0$, and $x=y=x^*$.

\begin{proposition}\label{prop:Psi-x-not-y}
$\Psi(\boldsymbol{z})$ in \cref{eq:Psi1} is continuous and differentiable at $\boldsymbol{z} =\begin{bmatrix} x\\ y\end{bmatrix}$ when $r(x)\neq r(y)$.
\end{proposition}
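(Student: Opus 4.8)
The plan is to prove the stronger statement that $\Psi(\boldsymbol{z})$ is in fact continuously differentiable ($C^1$) on a whole neighborhood of any point $\boldsymbol{z} = \left[\begin{smallmatrix} x \\ y \end{smallmatrix}\right]$ satisfying $r(x)\neq r(y)$; both continuity and differentiability at $\boldsymbol{z}$ then follow at once. The only substantive input is the standing assumption that $q$ is continuously differentiable in a neighborhood of $x^*$, so that $r(\cdot)=\mathrm{id}-q(\cdot)$ is $C^1$ there as well. Everything else is an exercise in combining $C^1$ functions via the sum, product, chain, and quotient rules.

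The first step is to handle $\beta(\boldsymbol{z})$. Viewed as a function of $\boldsymbol{z}=\left[\begin{smallmatrix} x \\ y \end{smallmatrix}\right]$, both $x\mapsto r(x)$ and $y\mapsto r(y)$ are $C^1$, hence the numerator $N(\boldsymbol{z})=-r(x)^T(r(x)-r(y))$ and the denominator $D(\boldsymbol{z})=\|r(x)-r(y)\|^2$ in \cref{eq:beta1} are $C^1$ scalar functions of $\boldsymbol{z}$, being polynomial combinations of the entries of $r(x)$ and $r(y)$. The key observation is that the condition $r(x)\neq r(y)$ is \emph{open}: since $r$ is continuous, the set of $\boldsymbol{z}$ with $r(x)\neq r(y)$ is open, so $D(\boldsymbol{z})>0$ on an entire neighborhood of the given point. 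On that neighborhood $\beta(\boldsymbol{z})=N(\boldsymbol{z})/D(\boldsymbol{z})$ is given by the single (non-piecewise) formula, and the quotient rule shows it is $C^1$ there.

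It remains to assemble $\Psi$ from $C^1$ pieces. In \cref{eq:Psi1}, the top block is $q(x)+\beta(\boldsymbol{z})\,(q(x)-q(y))$: here $q(x)$ and $q(x)-q(y)$ are $C^1$ in $\boldsymbol{z}$, $\beta(\boldsymbol{z})$ is $C^1$ by the previous step, and the product of a $C^1$ scalar with a $C^1$ vector is again $C^1$; the bottom block is the linear map $\boldsymbol{z}\mapsto x$, which is trivially $C^1$. Hence $\Psi$ is $C^1$ on the neighborhood, and therefore continuous and differentiable at $\boldsymbol{z}$, as claimed. If an explicit Jacobian is desired, it can be written out by applying the product and quotient rules to the top block and recording that the derivative of the bottom block with respect to $\boldsymbol{z}$ is $[\,I \ \ 0\,]$, but no such computation is needed for the stated conclusion. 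The only point requiring any care is the openness argument guaranteeing that the denominator stays nonzero nearby, so that the smooth branch of $\beta$ is the one in force throughout the neighborhood; this is the mild obstacle, and it is resolved immediately by continuity of $r$.
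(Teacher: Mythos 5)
Your proposal is correct and takes essentially the same approach as the paper's proof, which simply notes that $q(x)$, $q(y)$, $r(x)$, and $r(y)$ are continuous and differentiable so that $\Psi(\boldsymbol{z})$ inherits these properties when $r(x)\neq r(y)$. Your explicit observation that the condition $r(x)\neq r(y)$ is open, so the single smooth branch of $\beta(\boldsymbol{z})$ with nonvanishing denominator governs an entire neighborhood, makes precise a point the paper leaves implicit.
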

\begin{proof}
Recall that
\begin{equation*}
 \Psi(\boldsymbol{z}) =
  \begin{bmatrix}
  q(x)+\beta(\boldsymbol{z})(q(x)-q(y))\\
   x
  \end{bmatrix},
\end{equation*}
where  $\beta(\boldsymbol{z}) =\ds \frac{-r^T(x)(r(x)-r(y))}{\|r(x)-r(y)\|^2}$ when $r(x)\neq r(y)$.
Since $q(x), q(y), r(x)$, and $r(y)$ are continuous and differentiable functions at $x$ and $y$,
$\Psi(\boldsymbol{z})$ is continuous and differentiable at $\boldsymbol{z} =\begin{bmatrix} x\\ y\end{bmatrix}$ when $r(x)\neq r(y)$.
\end{proof}

\begin{proposition}\label{prop:Psixx}
$\Psi(\boldsymbol{z})$ in \cref{eq:Psi1} is not continuous and not differentiable at $\boldsymbol{z} =\begin{bmatrix} x\\ y\end{bmatrix}$ when $x= y$ with $r(x) \neq 0$.
\end{proposition}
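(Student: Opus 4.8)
The plan is to reuse the radial-path construction from the proof of \cref{prop:betaxx} and track what happens to the first block of $\Psi(\boldsymbol{z})$, namely $q(x) + \beta(\boldsymbol{z})(q(x)-q(y))$. The key observation is that along a path $\boldsymbol{z}(\epsilon) = \begin{bmatrix} x \\ x+\epsilon e \end{bmatrix}$ (recall $y = x$), the scalar $\beta(\boldsymbol{z}(\epsilon))$ blows up like $1/\epsilon$, as shown in \cref{prop:betaxx}, while the factor $q(x)-q(x+\epsilon e)$ vanishes like $\epsilon$. Hence the product $\beta(\boldsymbol{z}(\epsilon))(q(x)-q(x+\epsilon e))$ is a $0\cdot\infty$ indeterminate form whose limit I expect to be finite but direction-dependent, which is exactly what will break continuity.

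First I would record the base value: when $x = y$ we have $r(x) = r(y)$, so $\beta(\boldsymbol{z}) = 0$ and $\Psi(\boldsymbol{z}) = \begin{bmatrix} q(x) \\ x \end{bmatrix}$. Next, using the first-order expansions $r(x+\epsilon e) = r(x) + r'(x)\epsilon e + Q_r(\epsilon e)\epsilon e$ and $q(x+\epsilon e) = q(x) + q'(x)\epsilon e + Q_q(\epsilon e)\epsilon e$ with $Q_r, Q_q \to 0$, together with the expression for $\beta(\boldsymbol{z}(\epsilon))$ derived in \cref{prop:betaxx}, the explicit $\epsilon$ coming from $q(x)-q(x+\epsilon e)$ cancels the $1/\epsilon$ in $\beta$, and I would obtain
\[
\lim_{\epsilon \to 0} \beta(\boldsymbol{z}(\epsilon))\big(q(x)-q(x+\epsilon e)\big) = -\frac{r(x)^T r'(x) e}{\|r'(x) e\|^2}\, q'(x) e ,
\]
so the first block of $\Psi(\boldsymbol{z}(\epsilon))$ converges to $q(x) - \frac{r(x)^T r'(x) e}{\|r'(x) e\|^2}\, q'(x) e$.

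The remaining and decisive step is to exhibit a direction $e$ for which this limit differs from the base value $q(x)$, i.e., for which the extra term $\frac{r(x)^T r'(x) e}{\|r'(x) e\|^2}\, q'(x) e$ is nonzero. This requires $r(x)^T r'(x) e \neq 0$ and $q'(x) e \neq 0$ simultaneously. Here I would use that $\{e : r(x)^T r'(x) e = 0\}$ is a hyperplane (since $r'(x)^T r(x) \neq 0$, because $r'(x)$ is nonsingular and $r(x) \neq 0$) and that $\ker q'(x)$ is a proper subspace (since the case $q'(x) = 0$ is excluded); because $\mathbb{R}^n$ cannot be the union of two proper subspaces, such a direction $e$ exists. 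This shows $\Psi$ is not continuous at $\boldsymbol{z}$, and since differentiability implies continuity, $\Psi$ is not differentiable there either.

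I expect the main obstacle to be the careful bookkeeping of the indeterminate product, confirming that the $1/\epsilon$ singularity of $\beta$ and the $O(\epsilon)$ decay of $q(x)-q(x+\epsilon e)$ cancel exactly and that the remainder terms $Q_r, Q_q$ drop out in the limit, together with the small structural argument that a real vector space is never a union of two proper subspaces, which is what guarantees a direction along which the discontinuity is genuine.
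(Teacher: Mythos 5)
Your proposal is correct and follows essentially the same route as the paper: approach $\boldsymbol{z}$ along the path $\bigl[x^T,\,(x+\epsilon e)^T\bigr]^T$, use the Taylor expansions of $r$ and $q$ to cancel the $1/\epsilon$ blow-up of $\beta$ against the $O(\epsilon)$ decay of $q(x)-q(x+\epsilon e)$, and obtain the direction-dependent finite limit $q(x)-\frac{r(x)^T r'(x)e}{\|r'(x)e\|^2}\,q'(x)e$ (the paper records the same limit up to an inconsequential sign slip). Your closing argument that some unit vector $e$ avoids both the hyperplane $r(x)^T r'(x)e=0$ and $\ker q'(x)$ is in fact slightly more careful than the paper, which simply asserts that the limit depends on $e$.
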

\begin{proof}
Let $\boldsymbol{z}(\epsilon)=\begin{bmatrix} x\\ y+\epsilon e\end{bmatrix},$ where $x= y$ with $r(x) \neq 0$, and $e$ is a unit vector in $\mathbb{R}^n$.
From the proof of \cref{prop:betaxx}, we have
\begin{align*}
\beta(\boldsymbol{z}(\epsilon))& =
\ds \frac{r(x)^T (r'(y) e + Q(\epsilon e) e) }{\epsilon \ \|r'(y) e +Q(\epsilon e) e\|^2}.
\end{align*}
Plugging this into \cref{eq:Psi1} and using $q(y+\epsilon e)=q(y) + q'(y) \epsilon e + P(\epsilon e) \epsilon e$ with $\lim_{\epsilon \rightarrow 0} P(\epsilon e)=0$, we obtain
\begin{align*}
 \Psi(\boldsymbol{z}(\epsilon)) &=
  \begin{bmatrix}
  q(x)+\ds \frac{r(x)^T (r'(y) e + Q(\epsilon e) e) }{\epsilon \ \|r'(y) e +Q(\epsilon e) e\|^2}
    (q(x)-q(y)-  q'(y) \epsilon e - P(\epsilon e) \epsilon e)\\
   x
  \end{bmatrix},\\
  &=
  \begin{bmatrix}
  q(x)+\ds \frac{r(x)^T (r'(x) e + Q(\epsilon e) e) }{ \ \|r'(x) e +Q(\epsilon e) e\|^2}
    (q'(x) e - P(\epsilon e) e)\\
   x
  \end{bmatrix},
\end{align*}
and
\begin{align*}
\lim_{\epsilon \rightarrow 0} \Psi(\boldsymbol{z}(\epsilon)) &=
  \begin{bmatrix}
  q(x)+\ds \frac{r(x)^T r'(x) e}{ \ \|r'(x) e\|^2}
    q'(x) e\\
   x
  \end{bmatrix}.
\end{align*}
While the limit is finite for any $e$, it depends on the choice of $e$, so $\Psi(\boldsymbol{z})$ is not continuous at $\boldsymbol{z}=\begin{bmatrix}x \\ x\end{bmatrix}$ where $r(x)\neq 0$. It follows that  $\Psi(\boldsymbol{z})$ is not differentiable at $\boldsymbol{z}=\begin{bmatrix}x \\ x\end{bmatrix}$.
\end{proof}

The following proposition establishes the Lipschitz continuity of $\Psi(\boldsymbol{z})$ at $\boldsymbol{z}^* =\begin{bmatrix} x^*\\ x^*\end{bmatrix}$. The differentiability of $\Psi(\boldsymbol{z})$ at $\boldsymbol{z}^*$ is investigated in the next subsection.

\begin{proposition}\label{prop:Psix*}
$\Psi(\boldsymbol{z})$ in \cref{eq:Psi1} is Lipschitz continuous at $\boldsymbol{z} =\begin{bmatrix} x\\ y\end{bmatrix}$ when $x= y= x^*$ with global Lipschitz constant $L=( \|A^{-1}\| \|A\| + 1 ) \|I-A\|+1$ in the linear case, and with local Lipschitz constant $L=3+(4+4/c_r)\|r'(x^*)\|$ in the nonlinear case, where $c_r$ is a problem-dependent constant.
\end{proposition}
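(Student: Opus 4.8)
The plan is to establish the bound $\|\Psi(\boldsymbol{z}) - \boldsymbol{z}^*\| \le L\,\|\boldsymbol{z} - \boldsymbol{z}^*\|$, using that $\Psi(\boldsymbol{z}^*) = \boldsymbol{z}^*$; this holds for all $\boldsymbol{z}$ in the linear case (global constant) and for $\boldsymbol{z}$ in a neighborhood of $\boldsymbol{z}^*$ in the nonlinear case (local constant). I would treat the two block components of $\Psi(\boldsymbol{z}) - \boldsymbol{z}^*$ in \cref{eq:Psi1} separately and recombine them via $\|(a,b)\| \le \|a\| + \|b\|$ together with $\|x-x^*\|,\,\|y-x^*\| \le \|\boldsymbol{z}-\boldsymbol{z}^*\|$. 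The bottom block is simply $x - x^*$ and contributes the trailing $+1$ in $L$. All the difficulty sits in the top block $q(x) - x^* + \beta(\boldsymbol{z})(q(x)-q(y))$, and specifically in the term $\beta(\boldsymbol{z})(q(x)-q(y))$: by \cref{prop:betax*}, $\beta(\boldsymbol{z})$ is discontinuous at $\boldsymbol{z}^*$ and is in fact unbounded on every neighborhood of $\boldsymbol{z}^*$. The observation I would make central to the proof is that $\beta(\boldsymbol{z})(q(x)-q(y))$ nevertheless stays controlled, because the factor $q(x)-q(y)$ vanishes at the same rate as $\|x-y\|$ and exactly cancels the $1/\|x-y\|$ growth of $\beta(\boldsymbol{z})$.

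For the linear case I would substitute the affine identities $r(x) = A(x-x^*)$, $r(x)-r(y) = A(x-y)$, and $q(x)-q(y) = (I-A)(x-y)$. Cauchy--Schwarz applied to the numerator of $\beta(\boldsymbol{z})$ in \cref{eq:beta1} gives $|\beta(\boldsymbol{z})| \le \|A(x-x^*)\|/\|A(x-y)\|$, and the elementary bound $\|x-y\| \le \|A^{-1}\|\,\|A(x-y)\|$ lets me cancel the $\|x-y\|$ factor in $\|\beta(\boldsymbol{z})(I-A)(x-y)\|$, yielding $\|\beta(\boldsymbol{z})(q(x)-q(y))\| \le \|A^{-1}\|\,\|A\|\,\|I-A\|\,\|x-x^*\|$. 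Adding $\|q(x)-x^*\| = \|(I-A)(x-x^*)\| \le \|I-A\|\,\|x-x^*\|$ from the top block and $\|x-x^*\|$ from the bottom block reproduces exactly $L = (\|A^{-1}\|\,\|A\| + 1)\|I-A\| + 1$. The degenerate case $x=y$, where $\beta(\boldsymbol{z})=0$, is immediate and consistent with this $L$.

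For the nonlinear case I would run the same template with the affine identities replaced by local estimates. The structural input I need is a local lower bound $\|r(x)-r(y)\| \ge c_r\,\|x-y\|$ valid near $x^*$; this is where the standing assumption that $r'(x^*) = I - q'(x^*)$ is nonsingular enters, via the inverse function theorem and the resulting local bi-Lipschitz property of $r$, and it defines the problem-dependent constant $c_r$. Writing $q(x)-q(y) = (x-y) - (r(x)-r(y))$ and $q(x)-q(x^*) = (x-x^*) - r(x)$ (so as to keep everything linear in $\|r'(x^*)\|$ rather than quadratic), combining Cauchy--Schwarz $|\beta(\boldsymbol{z})| \le \|r(x)\|/\|r(x)-r(y)\|$ with the lower bound on $\|r(x)-r(y)\|$ and with local Lipschitz upper bounds on $r$ comparable to $\|r'(x^*)\|$, the factors of $\|x-y\|$ again cancel and the top block reduces to a bound proportional to $\|r(x)\|$, which is controlled by $\|r(x)\| = \|r(x)-r(x^*)\| \le L_r\,\|x-x^*\|$. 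Collecting these contributions, with the neighborhood-dependent slack in the local Lipschitz constants, yields the stated $L = 3 + (4 + 4/c_r)\|r'(x^*)\|$.

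The main obstacle is conceptual rather than algebraic: since $\beta(\boldsymbol{z})$ is genuinely unbounded at $\boldsymbol{z}^*$, one cannot bound the top block by estimating $\beta(\boldsymbol{z})$ and $q(x)-q(y)$ in isolation. The proof hinges on never separating these factors until after exploiting the cancellation that keeps $|\beta(\boldsymbol{z})|\,\|x-y\|$ bounded by a multiple of $\|x-x^*\|$, which in turn rests entirely on the coercivity estimate $\|r(x)-r(y)\| \ge c_r\,\|x-y\|$. Care is also needed to treat the degenerate directions $x=y$ (and, in the nonlinear case, $r(x)=r(y)$), where $\beta(\boldsymbol{z})$ is defined to be $0$; these only make the top block smaller and are covered by the same $L$.
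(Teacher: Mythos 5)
Your proposal is correct and follows essentially the same route as the paper's proof in the appendix: both exploit $\Psi(\boldsymbol{z}^*)=\boldsymbol{z}^*$, treat the blocks and the degenerate case $x=y$ separately, and control $\beta(\boldsymbol{z})(q(x)-q(y))$ only as a product, cancelling the $1/\|r(x)-r(y)\|$ growth of $\beta$ against the $\|x-y\|$ decay of $q(x)-q(y)$ via $\|x-y\|\le\|A^{-1}\|\,\|A(x-y)\|$ in the linear case and a coercivity bound $\|r(x)-r(y)\|\gtrsim c_r\|x-y\|$ from the nonsingularity of $r'$ near $x^*$ in the nonlinear case. The only cosmetic differences are that the paper bounds the $\beta$-term by writing it as a rank-one orthogonal projection of norm $1$ rather than by Cauchy--Schwarz, and obtains $c_r$ from a lower bound on $\lambda_{\min}(r'(x)^Tr'(x))$ plus a Taylor expansion rather than from the inverse function theorem; the resulting constants are identical.
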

\begin{proof}
See \cref{app:proof-psi-lipsch}.

\end{proof}
\subsection{Differentiability of $\Psi(\boldsymbol{z})$ at $\boldsymbol{z}^*$ for AA(1)}
In this subsection we investigate the differentiability of $\Psi(\boldsymbol{z})$ at $\boldsymbol{z}^*$.
We first consider the directional derivatives of $\Psi(\boldsymbol{z})$ at $\boldsymbol{z}^*$.
\begin{definition}[Directional Derivative]
  Let $F: U\subset \mathbb{R}^n \longrightarrow \mathbb{R}^m$ be a function on the open
set $U$. We call $\mathfrak{D}F(x, d)$ defined by
\begin{equation}\label{def-direction-diff}
 \mathfrak{ D}F(x,d) = \lim_{h\downarrow 0}\ds \frac{F(x+hd)-F(x)}{h}
\end{equation}
the directional derivative of $F$ at $x$ in direction $d$ if the limit exists. We say $F(x)$ is Gateaux differentiable in $x$ if the directional derivative of $F$ exists in $x$ for all directions.
\end{definition}
Note that, if $F$ is differentiable at $x$ with Jacobian $F'(x)$, then $\mathfrak{ D}F(x,d)=F'(x)d$.

\begin{theorem}\label{thm:x=y-directional-dif}
Consider $\Psi(\boldsymbol{z})$ at $\boldsymbol{z}^*=\begin{bmatrix} x^*\\ x^*\end{bmatrix}$ and direction $\boldsymbol{d}=\begin{bmatrix} d_1\\ d_2\end{bmatrix}$.
Let  $M=q'(x^*)$ and $A= I-q'(x^*)$.
Then the directional derivative of $\Psi(\boldsymbol{z})$ at $\boldsymbol{z}^*$ in direction $\boldsymbol{d}$ is given by
\begin{equation}\label{AA(1)-direction-diff}
  \mathfrak{ D} \Psi(\boldsymbol{z}^*,\boldsymbol{ d})
  = \begin{bmatrix}
  (1+\widehat{\beta}(\boldsymbol{d})) M & -\widehat{\beta}(\boldsymbol{d}) M\\
  I  & 0
  \end{bmatrix}\boldsymbol{d},
\end{equation}
where
\begin{equation*}
\widehat{\beta}(\boldsymbol{d}) =
\begin{cases}
 - \ds \frac{d_1^TA^TA(d_1-d_2)}{(d_1-d_2)^TA^TA(d_1-d_2)}, & \text{if}\,\, d_1\neq d_2, \\
  0,  & \text{if}\,\, d_1=d_2.
\end{cases}
\end{equation*}
\end{theorem}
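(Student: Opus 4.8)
The plan is to evaluate the one-sided limit in the definition \cref{def-direction-diff} of the directional derivative directly, using the facts that $\Psi(\boldsymbol{z}^*)=\boldsymbol{z}^*$, that $q(x^*)=x^*$, and that $r(x^*)=0$. Writing $\boldsymbol{z}^*+h\boldsymbol{d}=\begin{bmatrix} x^*+hd_1\\ x^*+hd_2\end{bmatrix}$, the second block row of $\Psi$ in \cref{eq:Psi1} is simply $x=x^*+hd_1$, so $(x-x^*)/h=d_1$ for every $h>0$. This immediately produces the bottom block row $\begin{bmatrix} I & 0\end{bmatrix}\boldsymbol{d}$ of the claimed expression \cref{AA(1)-direction-diff}, and all remaining work concerns the top block.

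For the top block I would first Taylor-expand $q$ and $r$ to first order about $x^*$. Since $q$ is continuously differentiable and $r(x^*)=0$, one has $q(x^*+hd_i)=x^*+Mhd_i+o(h)$ and $r(x^*+hd_i)=Ahd_i+o(h)$, where $M=q'(x^*)$ and $A=r'(x^*)=I-M$. Consequently $(q(x)-x^*)/h\to Md_1$ and $(q(x)-q(y))/h\to M(d_1-d_2)$. The crucial ingredient is the limit of $\beta(\boldsymbol{z}^*+h\boldsymbol{d})$, which is exactly the radial-path computation already carried out in the proof of \cref{prop:betax*}: substituting the expansions above into \cref{eq:beta1} renders both numerator and denominator $O(h^2)$, the factor $h^2$ cancels, and letting $h\downarrow 0$ gives $\beta(\boldsymbol{z}^*+h\boldsymbol{d})\to -\ds\frac{d_1^TA^TA(d_1-d_2)}{(d_1-d_2)^TA^TA(d_1-d_2)}=\widehat{\beta}(\boldsymbol{d})$ when $d_1\neq d_2$, while $x=y$ forces $\beta\equiv 0=\widehat{\beta}(\boldsymbol{d})$ when $d_1=d_2$; nonsingularity of $A$ guarantees the denominator does not vanish for $d_1\neq d_2$. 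Combining these three limits, the top block of $(\Psi(\boldsymbol{z}^*+h\boldsymbol{d})-\boldsymbol{z}^*)/h$ converges to $Md_1+\widehat{\beta}(\boldsymbol{d})M(d_1-d_2)=(1+\widehat{\beta}(\boldsymbol{d}))Md_1-\widehat{\beta}(\boldsymbol{d})Md_2$, which assembles into the asserted matrix.

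The only genuinely delicate point, and hence the main obstacle, is the product term $\beta(\boldsymbol{z}^*+h\boldsymbol{d})\cdot(q(x)-q(y))/h$. In contrast to the generic-point case of \cref{prop:betaxx}, where $\beta$ blows up, here $\beta$ tends to the \emph{finite} value $\widehat{\beta}(\boldsymbol{d})$ while $(q(x)-q(y))/h\to M(d_1-d_2)$, so the product converges to $\widehat{\beta}(\boldsymbol{d})M(d_1-d_2)$ by continuity of multiplication. I would take care to verify that every $o(h)$ remainder indeed vanishes after division by $h$, which holds precisely because $\beta$ stays bounded along each ray. Finally, it is worth emphasizing that the resulting directional derivative is \emph{not} linear in $\boldsymbol{d}$, since $\widehat{\beta}(\boldsymbol{d})$ is a ratio of quadratic forms in $\boldsymbol{d}$; this nonlinearity is exactly what will later obstruct (Fr\'echet) differentiability of $\Psi$ at $\boldsymbol{z}^*$, consistent with \cref{tab:continuity-differentiability-Lip}.
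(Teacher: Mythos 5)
Your proposal is correct and follows essentially the same route as the paper's proof in \cref{app:proof-directional}: both treat the cases $d_1=d_2$ and $d_1\neq d_2$ separately, Taylor-expand $q$ and $r$ about $x^*$ using $r(x^*)=0$, and observe that the $h^2$ factors cancel in the $\beta$ quotient so that $\beta(\boldsymbol{z}^*+h\boldsymbol{d})\to\widehat{\beta}(\boldsymbol{d})$, yielding the limit $Md_1+\widehat{\beta}(\boldsymbol{d})M(d_1-d_2)$ for the top block. The only cosmetic difference is that you factor the correction term as $\beta$ times the difference quotient of $q$ and pass to the limit in each factor, whereas the paper evaluates the combined quotient $A_2/h$ in one step; the content is identical.
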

\begin{proof}
See \cref{app:proof-directional}.
\end{proof}
\begin{remark}\label{rmk-Psi-diff-n=1}
When $n=1$ and $d_1\neq d_2$, the result in \cref{eq:DPsi} simplifies considerably: since all quantities are scalar, $\widehat{\beta}(\boldsymbol{d})=\ds -\frac{d_1}{d_1-d_2}$ and \cref{eq:DPsi} can be rewritten as:
\begin{align*}
 \mathfrak{ D} \Psi(\boldsymbol{z}^*,\boldsymbol{ d})&=
  \begin{bmatrix}
  (1+\widehat{\beta}(\boldsymbol{d})) (1-a) & -\widehat{\beta}(\boldsymbol{d}) (1-a) \\
  1  & 0
  \end{bmatrix}\boldsymbol{d}\\
  &=
   \begin{bmatrix}
  \frac{-d_2}{d_1-d_2} (1-a) & \frac{d_1}{d_1-d_2} (1-a) \\
  1  & 0
  \end{bmatrix}\boldsymbol{d},\\
  & =\begin{bmatrix}
  0\\
  d_1
  \end{bmatrix},\\
   &  =\begin{bmatrix}
  0&0 \\
  1  & 0
  \end{bmatrix}\boldsymbol{d}.
\end{align*}
However, when $n=1$ and $d_1= d_2$, we get
\begin{align*}
 \mathfrak{ D} \Psi(\boldsymbol{z}^*,\boldsymbol{ d})&=
\begin{bmatrix}
  (1-a) &0 \\
  1  & 0
  \end{bmatrix}\boldsymbol{d}.
\end{align*}
This shows that, when $n=1$, $\Psi(\boldsymbol{z})$ is not differentiable at $\boldsymbol{z}^*$.
When $n>1$, $\Psi(\boldsymbol{z})$ is also not differentiable at $\boldsymbol{z}^*$, because
the matrix in \cref{eq:DPsi} depends on $\boldsymbol{d}$ and
$\mathfrak{D} \Psi(\boldsymbol{z}^*,\boldsymbol{d})$ cannot be written as $\Psi'(\boldsymbol{z}^*)\boldsymbol{d}$.
\end{remark}

Finally, it is interesting to consider the differentiability results of \cref{tab:continuity-differentiability-Lip} for AA(1) specifically for the scalar case, $n=1$. This is considered in \cref{app:scalar}.

\section{Analysis of $\Psi(\boldsymbol{z})$ for AA(m)}
\label{sec:AAm}
In this section, we extend the properties of AA(1) in \cref{tab:continuity-differentiability-Lip} to AA($m$).

First, let us extend $\Psi(\boldsymbol{z})$ in \cref{eq:AA(1)-system} for iteration \cref{eq:AA-fixed-point} to AA($m$) as follows:
\begin{equation}\label{eq:Psi-AAm}
 \Psi(\boldsymbol{z}) =
  \begin{bmatrix}
  q(z_{m+1}) + \sum_{j=1}^{m}\beta_{j} (q(z_{m+1})-q(z_{m+1-j}))\\
   z_{m+1}\\
   z_m \\
   \vdots \\
   z_2
  \end{bmatrix}=
   \begin{bmatrix}
  q(z_{m+1}) + Q(\boldsymbol{z})\boldsymbol{\beta}(\boldsymbol{z})\\
   z_{m+1}\\
   z_m\\
   \vdots \\
   z_2
  \end{bmatrix},
\end{equation}
where
\begin{equation}\label{eq:def-Q-general}
Q(\boldsymbol{z}) = \begin{bmatrix} q(z_{m+1})-q(z_m) &  q(z_{m+1})-q(z_{m-1}) & \ldots & q(z_{m+1})-q(z_{1}) \end{bmatrix}
\end{equation}
with
$
 \boldsymbol{z} =\begin{bmatrix} z_{m+1}^T & z_m^T  & \ldots & z_{1}^T\end{bmatrix}^T  \in\mathbb{R}^{n(m+1)},
$
and $\boldsymbol{\beta}^{(k)}$ in  \cref{eq:AAm-beta-form-pseudo} is written as
\begin{equation}\label{eq:beta-AAm}
\boldsymbol{\beta}(\boldsymbol{z}) =\begin{bmatrix} \beta_1 \\ \vdots \\ \beta_m \end{bmatrix}=
  -R(\boldsymbol{z})^{\dag}r(z_{m+1}),
\end{equation}
where
\begin{equation}\label{eq:beta-continuous}
  R(\boldsymbol{z}) = \begin{bmatrix} r(z_{m+1})-r(z_m) &  r(z_{m+1})-r(z_{m-1}) & \ldots & r(z_{m+1})-r(z_{1}) \end{bmatrix},
\end{equation}
and  $R(\boldsymbol{z})^{\dag}$ is the pseudo-inverse of $R(\boldsymbol{z})$.

Note that when $\boldsymbol{z}=\boldsymbol{z}^*=\ds \begin{bmatrix} (x^*)^T &  (x^*)^T & \ldots & (x^*)^T \end{bmatrix}^T$, $\Psi(\boldsymbol{z})=\boldsymbol{z}$.  We can  write AA($m$) as
the fixed-point iteration $\boldsymbol{z}_{k+1} =\Psi(\boldsymbol{z}_k)$ with $\Psi(\boldsymbol{z})$ as in \cref{eq:Psi-AAm}, where $\boldsymbol{z}_k =\begin{bmatrix} x_{k+m}^T & x_{k+m-1}^T & x_{k+m-2}^T & \ldots & x_{k}^T\end{bmatrix}^T$.

For convenience, we define the following operator:
\begin{equation}\label{eq:def-D-operator}
  D(\boldsymbol{z})=\begin{bmatrix} z_{m+1}-z_m &  z_{m+1}-z_{m-1} & \cdots & z_{m+1}-z_{1} \end{bmatrix}\quad \in\mathbb{R}^{n\times m}.
\end{equation}
For simplicity, we will sometimes denote $D(\boldsymbol{z}), R(\boldsymbol{z}), Q(\boldsymbol{z})$ and $\boldsymbol{\beta}(\boldsymbol{z})$ by $D,R,Q$ and $\boldsymbol{\beta}$.

 Before providing our detailed analysis of the differentiability properties of $\Psi(\boldsymbol{z})$, we summarize our results in  \cref{tab:continuity-differentiability-Lip-AAm}.  While the proof for some of these results is  elementary, the table provides a complete overview of the differentiability properties of $\Psi(\boldsymbol{z})$  which are useful to understand the convergence behavior of AA($m$) viewed as the fixed-point method \cref{eq:AA-fixed-point}.

\newsavebox{\smlmatt}
\savebox{\smlmatt}{$\small \boldsymbol{z} =\begin{bmatrix} z_{m+1}^T & z_m^T  & \ldots & z_{1}^T \end{bmatrix}^T$}
%
\begin{table}[H]
 \caption{Continuity and differentiability properties of $\boldsymbol{\beta}(\boldsymbol{z})$ and  $\Psi(\boldsymbol{z})$  at~\usebox{\smlmatt} for AA($m$) iteration
\cref{eq:AA-fixed-point}  with $m\geq 1$, where $\boldsymbol{\beta}(\boldsymbol{z})$ and  $\Psi(\boldsymbol{z})$  are given by \cref{eq:beta-AAm} and \cref{eq:Psi-AAm}. }
\centering
\begin{tabular}{|l|c|c|c|}
\hline
 &$R(\boldsymbol{z})$ has full rank            & $z_j=z \, \forall j, \ r(z) \neq 0$    & $\boldsymbol{z}=\boldsymbol{z}^*$     \\
\hline  continuity of $\boldsymbol{\beta}(\boldsymbol{z})$   &$\surd$       & $\times$      & $\times$  \\
\hline  continuity  of $\Psi(\boldsymbol{z})$        &$\surd$       & $\times$      &$\surd$\tablefootnote{We only prove this in the linear case. \label{note1}}  \\
\hline Lipschitz continuity  of $\Psi(\boldsymbol{z})$        &$\surd$      & $\times$      & $\surd$\cref{note1}  \\
\hline  Gateaux-differentiability of $\Psi(\boldsymbol{z})$   &$\surd$      & $\times$      &$\surd$\tablefootnote{We prove this for almost all directions $\boldsymbol{d}$.}  \\
\hline differentiability of $\Psi(\boldsymbol{z})$            &$\surd$      &$\times$       &$\times$  \\
\hline
\end{tabular}\label{tab:continuity-differentiability-Lip-AAm}
\end{table}
\subsection{Continuity of $\boldsymbol{\beta}(\boldsymbol{z})$ for AA($m$)}

\begin{proposition}\label{pro:beta-full-rank-continuous}
   $\boldsymbol{\beta}(\boldsymbol{z})$ in \cref{eq:beta-AAm} is continuous at $\boldsymbol{z}$ when $R(\boldsymbol{z})$ has full rank.
\end{proposition}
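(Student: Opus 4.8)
The plan is to exploit the explicit formula for the pseudo-inverse that is available under the full-rank hypothesis, together with the continuity of matrix inversion on the open set of invertible matrices. First I would observe that, since $q(x)$ is continuously differentiable, the residual $r(x)=x-q(x)$ is continuous; consequently both the matrix $R(\boldsymbol{z})$ in \cref{eq:beta-continuous} and the vector $r(z_{m+1})$ are continuous functions of $\boldsymbol{z}$, since each of their entries is a continuous combination of residuals evaluated at the continuous coordinate projections $z_j$ of $\boldsymbol{z}$.

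Next, I would use the full-rank assumption to replace the pseudo-inverse by a benign explicit expression. When $R(\boldsymbol{z})$ has full (column) rank $m$, the Gram matrix $R(\boldsymbol{z})^T R(\boldsymbol{z})\in\mathbb{R}^{m\times m}$ is symmetric positive definite, hence invertible, so the identity \cref{eq:two-ways-pseudo-inverse} reduces to $R(\boldsymbol{z})^{\dag}=\big(R(\boldsymbol{z})^T R(\boldsymbol{z})\big)^{-1}R(\boldsymbol{z})^T$. Because the set of matrices of full column rank is open (it is the preimage of $\mathbb{R}\setminus\{0\}$ under the continuous map $R\mapsto \det(R^TR)$), there is a neighborhood of $\boldsymbol{z}$ on which $R$ stays full rank and this same formula for $R^{\dag}$ remains valid. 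This is the decisive point that lets me avoid the generic discontinuity of the pseudo-inverse.

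Finally, I would assemble the continuity statement. On that neighborhood the map $\boldsymbol{z}\mapsto R(\boldsymbol{z})^T R(\boldsymbol{z})$ is continuous and stays invertible, and matrix inversion is continuous on the open set of invertible matrices (its entries are rational functions of the matrix entries, via Cramer's rule, with a nonvanishing determinant in the denominator). Hence $\big(R^T R\big)^{-1}$, and therefore $R^{\dag}=(R^T R)^{-1}R^T$, is continuous at $\boldsymbol{z}$, and $\boldsymbol{\beta}(\boldsymbol{z})=-R(\boldsymbol{z})^{\dag}r(z_{m+1})$ is continuous there as a product of continuous maps.

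The main obstacle to be careful about is precisely that the pseudo-inverse map $R\mapsto R^{\dag}$ is \emph{not} continuous as a function of $R$ in general: it jumps exactly where the rank of $R$ changes. The entire argument therefore hinges on the openness of the full-rank condition, which guarantees that the rank is locally constant and that the well-behaved formula $R^{\dag}=(R^T R)^{-1}R^T$ holds throughout a whole neighborhood of $\boldsymbol{z}$ rather than only at the single point $\boldsymbol{z}$ itself.
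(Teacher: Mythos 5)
Your proof is correct and follows essentially the same route as the paper's: continuity of $R(\boldsymbol{z})$ and $r(z_{m+1})$ from the continuity of $r$, then the full-rank hypothesis to reduce the pseudo-inverse to $(R^TR)^{-1}R^T$ and invoke continuity of matrix inversion. Your explicit remark that full column rank is an open condition — so the formula persists on a whole neighborhood and the generic discontinuity of $R\mapsto R^{\dag}$ is avoided — makes precise a point the paper's terser proof leaves implicit.
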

\begin{proof}
 Since $r(z)$ is a continuous function,  $r(z_{m+1})-r(z_j)$ for $j=1,2,\ldots,m$ and $r(z_{m+1})$ are continuous functions, which means that $R(\boldsymbol{z})$ is continuous.  If $R(\boldsymbol{z})$ has full rank,  then $R^T(\boldsymbol{z})R(\boldsymbol{z})$ is invertible and the inverse is continuous and  $\boldsymbol{\beta}(\boldsymbol{z})= -(R(\boldsymbol{z})^TR(\boldsymbol{z}))^{-1} R(\boldsymbol{z})^Tr(z_{m+1})$.  It follows that $\boldsymbol{\beta}(\boldsymbol{z})$ in \cref{eq:beta-AAm} is continuous if $R(\boldsymbol{z})$ has full rank.
\end{proof}
\begin{proposition}\label{pro:beta-zj=z}
 $\boldsymbol{\beta}(\boldsymbol{z})$ in  \cref{eq:beta-AAm} is not continuous at $\boldsymbol{z}=\begin{bmatrix} z^T & z^T & \ldots & z^T\end{bmatrix}^T$ with $r(z)\neq 0$.
\end{proposition}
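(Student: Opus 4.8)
The plan is to generalize the AA(1) argument of \cref{prop:betaxx} by exhibiting a single-block perturbation path along which $\boldsymbol{\beta}(\boldsymbol{z})$ blows up. First I would record the value of $\boldsymbol{\beta}$ at the degenerate point itself: when $z_{m+1}=z_m=\cdots=z_1=z$, every column $r(z_{m+1})-r(z_{m+1-i})$ of $R(\boldsymbol{z})$ in \cref{eq:beta-continuous} vanishes, so $R(\boldsymbol{z})=0$, and hence, by the minimum-norm convention in \cref{eq:beta-AAm}, $\boldsymbol{\beta}(\boldsymbol{z})=-R(\boldsymbol{z})^{\dag}r(z)=0$. It therefore suffices to produce one path approaching this point along which $\boldsymbol{\beta}$ does not tend to $0$.

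Next I would perturb only the last block, setting $\boldsymbol{z}(\epsilon)$ with $z_{m+1}=\cdots=z_2=z$ and $z_1=z+\epsilon e$, where $e$ is a unit vector in $\mathbb{R}^n$. With this choice only the final column of $R(\boldsymbol{z}(\epsilon))$ is nonzero: columns $1,\dots,m-1$ still read $r(z)-r(z)=0$, while the last column equals $c(\epsilon):=r(z)-r(z+\epsilon e)$. Thus $R(\boldsymbol{z}(\epsilon))=c(\epsilon)\,\hat e_m^{\,T}$ is a rank-one outer product, where $\hat e_m$ is the $m$-th standard basis vector of $\mathbb{R}^m$. Its pseudo-inverse is $R^{\dag}=\|c(\epsilon)\|^{-2}\,\hat e_m\,c(\epsilon)^T$, so that
\begin{equation*}
\boldsymbol{\beta}(\boldsymbol{z}(\epsilon)) = -R^{\dag}r(z)= -\ds \frac{c(\epsilon)^T r(z)}{\|c(\epsilon)\|^2}\,\hat e_m,
\end{equation*}
i.e.\ all entries of $\boldsymbol{\beta}$ vanish except $\beta_m$, which is exactly the AA(1) coefficient of \cref{eq:AA-1-step-beta} with $r_k=r(z)$ and $r_{k-1}=r(z+\epsilon e)$.

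This reduction lets me import the estimate already established in the proof of \cref{prop:betaxx} essentially verbatim: using $r(z+\epsilon e)=r(z)+r'(z)\epsilon e + Q(\epsilon e)\epsilon e$ with $\lim_{\epsilon\to 0}Q(\epsilon e)=0$, one obtains
\begin{equation*}
\beta_m(\epsilon) = \ds \frac{r(z)^T\big(r'(z)e + Q(\epsilon e)e\big)}{\epsilon\,\|r'(z)e + Q(\epsilon e)e\|^2}.
\end{equation*}
Since $r'(z)$ is nonsingular and $r(z)\neq 0$, the vector $r'(z)^T r(z)$ is nonzero, so $r(z)^T r'(z)e\neq 0$ for every unit vector $e$ outside the measure-zero set orthogonal to $r'(z)^T r(z)$; for all such $e$ the denominator is $O(\epsilon)$ while the numerator tends to the nonzero constant $r(z)^T r'(z)e$, whence $\beta_m(\epsilon)\to\pm\infty$. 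Because $\boldsymbol{\beta}(\boldsymbol{z}(\epsilon))$ therefore does not converge to $\boldsymbol{\beta}(\boldsymbol{z})=0$, the map $\boldsymbol{\beta}(\boldsymbol{z})$ is discontinuous at the stated point.

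The main obstacle, and the only place that genuinely differs from AA(1), is the bookkeeping for the pseudo-inverse along the degenerate path: the base point has $R=0$ (rank $0$), whereas every nearby point on the chosen path has a rank-one $R$. The key structural observation to get right is that the single-block perturbation kills all but one column, so that $R$ collapses to a clean outer product whose pseudo-inverse is elementary, rather than forcing me to analyze $\big(R^TR\big)^{\dag}R^T$ for a general degenerating $R$. Once this simplification is in hand, the remaining analysis is identical to the scalar-direction computation of \cref{prop:betaxx}, and the genericity condition on $e$ (avoiding directions orthogonal to $r'(z)^T r(z)$) is inherited unchanged.
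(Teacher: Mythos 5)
Your proposal is correct and follows essentially the same route as the paper: a single-block perturbation of the constant point that collapses $R(\boldsymbol{z})$ to a rank-one matrix with one nonzero column, an explicit pseudo-inverse of that outer product reducing $\boldsymbol{\beta}$ to the AA(1) coefficient, and then the blow-up estimate imported from \cref{prop:betaxx}. The only (immaterial) difference is that you perturb $z_1$ so the nonzero column is the last one, while the paper perturbs $z_m$ so it is the first; your explicit remark that $\boldsymbol{\beta}=0$ at the degenerate point itself is a small but welcome addition.
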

\begin{proof}
Let $\boldsymbol{z}_0=\begin{bmatrix} z^T & z^T & \ldots & z^T\end{bmatrix}^T$  with  $r(z)\neq 0$ and $\boldsymbol{d} =\begin{bmatrix} d_{m+1}^T  & d_m^T & \ldots  & d_1^T\end{bmatrix}^T$, where all $d_j$ are zero except $d_m=d=\epsilon\, e$ with $e$ a unit vector
in $\mathbb{R}^n$ and $r(z)-r(z+d)\neq 0$.   Then,
\begin{equation*}
  R(\boldsymbol{z}_0+\boldsymbol{d}) = \begin{bmatrix} r(z)-r(z+d) & 0 & \ldots & 0\end{bmatrix}.
\end{equation*}
Using  \cref{eq:beta-continuous} and \cref{eq:two-ways-pseudo-inverse}, we have
\begin{align*}
   \boldsymbol{\beta}(\boldsymbol{z}_0+\boldsymbol{d}) &=  -R(\boldsymbol{z}_0+\boldsymbol{d})^{\dag}r(z_{m+1}) ,\\
   & = -\big(R(\boldsymbol{z}_0+\boldsymbol{d})^T R(\boldsymbol{z}_0+\boldsymbol{d})\big)^{\dag} R(\boldsymbol{z}_0+\boldsymbol{d})^T r(z),\\
   & =  \begin{bmatrix} \displaystyle \frac{-(r(z)-r(z+d))^T r(z)}{(r(z)-r(z+d))^T(r(z)-r(z+d))} &  0 & \ldots & 0 \end{bmatrix}^T.
\end{align*}
According to the proof of \cref{prop:betaxx}, $\displaystyle \frac{-(r(z)-r(z+d))^T r(z)}{(r(z)-r(z+d))^T(r(z)-r(z+d))}\rightarrow \pm \infty$ as $\epsilon \rightarrow 0$.
Thus, $\boldsymbol{\beta}(\boldsymbol{z})$  is not continuous at $\boldsymbol{z}_0$ with $r(z)\neq 0$.
\end{proof}
%
%
\begin{proposition}\label{pro:beta-z=z^*}
 $\boldsymbol{\beta}(\boldsymbol{z})$ in  \cref{eq:beta-AAm} is not continuous at $\boldsymbol{z}=\boldsymbol{z}^*$.
\end{proposition}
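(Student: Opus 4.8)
The plan is to generalize the radial-path argument used in the proof of \cref{prop:betax*} from the scalar AA(1) coefficient to the vector-valued coefficient $\boldsymbol{\beta}(\boldsymbol{z})$ of AA($m$). I would approach $\boldsymbol{z}^*$ along a ray $\boldsymbol{z}(\epsilon)=\boldsymbol{z}^*+\epsilon\,\boldsymbol{d}$ with $\boldsymbol{d}=\begin{bmatrix} d_{m+1}^T & d_m^T & \cdots & d_1^T\end{bmatrix}^T$, set $A=r'(x^*)=I-q'(x^*)$ (nonsingular, by assumption), and expand each block using $r(x^*+\epsilon d_j)=r(x^*)+\epsilon A d_j + Q(\epsilon d_j)\epsilon d_j$ with $r(x^*)=0$ and $\lim_{\epsilon\to 0}Q(\epsilon d_j)=0$. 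This yields $r(z_{m+1})=\epsilon\,(A d_{m+1}+o(1))$ and, for the matrix in \cref{eq:beta-continuous}, $R(\boldsymbol{z}(\epsilon))=\epsilon\,(A\,D(\boldsymbol{d})+o(1))$, where $D(\boldsymbol{d})=\begin{bmatrix} d_{m+1}-d_m & \cdots & d_{m+1}-d_1\end{bmatrix}$ as in \cref{eq:def-D-operator}. The whole of $R$ and of $r(z_{m+1})$ thus carries a common factor $\epsilon$.

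The key algebraic step is to cancel this factor inside the pseudo-inverse of \cref{eq:beta-AAm}. Using $(\epsilon M)^{\dagger}=\epsilon^{-1}M^{\dagger}$ for $\epsilon\neq 0$, the $\epsilon$'s cancel and I get $\boldsymbol{\beta}(\boldsymbol{z}(\epsilon))=-\big(A\,D(\boldsymbol{d})+o(1)\big)^{\dagger}\big(A d_{m+1}+o(1)\big)$. The crux of the proof — and the step I expect to be the main obstacle — is that the Moore--Penrose pseudo-inverse is continuous only where the rank is locally constant, so I cannot simply pass to the limit. I would resolve this by restricting to directions $\boldsymbol{d}$ for which $A\,D(\boldsymbol{d})$ attains its maximal possible rank $\min(m,n)$; since $A$ is nonsingular this holds for generic $\boldsymbol{d}$. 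By lower semicontinuity of the rank, $A\,D(\boldsymbol{d})+o(1)$ then retains that maximal rank for all sufficiently small $\epsilon$, the rank is locally constant along the path, the pseudo-inverse is continuous there, and
\[
\lim_{\epsilon\to 0}\boldsymbol{\beta}(\boldsymbol{z}(\epsilon))=-\big(A\,D(\boldsymbol{d})\big)^{\dagger}A d_{m+1}.
\]

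It then remains to exhibit two admissible directions giving different limits, which forces discontinuity. Taking $d_{m+1}=0$ and $d_m,\dots,d_1$ chosen so that $A\,D(\boldsymbol{d})$ has maximal rank (possible generically, e.g.\ $d_{m+1-i}=e_i$ when $n\geq m$) makes the limit the zero vector, since $A d_{m+1}=0$. For a second direction I would take $d_{m+1}=e_1$, $d_m=2e_1$, and $d_{m+1-i}=e_i$ for $i=2,\dots,m$, so that $D(\boldsymbol{d})$ has first column $-e_1$ and remaining columns $e_1-e_i$, which are linearly independent; here the limit $-\big(A\,D(\boldsymbol{d})\big)^{\dagger}A e_1$ is nonzero, because $\big(A\,D(\boldsymbol{d})\big)^{T}A e_1$ has first entry $-e_1^T A^T A e_1=-\|A e_1\|^2\neq 0$ and the Gram factor is invertible. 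Since these two directions produce different limits, $\boldsymbol{\beta}(\boldsymbol{z})$ cannot be continuous at $\boldsymbol{z}^*$, which establishes the claim; this also recovers \cref{prop:betax*} as the case $m=1$.
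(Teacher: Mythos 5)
Your proof is correct, but it takes a genuinely different route from the paper's. The paper proves \cref{pro:beta-z=z^*} by choosing a perturbation in which all blocks except one equal $\epsilon d_1$ and the remaining one equals $\epsilon d_2$: then $R(\boldsymbol{z}^*+\boldsymbol{d})$ has a single nonzero column, its pseudo-inverse is explicit, and the computation collapses to exactly the scalar quantity already analyzed in the proof of \cref{prop:betax*} for AA(1), whose radial limit is $0$ or $-1$ depending on $d_1,d_2$. That reduction deliberately works along rank-one (rank-deficient) directions and so never has to confront continuity of the pseudo-inverse. You instead work along directions where $A\,D(\boldsymbol{d})$ attains the maximal rank $\min(m,n)$, cancel the common factor $\epsilon$, and invoke continuity of the Moore--Penrose pseudo-inverse under locally constant rank --- the same Stewart-type fact the paper only deploys later, in the proof of \cref{thm:proof-directional-m} --- to obtain the radial limit $-\big(A\,D(\boldsymbol{d})\big)^{\dagger}Ad_{m+1}$, and you then exhibit two admissible directions with limits $0$ and a nonzero vector. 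Both arguments are valid; yours is less elementary but more informative, since it effectively derives the limit function $\widehat{\boldsymbol{\beta}}(\boldsymbol{d})$ that reappears in \cref{thm:proof-directional-m}, while the paper's rank-one trick reuses the AA(1) computation verbatim. One small caveat to state explicitly: your second example direction, with $D(\boldsymbol{d})$ having columns $-e_1,\,e_1-e_2,\dots,e_1-e_m$, requires $n\ge m$ to have full column rank; when $n<m$ the maximal rank is $n$, so $A\,D(\boldsymbol{d})$ is surjective, $\ker\big((A\,D(\boldsymbol{d}))^{T}\big)=\{0\}$, and the limit is nonzero for any $d_{m+1}$ with $Ad_{m+1}\neq 0$ --- the argument still closes, but that case should be covered rather than left implicit.
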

\begin{proof}
Consider $\boldsymbol{d} =\begin{bmatrix} (\epsilon d_1)^T  & (\epsilon d_2)^T & (\epsilon d_1)^T &\ldots  & ( \epsilon d_1)^T\end{bmatrix}^T$ with $\epsilon\neq 0$ and $ r(x^*+\epsilon d_1)-r(x^*+\epsilon d_2)\neq 0$.   Then,
\begin{equation*}
  R(\boldsymbol{z}^*+\boldsymbol{d}) = \begin{bmatrix} r(x^*+\epsilon d_1)-r(x^*+\epsilon d_2) & 0 & \ldots & 0\end{bmatrix}.
\end{equation*}
For simplicity, let $w=r(x^*+\epsilon d_1)-r(x^*+\epsilon d_2)$. Using  \cref{eq:beta-continuous} and \cref{eq:two-ways-pseudo-inverse}, we have
\begin{align*}
   \boldsymbol{\beta}(\boldsymbol{z}^*+\boldsymbol{d}) &=  -R(\boldsymbol{z}^*+\boldsymbol{d})^{\dag}r(z_{m+1}) ,\\
   & = -\big(R(\boldsymbol{z}^*+\boldsymbol{d})^T R(\boldsymbol{z}^*+\boldsymbol{d})\big)^{\dag} R(\boldsymbol{z}^*+\boldsymbol{d})^T r(x^*+\epsilon d_1),\\
   & =  \begin{bmatrix} \displaystyle \frac{-w^T r(x^*+\epsilon d_1)}{w^Tw} &  0 & \ldots & 0 \end{bmatrix}^T.
\end{align*}
Let $\displaystyle \Delta(\epsilon)=\frac{-w^T r(x^*+\epsilon d_1)}{w^Tw}$.  According to the proof of \cref{prop:betax*},  $\lim_{\epsilon \rightarrow 0} \Delta(\epsilon)=0$ when $d_1=0$ and when $d_1=d_2$, and
$\lim_{\epsilon \rightarrow 0} \Delta(\epsilon)=-1$ when $d_2=0$.
Since the limit depends on the choice of $d_1$ and $d_2$, $\boldsymbol{\beta}(\boldsymbol{z})$ is not continuous at $\boldsymbol{z} =\boldsymbol{z}^*$.
\end{proof}
\subsection{Continuity and differentiability of $\Psi(\boldsymbol{z})$ for  AA($m$)}
In this subsection, we discuss the continuity and differentiability of $\Psi(\boldsymbol{z})$ for  AA($m$). We consider three cases at point $\boldsymbol{z}=\begin{bmatrix} z_{m+1}^T & z_m^T & \ldots & z_{1}^T\end{bmatrix}^T$:
\begin{itemize}
\item[(a)] $R(\boldsymbol{z})$ has full rank;
\item[(b)] $z_{j}=z, j=1,2,\ldots,m+1$, with $r(z) \neq 0$;
\item[(c)]  $z_{m+1}=x^*$ and $R(\boldsymbol{z})$ is rank-deficient.
\end{itemize}
\begin{proposition}\label{prop:Psi-AAm-full-rank-diff}
$\Psi(\boldsymbol{z})$ is continuous at $\boldsymbol{z}$ when $R(\boldsymbol{z})$ has full rank. Furthermore, $\Psi(\boldsymbol{z})$ is differentiable at $\boldsymbol{z}$ when $R(\boldsymbol{z})$ has full rank.
\end{proposition}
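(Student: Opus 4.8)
The plan is to mirror the AA(1) argument of \cref{prop:Psi-x-not-y}, but now $\boldsymbol{\beta}(\boldsymbol{z})$ is defined through a pseudo-inverse rather than a scalar quotient, so the key issue is to show that on the full-rank region this pseudo-inverse is given by a smooth closed-form expression. First I would dispose of continuity: by \cref{pro:beta-full-rank-continuous}, $\boldsymbol{\beta}(\boldsymbol{z})$ is continuous wherever $R(\boldsymbol{z})$ has full rank; since $q$ is continuous, the block $q(z_{m+1})$ and each column $q(z_{m+1})-q(z_j)$ of $Q(\boldsymbol{z})$ is continuous, so the product $Q(\boldsymbol{z})\boldsymbol{\beta}(\boldsymbol{z})$ is continuous. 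The shift blocks $z_{m+1},\ldots,z_2$ are trivially continuous, hence $\Psi(\boldsymbol{z})$ is continuous.

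For differentiability, the crucial observation is that the set of matrices of full (column) rank is open and $\boldsymbol{z}\mapsto R(\boldsymbol{z})$ is continuous (indeed $C^1$). Therefore, if $R(\boldsymbol{z})$ has full rank, there is a neighborhood on which $R(\boldsymbol{z})$ retains full rank, so $R(\boldsymbol{z})^TR(\boldsymbol{z})$ is invertible throughout and, by \cref{eq:two-ways-pseudo-inverse}, the pseudo-inverse in \cref{eq:beta-AAm} coincides on this neighborhood with the explicit expression
\[
\boldsymbol{\beta}(\boldsymbol{z}) = -\bigl(R(\boldsymbol{z})^TR(\boldsymbol{z})\bigr)^{-1}R(\boldsymbol{z})^T r(z_{m+1}).
\]
This replaces the (generally discontinuous) operator $R^{\dagger}$ by a smooth rational expression in the entries of $R(\boldsymbol{z})$.

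Next I would verify that every ingredient is differentiable. Because $q$ is $C^1$, the residual $r(x)=x-q(x)$ is $C^1$, so each entry of $R(\boldsymbol{z})$, and hence $R(\boldsymbol{z})$ and $R(\boldsymbol{z})^TR(\boldsymbol{z})$, is a differentiable matrix-valued function of $\boldsymbol{z}$. Matrix inversion is smooth on the open set of invertible matrices (e.g.\ by Cramer's rule), so $\bigl(R^TR\bigr)^{-1}$ is differentiable; together with the differentiability of $r(z_{m+1})$, the product and chain rules give that $\boldsymbol{\beta}(\boldsymbol{z})$ is differentiable. Likewise $Q(\boldsymbol{z})$ is differentiable, so $Q(\boldsymbol{z})\boldsymbol{\beta}(\boldsymbol{z})$ is differentiable, and adding the differentiable block $q(z_{m+1})$ and the linear shift blocks shows $\Psi(\boldsymbol{z})$ is differentiable.

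The main obstacle is conceptual rather than computational: the pseudo-inverse $R^{\dagger}$ fails to be even continuous where the rank of $R$ can drop, so one cannot differentiate \cref{eq:beta-AAm} directly. The essential step is therefore the openness of the full-rank condition, which confines the analysis to a neighborhood on which the smooth formula $\bigl(R^TR\bigr)^{-1}R^T$ is valid; once this reduction is in place, differentiability follows from the standard calculus of smooth matrix operations, and no explicit evaluation of $\Psi'(\boldsymbol{z})$ is required.
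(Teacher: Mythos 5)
Your proof is correct and follows essentially the same route as the paper: continuity via \cref{pro:beta-full-rank-continuous} together with continuity of $q$ and $Q(\boldsymbol{z})$, and differentiability from the smoothness of $r$, $R$, $Q$ and $(R^TR)^{-1}$. The only difference is that you make explicit the step the paper leaves implicit --- that full rank is an open condition, so the pseudo-inverse coincides with the smooth formula $(R^TR)^{-1}R^T$ on a whole neighborhood --- which is a worthwhile clarification but not a different argument.
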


\begin{proof}
Recall  $\Psi(\boldsymbol{z})$ in \cref{eq:Psi-AAm}. When $R(\boldsymbol{z})$ in \cref{eq:beta-continuous} has full rank,
$\boldsymbol{\beta}(\boldsymbol{z})$ in \cref{eq:beta-AAm} is continuous by \cref{pro:beta-full-rank-continuous}, and $Q(\boldsymbol{z})$ in \cref{eq:def-Q-general} is continuous. It follows that $\Psi(\boldsymbol{z})$ is continuous.  Furthermore, since $q(z_1), r(z_1), Q(\boldsymbol{z}), R(\boldsymbol{z})$,  and $(R((\boldsymbol{z}))^TR(\boldsymbol{z}))^{-1}$ are differentiable,  $\Psi(\boldsymbol{z})$ is differentiable.
\end{proof}
\begin{proposition}\label{AAm-prop-noncon-rank-case1}
$\Psi(\boldsymbol{z})$ is not continuous at $\boldsymbol{z}$ when  $\boldsymbol{z}=\begin{bmatrix} z^T & z^T & \ldots & z^T \end{bmatrix}^T$ with $r(z)\neq 0$.
\end{proposition}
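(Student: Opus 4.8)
The plan is to reduce the AA($m$) statement to the AA(1) computation in \cref{prop:Psixx} by choosing a perturbation direction along which all but one column of $Q(\boldsymbol{z})$ and all but one component of $\boldsymbol{\beta}(\boldsymbol{z})$ vanish. This is exactly the path already used to establish the discontinuity of $\boldsymbol{\beta}$ in \cref{pro:beta-zj=z}, so most of the work is already in place. The idea is that along such a path the top block of $\Psi$ collapses to the AA(1) expression, whose limit was shown to be direction-dependent.

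Concretely, I would set $\boldsymbol{z}_0 = \begin{bmatrix} z^T & z^T & \cdots & z^T \end{bmatrix}^T$ with $r(z)\neq 0$ and take $\boldsymbol{d}$ with all blocks zero except $d_m = \epsilon e$, where $e$ is a unit vector chosen so that $r(z)-r(z+\epsilon e)\neq 0$ for small $\epsilon>0$. Along this path $z_{m+1}=z$ is fixed while only $z_m = z+\epsilon e$ moves, so that
\[
Q(\boldsymbol{z}_0+\boldsymbol{d}) = \begin{bmatrix} q(z)-q(z+\epsilon e) & 0 & \cdots & 0 \end{bmatrix},
\]
and \cref{pro:beta-zj=z} gives $\boldsymbol{\beta}(\boldsymbol{z}_0+\boldsymbol{d}) = \begin{bmatrix} \beta_1 & 0 & \cdots & 0 \end{bmatrix}^T$ with $\beta_1 = -(r(z)-r(z+\epsilon e))^T r(z)/\|r(z)-r(z+\epsilon e)\|^2$. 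Hence the top block of $\Psi(\boldsymbol{z}_0+\boldsymbol{d})$ reduces to $q(z)+\beta_1\,(q(z)-q(z+\epsilon e))$, which is precisely the AA(1) top block analysed in \cref{prop:Psixx}.

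I would then invoke the first-order expansions of $q$ and $r$ used there: by \cref{prop:betaxx}, $\beta_1$ grows like $1/\epsilon$ as $\epsilon\to 0$, while $q(z)-q(z+\epsilon e)$ is $O(\epsilon)$, so their product tends to a finite limit of the form $c(e)\,q'(z)e$ with $c(e)=\pm\,r(z)^T r'(z)e/\|r'(z)e\|^2$, matching the limit computed in \cref{prop:Psixx}. Since the lower blocks of $\Psi$ depend continuously on $\boldsymbol{z}$, all the discontinuity lives in this top block; and because this limit depends on $e$, it disagrees with the value $q(z)$ obtained along the diagonal path $d_1=\cdots=d_{m+1}$ (where $R$ and $Q$ vanish, forcing $\boldsymbol{\beta}=0$). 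Therefore $\Psi$ has no limit at $\boldsymbol{z}_0$ and is not continuous there. I expect no serious obstacle, since the argument is essentially a corollary of \cref{prop:Psixx} and \cref{pro:beta-zj=z}; the only points needing care are confirming that the single-column structure of $Q$ and $\boldsymbol{\beta}$ genuinely persists along this path and that $r(z)^T r'(z)e\neq 0$ for almost all $e$ (guaranteed by nonsingularity of $r'(z)$), so that $c(e)\,q'(z)e$ can be made nonzero and the direction-dependence is real rather than a degenerate zero limit.
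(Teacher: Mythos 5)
Your proposal is correct and follows essentially the same route as the paper: perturb only the $z_m$ block by $\epsilon e$, observe that $Q$ and $\boldsymbol{\beta}$ collapse to a single nonzero column/component so that the top block of $\Psi$ reduces to the AA(1) expression, and then invoke the direction-dependent limit from \cref{prop:Psixx} (via \cref{pro:beta-zj=z}). The extra care you take about the diagonal path and the non-degeneracy of $r(z)^T r'(z)e$ is consistent with, and slightly more explicit than, the paper's argument.
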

\begin{proof}
Let $\boldsymbol{z}_0 =\begin{bmatrix} z^T & z^T & \ldots & z^T \end{bmatrix}^T$ with $r(z) \neq 0$ and $\boldsymbol{d} =\begin{bmatrix} 0^T & d^T & 0&  \ldots & 0^T \end{bmatrix}^T$, where $d=\epsilon\, e$ with $e$ a unit vector
in $\mathbb{R}^n$ and $r(z)-r(z+d)\neq 0$.  Then,
\begin{equation*}
  Q(\boldsymbol{z}_0+\boldsymbol{d}) = \begin{bmatrix} q(z)-q(z+d) & 0 & \ldots & 0\end{bmatrix}.
\end{equation*}
Let $w=r(z)-r(z+d)$.  From the proof of \cref{pro:beta-zj=z}, we have
\begin{equation*}
   Q(\boldsymbol{z}_0+\boldsymbol{d})\boldsymbol{\beta}(\boldsymbol{z}_0+\boldsymbol{d}) = \frac{-w^T r(z)}{w^Tw}\big(q(z)-q(z+d)\big).
\end{equation*}
 From \cref{eq:Psi-AAm} we have
\begin{equation*}
 \Psi(\boldsymbol{z}_0+\boldsymbol{d}) =
 \begin{bmatrix}
  q(z) -\displaystyle \frac{w^T r(z)}{w^Tw}\big(q(z)-q(z+d)\big)\\
   z+d_{m+1}\\
   \vdots \\
   z+d_2
  \end{bmatrix}.
\end{equation*}
From the proof of \cref{prop:Psixx}, we know that the limit of  $q(z) -\displaystyle \frac{w^T r(z)}{w^Tw}\big(q(z)-q(z+d)\big)$ as $\epsilon\rightarrow 0$ depends on the choice of $e$. It follows that $\Psi(\boldsymbol{z}_0+\boldsymbol{d})$ is not continuous at $\boldsymbol{z}_0$.
\end{proof}
We analyze the continuity of case (c) for the linear case only, because it is not clear how to generalize \cref{prop:Psix*} for $m>1$ in the nonlinear case. Differentiability for case (c) is discussed in the next subsection.
\begin{proposition}\label{AAm-prop-noncon-rank-zm+1=x*}
In the linear case, $\Psi(\boldsymbol{z})$ is   Lipschitz continuous at $\boldsymbol{z}$ when  $R(\boldsymbol{z})$ is rank-deficient and $z_{m+1}=x^*$.
\end{proposition}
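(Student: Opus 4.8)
The plan is to reduce everything to the linear algebra of the pseudo-inverse and to exploit the fact that, at the base point, the current residual vanishes, so that the (potentially unbounded) pseudo-inverse only ever appears multiplied by a small quantity. First I would rewrite the relevant quantities using the linear form \cref{eq:linear-q-form}. Writing $r(x)=Ax-b$ and $e_{m+1}=z_{m+1}-x^*$, and using $b=Ax^*$, we have $r(z_{m+1})=A(z_{m+1}-x^*)=Ae_{m+1}$; moreover each column satisfies $q(z_{m+1})-q(z_{m+1-j})=M(z_{m+1}-z_{m+1-j})$ and $r(z_{m+1})-r(z_{m+1-j})=A(z_{m+1}-z_{m+1-j})$, so that, with $D=D(\boldsymbol z)$ from \cref{eq:def-D-operator}, we get $Q(\boldsymbol z)=MD$ and $R(\boldsymbol z)=AD$. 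At the base point $\boldsymbol z_0$ (where $z_{m+1}=x^*$) we have $r(z_{m+1})=0$, hence $\boldsymbol\beta(\boldsymbol z_0)=0$ by \cref{eq:beta-AAm}, and the top block of $\Psi(\boldsymbol z_0)$ equals $q(x^*)=x^*$ while the remaining blocks are $(x^*,z_m,\dots,z_2)$.

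Next I would compute the top block of $\Psi(\boldsymbol z)-\Psi(\boldsymbol z_0)$ for an arbitrary perturbation $\boldsymbol h=\boldsymbol z-\boldsymbol z_0$. Using $\boldsymbol\beta(\boldsymbol z)=-R^\dagger r(z_{m+1})=-(AD)^\dagger Ae_{m+1}$ together with $Q=MD$, the top block minus $x^*$ becomes
$$Me_{m+1}+Q\boldsymbol\beta = M\big(I-D(AD)^\dagger A\big)e_{m+1}=M(I-P)e_{m+1},\qquad P:=D(AD)^\dagger A.$$
The crucial observation is the identity $AD(AD)^\dagger=\Pi$, where $\Pi$ is the \emph{orthogonal} projector onto the column space of $AD$; this is a defining property of the Moore--Penrose pseudo-inverse and holds whether or not $AD$ has full rank. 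Consequently $AP=\Pi A$, and since $A$ is nonsingular, $P=A^{-1}\Pi A$. Because $\|\Pi\|\le 1$, this yields the uniform bound $\|P\|\le\|A^{-1}\|\,\|A\|$, valid for every $\boldsymbol z$ regardless of the rank of $R(\boldsymbol z)$.

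The main obstacle is exactly the discontinuity established in \cref{pro:beta-z=z^*}: near a rank-deficient point $(AD)^\dagger$ blows up, so $\boldsymbol\beta$ is unbounded and no naive product bound survives. The resolution is that $(AD)^\dagger$ enters $\Psi$ only through $Q\boldsymbol\beta=MD(AD)^\dagger(\cdot)$, and the combination $MD(AD)^\dagger A=MP=MA^{-1}\Pi A$ reorganizes into the bounded operator above, with the dangerous factor absorbed into the norm-one orthogonal projector. Combining the pieces, $\|Me_{m+1}-MPe_{m+1}\|\le\|M\|\,(1+\|A^{-1}\|\,\|A\|)\,\|e_{m+1}\|$, and since $\|e_{m+1}\|=\|z_{m+1}-x^*\|\le\|\boldsymbol h\|$, the top block is controlled with constant $(1+\|A^{-1}\|\,\|A\|)\|I-A\|$.

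Finally, the lower blocks of $\Psi$ are the shift $(z_{m+1},z_m,\dots,z_2)$, which is Lipschitz with constant $1$. Adding the two contributions via the triangle inequality across blocks gives Lipschitz continuity at $\boldsymbol z_0$ with the same global constant $L=(\|A^{-1}\|\,\|A\|+1)\|I-A\|+1$ obtained for AA(1) in \cref{prop:Psix*}. I note that rank-deficiency of $R(\boldsymbol z)$ is never actually used in the estimate, so the argument covers both cases (c) and (a) uniformly; it is singled out here because it is the nontrivial case not already handled by the differentiability result \cref{prop:Psi-AAm-full-rank-diff}.
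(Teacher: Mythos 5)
Your proposal is correct and follows essentially the same route as the paper's proof in \cref{app:proof-lipsch-m}: both exploit that $\boldsymbol{\beta}$ vanishes at the base point since $r(x^*)=0$, and both tame the potentially unbounded pseudo-inverse by observing that it enters only through the orthogonal projector $AD(AD)^{\dagger}$ of norm at most one (your identity $MD(AD)^{\dagger}A=MA^{-1}\Pi A$ is the same manipulation as the paper's factorization $(I-A)\widehat{D}=(A^{-1}-I)A\widehat{D}$), arriving at the identical Lipschitz constant $L=(\|A^{-1}\|\|A\|+1)\|I-A\|+1$. Your closing remark that rank-deficiency of $R(\boldsymbol{z})$ is never used is also accurate and consistent with the paper's proof.
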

\begin{proof}
See \cref{app:proof-lipsch-m}.
\end{proof}
Note that \cref{AAm-prop-noncon-rank-zm+1=x*} contains the special case that $\boldsymbol{z}=\boldsymbol{z}^*$.

\subsection{Differentiability of $\Psi(\boldsymbol{z})$ at $\boldsymbol{z}^*$  for AA($m$)}

In this subsection we investigate the differentiability of 	$\Psi(\boldsymbol{z})$ at $\boldsymbol{z}^*$. We first consider the directional derivatives of $\Psi(\boldsymbol{z})$ at $\boldsymbol{z}^*$.
\begin{theorem}\label{thm:proof-directional-m}
Let  $M=q'(x^*)$ and $A= r'(x^*)=I-q'(x^*)$. Then, the  directional derivative of $\Psi(\boldsymbol{z})$ at $\boldsymbol{z}^*$ in any direction $\boldsymbol{d}$ such that $D(\boldsymbol{d})$ is full rank is given by
\begin{equation}\label{eq:-direct-nonlinear-AAm-matrix}
  \mathfrak{ D} \Psi(\boldsymbol{z}^*,\boldsymbol{ d}) = \begin{bmatrix}  (1+\sum_{j=1}^{m} \widehat{\beta}_j)M &- \widehat{\beta}_{1}M  &\cdots   & -\widehat{\beta}_{m-1}M &-\widehat{\beta}_{m}M\\
  I&   0&   &    0&  0\\
  0&   I&   &    0&  0\\
  \vdots & \vdots &  &\vdots &\vdots\\
  0&  0&  \cdots&   I& 0
  \end{bmatrix}\boldsymbol{d},
\end{equation}
where $\widehat{\boldsymbol{\beta}}(\boldsymbol{d})=\begin{bmatrix} \widehat{\beta}_1 & \widehat{\beta}_2 & \cdots & \widehat{\beta}_m \end{bmatrix}^T= -( AD(\boldsymbol{d}))^{\dag}A d_{m+1}$
with $D(\boldsymbol{d})$ defined in \cref{eq:def-D-operator}.
\end{theorem}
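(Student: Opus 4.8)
The plan is to compute the directional derivative $\mathfrak{D}\Psi(\boldsymbol{z}^*,\boldsymbol{d}) = \lim_{h\downarrow 0} \frac{\Psi(\boldsymbol{z}^*+h\boldsymbol{d})-\boldsymbol{z}^*}{h}$ directly from the defining formulas \cref{eq:Psi-AAm} and \cref{eq:beta-AAm}, generalizing the AA(1) computation of \cref{thm:x=y-directional-dif}. Since the bottom block rows of $\Psi$ are simply shifts $z_{m+1},z_m,\ldots,z_2$, their contribution to the directional derivative is immediate and reproduces the identity-and-zero block structure in the last $m$ rows of \cref{eq:-direct-nonlinear-AAm-matrix}; the entire content is in the first block row $q(z_{m+1})+Q(\boldsymbol{z})\boldsymbol{\beta}(\boldsymbol{z})$. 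Writing $\boldsymbol{z}^*+h\boldsymbol{d}$ componentwise as $z_{m+1}=x^*+h\,d_{m+1}$, etc., I would first Taylor-expand the residuals using $r(x^*+h\,d)=r(x^*)+A\,h\,d+o(h)=A\,h\,d+o(h)$ (recall $r(x^*)=0$ and $A=r'(x^*)$), so that each column of $R(\boldsymbol{z}^*+h\boldsymbol{d})$ equals $A(h\,d_{m+1}-h\,d_{m+1-j})+o(h)$, giving $R(\boldsymbol{z}^*+h\boldsymbol{d})=h\,A\,D(\boldsymbol{d})+o(h)$ with $D(\boldsymbol{d})$ as in \cref{eq:def-D-operator}. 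Similarly $r(z_{m+1})=h\,A\,d_{m+1}+o(h)$ and, for the coefficient matrix, $q(z_{m+1})-q(z_{m+1-j})=M(h\,d_{m+1}-h\,d_{m+1-j})+o(h)$, so $Q(\boldsymbol{z}^*+h\boldsymbol{d})=h\,M\,D(\boldsymbol{d})+o(h)$.

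The key observation is how the $1/h$ scaling interacts with the pseudo-inverse. Under the hypothesis that $D(\boldsymbol{d})$ has full rank, the matrix $A\,D(\boldsymbol{d})$ also has full rank (since $A$ is nonsingular), so $R(\boldsymbol{z}^*+h\boldsymbol{d})$ has full rank for small $h>0$ and its pseudo-inverse is the genuine least-squares inverse $(R^TR)^{-1}R^T$. Then $\boldsymbol{\beta}(\boldsymbol{z}^*+h\boldsymbol{d})=-R^{\dag}r(z_{m+1})$; substituting the expansions, the leading $h$ factors cancel between numerator and the $R^TR$ factor so that $\lim_{h\downarrow 0}\boldsymbol{\beta}(\boldsymbol{z}^*+h\boldsymbol{d})=-(A\,D(\boldsymbol{d}))^{\dag}A\,d_{m+1}=\widehat{\boldsymbol{\beta}}(\boldsymbol{d})$, exactly the quantity named in the statement. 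I would make this cancellation precise by factoring out $h$ from $R$ and $r$ and invoking continuity of the (full-rank) pseudo-inverse map. Finally, for the first block row I would compute
\begin{equation*}
\lim_{h\downarrow 0}\frac{q(z_{m+1})+Q\boldsymbol{\beta}-x^*}{h}
= M\,d_{m+1} + \lim_{h\downarrow 0}\frac{Q(\boldsymbol{z}^*+h\boldsymbol{d})}{h}\,\boldsymbol{\beta}(\boldsymbol{z}^*+h\boldsymbol{d}),
\end{equation*}
and since $Q/h\to M\,D(\boldsymbol{d})$ while $\boldsymbol{\beta}\to\widehat{\boldsymbol{\beta}}(\boldsymbol{d})$, the second term is $M\,D(\boldsymbol{d})\,\widehat{\boldsymbol{\beta}}(\boldsymbol{d})=\sum_{j=1}^m \widehat{\beta}_j\,M(d_{m+1}-d_{m+1-j})$. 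Collecting the coefficient of each $d_{m+1-j}$ then reproduces exactly the first block row of the matrix in \cref{eq:-direct-nonlinear-AAm-matrix}.

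The main obstacle I anticipate is handling the two limits $Q/h$ and $\boldsymbol{\beta}$ simultaneously with adequate rigor: one factor blows up like $1/h$ while the other vanishes like $h$, so I must justify the product limit carefully rather than splitting it. The clean way is to keep everything together — writing $Q(\boldsymbol{z}^*+h\boldsymbol{d})\,\boldsymbol{\beta}(\boldsymbol{z}^*+h\boldsymbol{d})/h = (Q/h)\,\boldsymbol{\beta}$ and showing $Q/h$ converges to a finite limit $M\,D(\boldsymbol{d})$ while $\boldsymbol{\beta}$ converges to the finite limit $\widehat{\boldsymbol{\beta}}(\boldsymbol{d})$, both under the full-rank assumption, so the product of limits is legitimate. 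The full-rank hypothesis on $D(\boldsymbol{d})$ is precisely what guarantees the pseudo-inverse is continuous near the limit and keeps $\boldsymbol{\beta}$ bounded, avoiding the unbounded behaviour seen in \cref{pro:beta-zj=z}; this is why the theorem restricts to such directions. I expect the remaining bookkeeping — tracking the $o(h)$ remainder terms through the pseudo-inverse — to be routine given the full-rank assumption, and I would relegate those estimates to the appendix as the excerpt indicates.
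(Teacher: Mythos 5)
Your proposal is correct and follows essentially the same route as the paper's proof in \cref{app:proof-directional-m}: Taylor-expand $r$ and $q$ so that $R=hAD(\boldsymbol{d})+o(h)$ and $Q=hMD(\boldsymbol{d})+o(h)$, use the full-rank hypothesis on $D(\boldsymbol{d})$ (hence on $AD(\boldsymbol{d})$) to invoke continuity of the pseudo-inverse and pass to the limit, and read off the first block row from $Md_{m+1}+MD(\boldsymbol{d})\widehat{\boldsymbol{\beta}}(\boldsymbol{d})$. The only cosmetic difference is that you factor the product as $(Q/h)\cdot\boldsymbol{\beta}$ while the paper factors it as $(-QR^{\dagger})\cdot(r/h)$ and cites Stewart's perturbation bound for the pseudo-inverse limit; both factorizations yield finite limits under the same hypothesis.
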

\begin{proof}
See \cref{app:proof-directional-m}.
\end{proof}
\begin{remark}
The result in \cref{eq:-direct-nonlinear-AAm-matrix} holds for the linear case without the requirement that $D(\boldsymbol{d})$ is full rank because the term $P(hd_{m+1})$ in \cref{eq:expansion-r} vanishes.
For the nonlinear case, when $D(\boldsymbol{d})$ is rank-deficient, we do not know whether \cref{eq:limit-speudo-inverse} holds.
\end{remark}

\begin{remark}
For $m>1$, $\Psi(\boldsymbol{z})$ is  not differentiable at $\boldsymbol{z}^*$, because
the matrix in \cref{eq:-direct-nonlinear-AAm-matrix} depends on $\boldsymbol{d}$ and
$\mathfrak{D} \Psi(\boldsymbol{z}^*,\boldsymbol{d})$ cannot be written as $\Psi'(\boldsymbol{z}^*)\boldsymbol{d}$.
\end{remark}
\section{Numerical results}
\label{sec:numerics}
In this section, we give further numerical results expanding on the AA(1) convergence patterns we identified in \cref{fig:simple-MC} for the simple $2 \times 2$ linear equation of \cref{prob:linear2x2}. We extend the numerical tests to larger linear problems and a nonlinear problem, for $m=1$ and $m>1$. We are also interested in comparing the convergence behavior of AA($m$) with GMRES for the linear problems: we compare the standard windowed version of AA($m$) with GMRES and a restarted version of AA($m$), which is essentially restarted GMRES($m$). We relate the numerical results to the theoretical findings of \Cref{sec:AA1,sec:AAm}.

\begin{figure}[h]
\centering
\includegraphics[width=.6\textwidth]{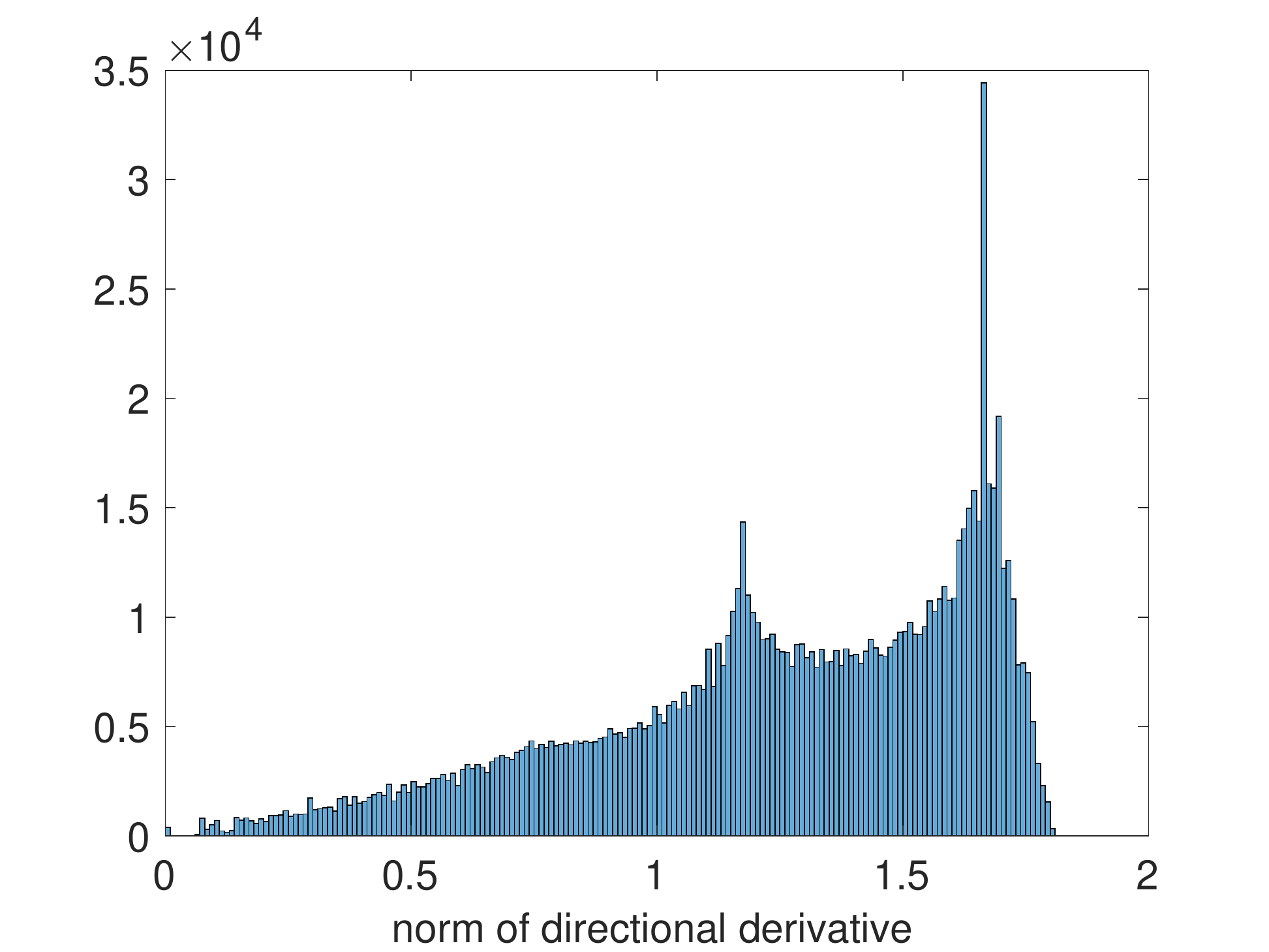}
\caption{
\cref{prob:linear2x2} (linear): Histogram of the norm of the directional derivative $\mathfrak{ D} \Psi(\boldsymbol{z},\boldsymbol{ d})$ of $\Psi(\boldsymbol{z})$ at $\boldsymbol{z}^*$ in unit vector direction $\boldsymbol{ d}$ (see Eq.\ \cref{AA(1)-direction-diff}), for $10^6$ unit vectors on a uniform polar grid in 4D space.} \label{prob2-dir_deriv}
\end{figure}

We first revisit the numerical results of \cref{fig:simple-MC} for the $2 \times 2$ linear equation of \cref{prob:linear2x2}.
As discussed before, \cref{fig:simple-MC} indicates that AA(1) sequences $\{x_k\}$ for \cref{prob:linear2x2}
converge r-linearly with a continuous spectrum of convergence factors $\rho_{\{x_k\}}$, and it appears that
a least upper bound $\rho_{\Psi,x^*}$ for $\rho_{\{x_k\}}$ exists for the AA(1) iteration \cref{eq:AA-fixed-point}
that is smaller than, say, 0.45, and substantially smaller than the r-linear convergence factor $\rho_{q,x^*}=2/3$ of
fixed-point iteration \cref{eq:fixed-point} by itself. Since there currently is no theory to establish the existence or
value of $\rho_{\Psi,x^*}$, it is interesting to investigate, in light of \cref{thm:x=y-directional-dif}, whether the existence of
all directional derivatives of $\Psi(\boldsymbol{z})$ at $\boldsymbol{z}^*$ may tell us something about the existence or value
of $\rho_{\Psi,x^*}$. As is well-known, in the case of an iteration function $\Psi(z)$ that is $L$-Lipschitz in a neighborhood of
$z^*$ with $L<1$, the FP iteration $z_{k+1}=\Psi(z_k)$ converges q-linearly with q-linear convergence factor not worse than $L$ \cite{kelley1995iterative}. In the case of AA(1), $\Psi(\boldsymbol{z})$ is not $L$-Lipschitz in a neighborhood of $\boldsymbol{z}^*$
(since, by \cref{prop:Psixx}, $\Psi(\boldsymbol{z})$ is not continuous at $\boldsymbol{z} =\begin{bmatrix} x\\ y\end{bmatrix}$ when
$x= y$ with $r(x) \neq 0$), but it is still interesting to investigate the size of the directional derivatives that we know by \cref{thm:x=y-directional-dif} exist in all directions at $\boldsymbol{z}^*$.

\cref{prob2-dir_deriv} shows a histogram for \cref{prob:linear2x2} of the norm of the directional derivative $\mathfrak{ D} \Psi(\boldsymbol{z},\boldsymbol{ d})$ of $\Psi(\boldsymbol{z})$ at $\boldsymbol{z}^*$ in unit vector direction $\boldsymbol{ d}$ (see Eq.\ \cref{AA(1)-direction-diff}), for $10^6$ unit vectors on a uniform polar grid in 4D space. The histogram indicates that the unit directional derivatives are bounded above by a value of about 1.6, but it is interesting that this value is greater than 1, which indicates that directional derivates or Lipschitz constants are not a useful avenue to prove the existence of a least upper bound $\rho_{\Psi,x^*}<\rho_{q,x^*}$ for AA(1).

\begin{figure}[h]
\centering
\includegraphics[width=.49\textwidth]{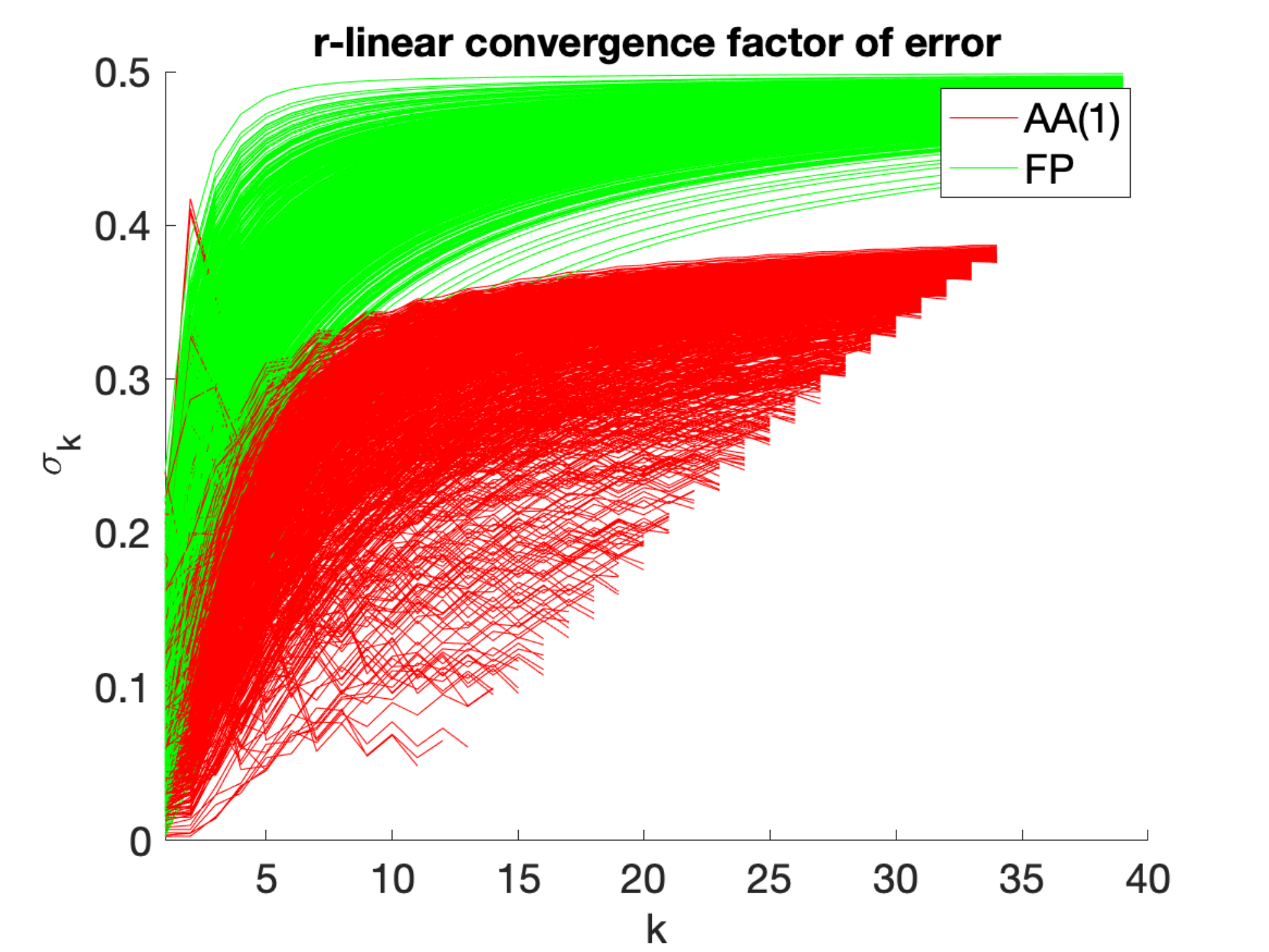}
\includegraphics[width=.49\textwidth]{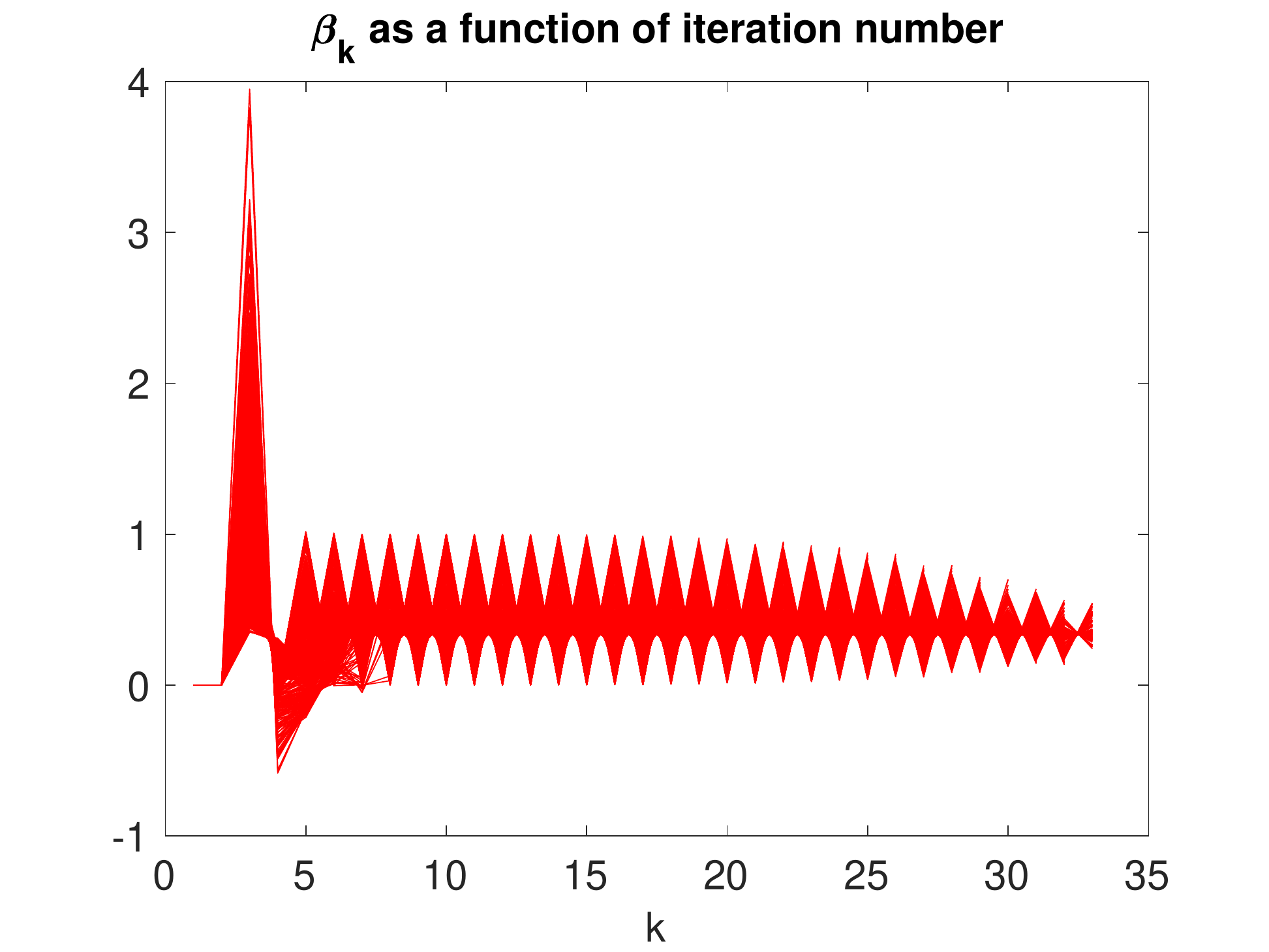}
\includegraphics[width=.325\textwidth]{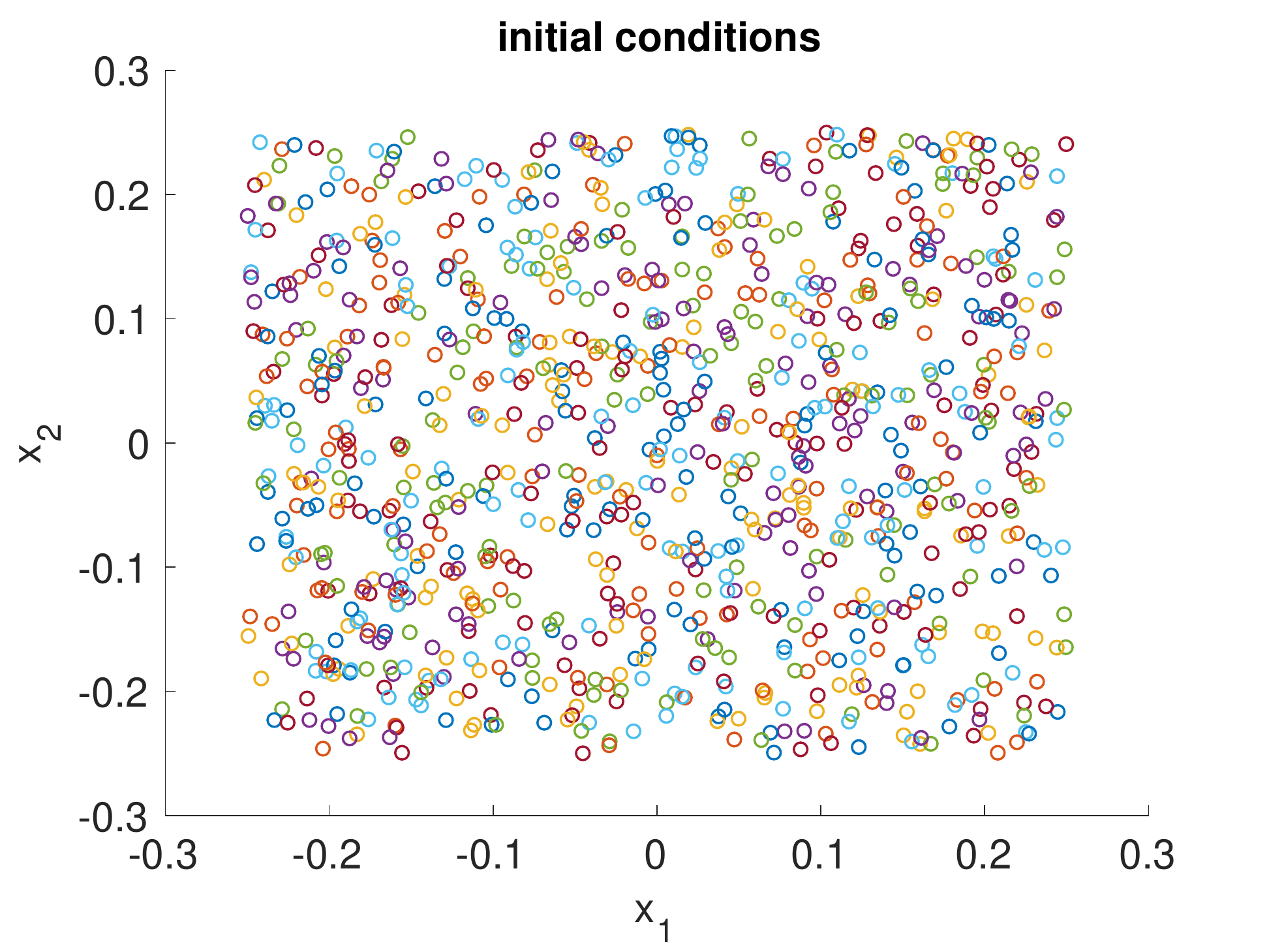}
\includegraphics[width=.325\textwidth]{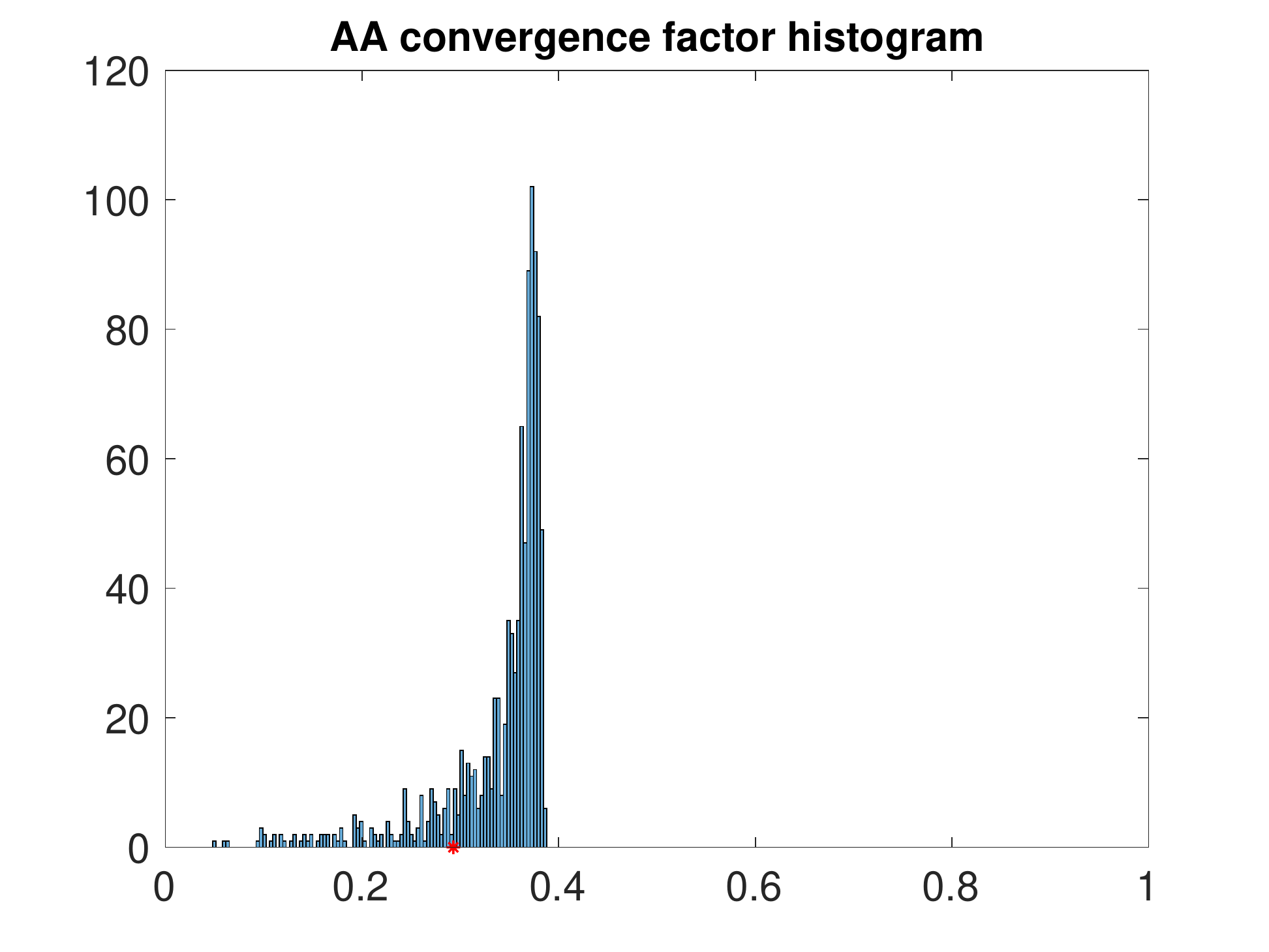}
\includegraphics[width=.325\textwidth]{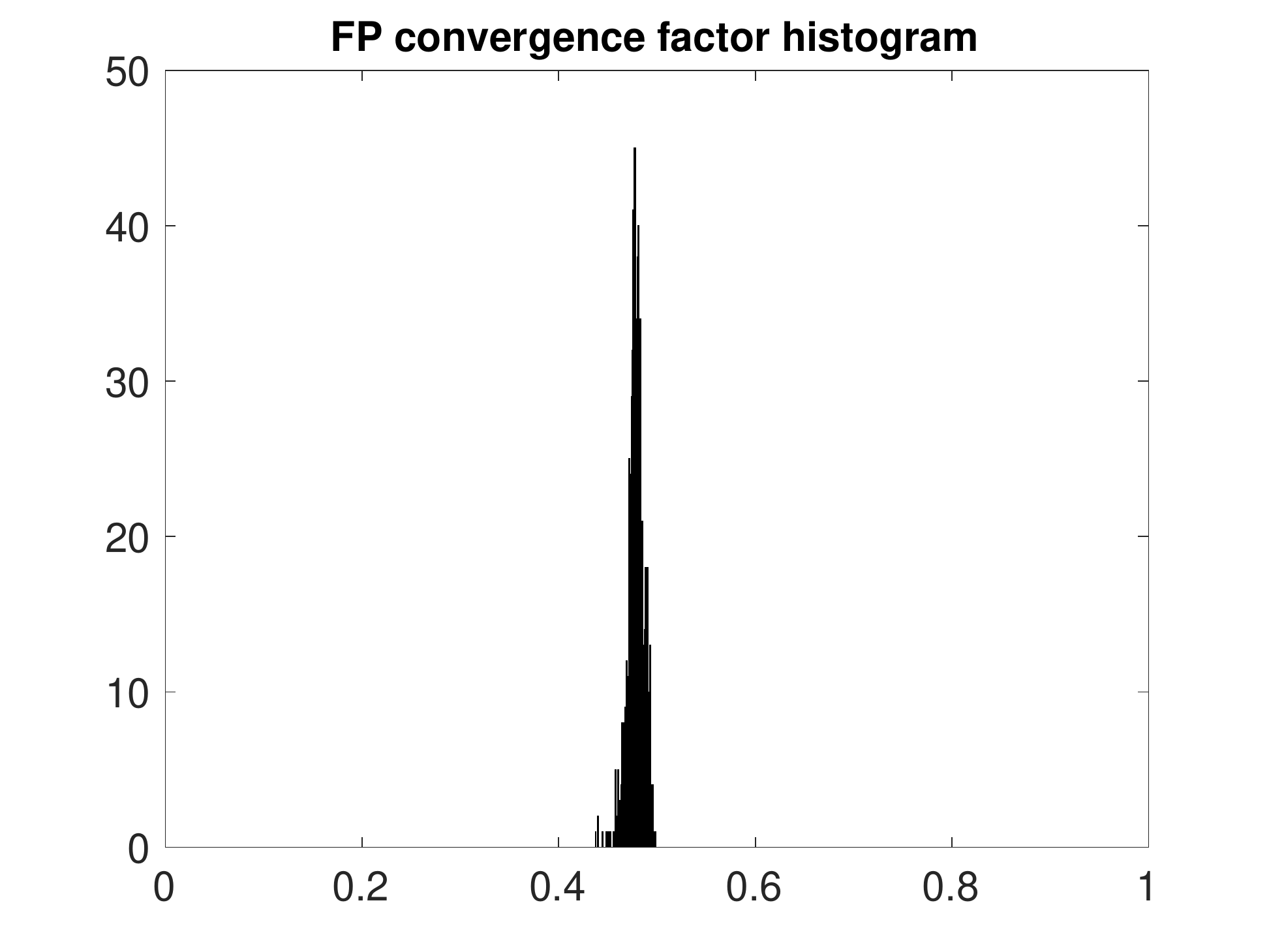}
\caption{FP and AA(1) results for nonlinear \cref{prob:nonlinear2x2} with 1,000 random initial guesses.} \label{prob3-mc}
\end{figure}

We next consider a nonlinear example:
\begin{problem}\label{prob:nonlinear2x2}
Consider the nonlinear system
\begin{align}
x_2= x_1^2 \label{eq:exm1-1}\\
x_1+(x_1-1)^2 +x_2^2 =1 \label{eq:exm1-2}
\end{align}
with solution $(x_1^*,x_2^*) = (0,0)$.
Let $x=[x_1 \ x_2]^T$ and define the FP iteration function
\begin{equation*}
  q(x) = \begin{bmatrix} \ds \frac{1}{2}(x_1+x_1^2+x_2^2) \\ \\ \ds \frac{1}{2}(x_2+x_1^2) \end{bmatrix},
\end{equation*}
with Jacobian matrix
\begin{equation*}
q'(x)=
\begin{bmatrix}
x_1+\ds \frac{1}{2}& x_2\\
x_1 & \ds \frac{1}{2}
\end{bmatrix}.
\end{equation*}
We have
\begin{equation*}
  q'(x^*) = \begin{bmatrix}
  \ds \frac{1}{2}& 0 \\
0 & \ds \frac{1}{2}
  \end{bmatrix}, \ \textrm{and} \quad
\rho(q'(x^*)) =\ds \frac{1}{2}<1.
\end{equation*}
\end{problem}

\begin{figure}[h]
\centering
\includegraphics[width=.49\textwidth]{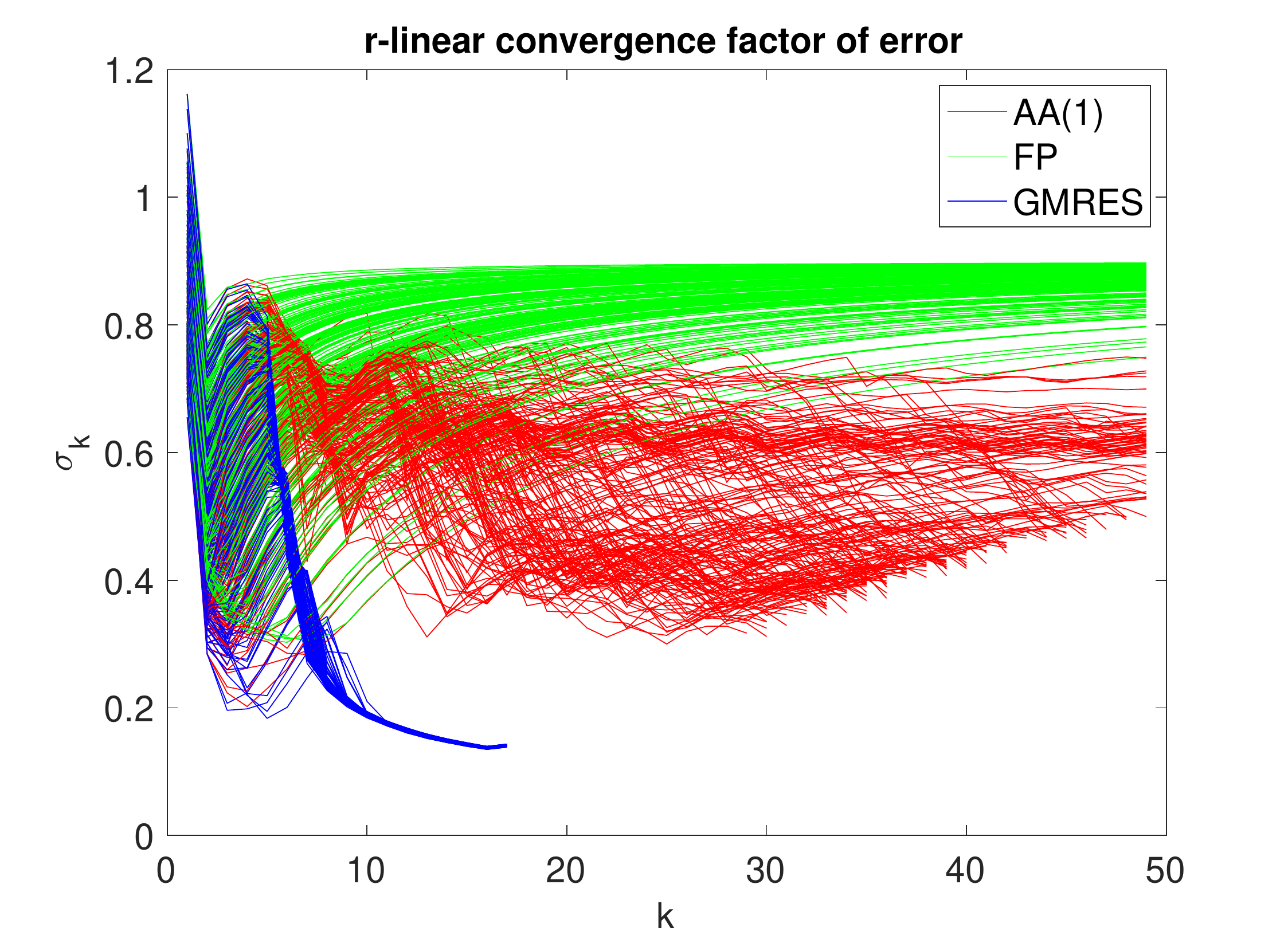}
\includegraphics[width=.49\textwidth]{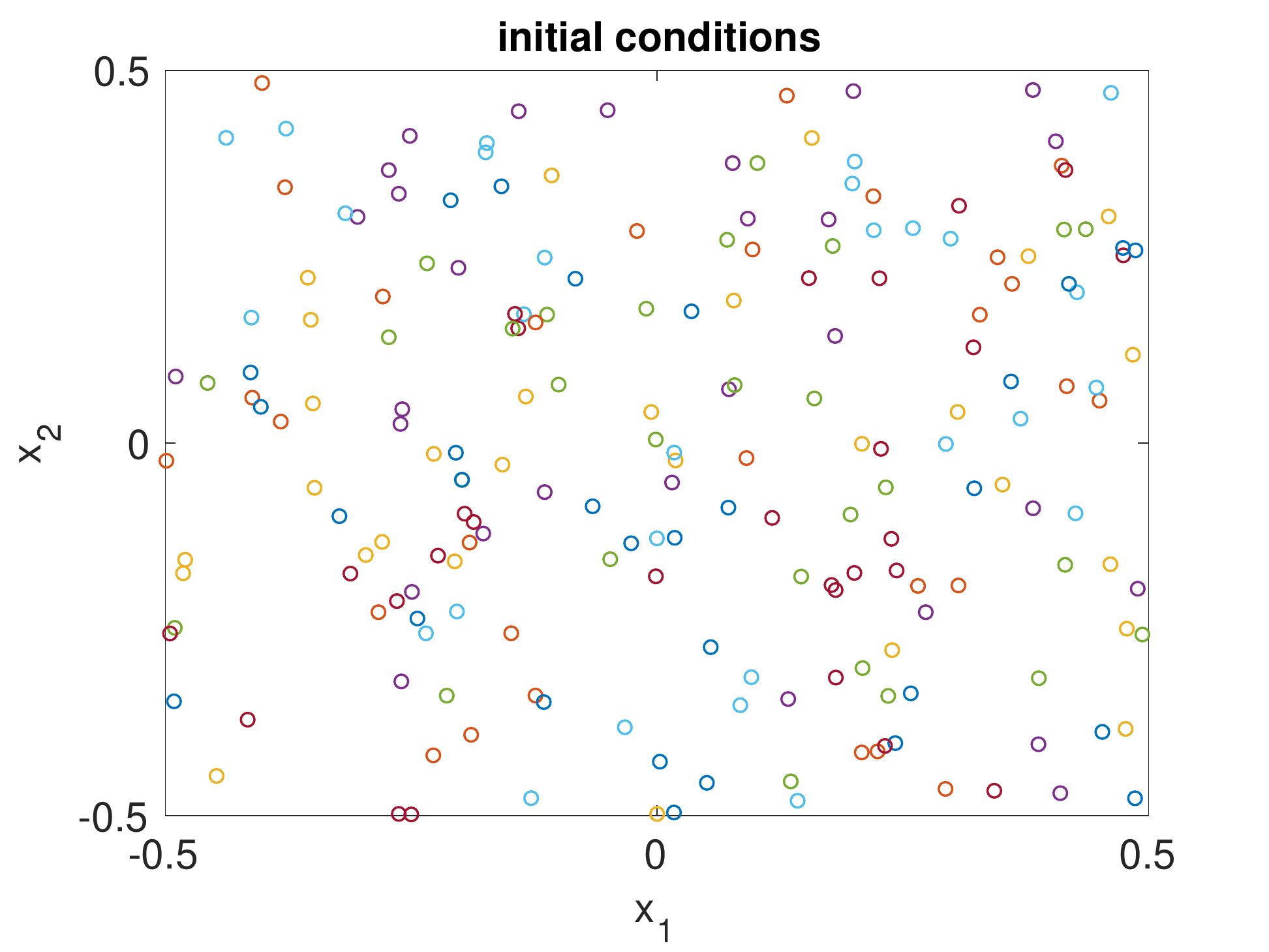}
\includegraphics[width=.325\textwidth]{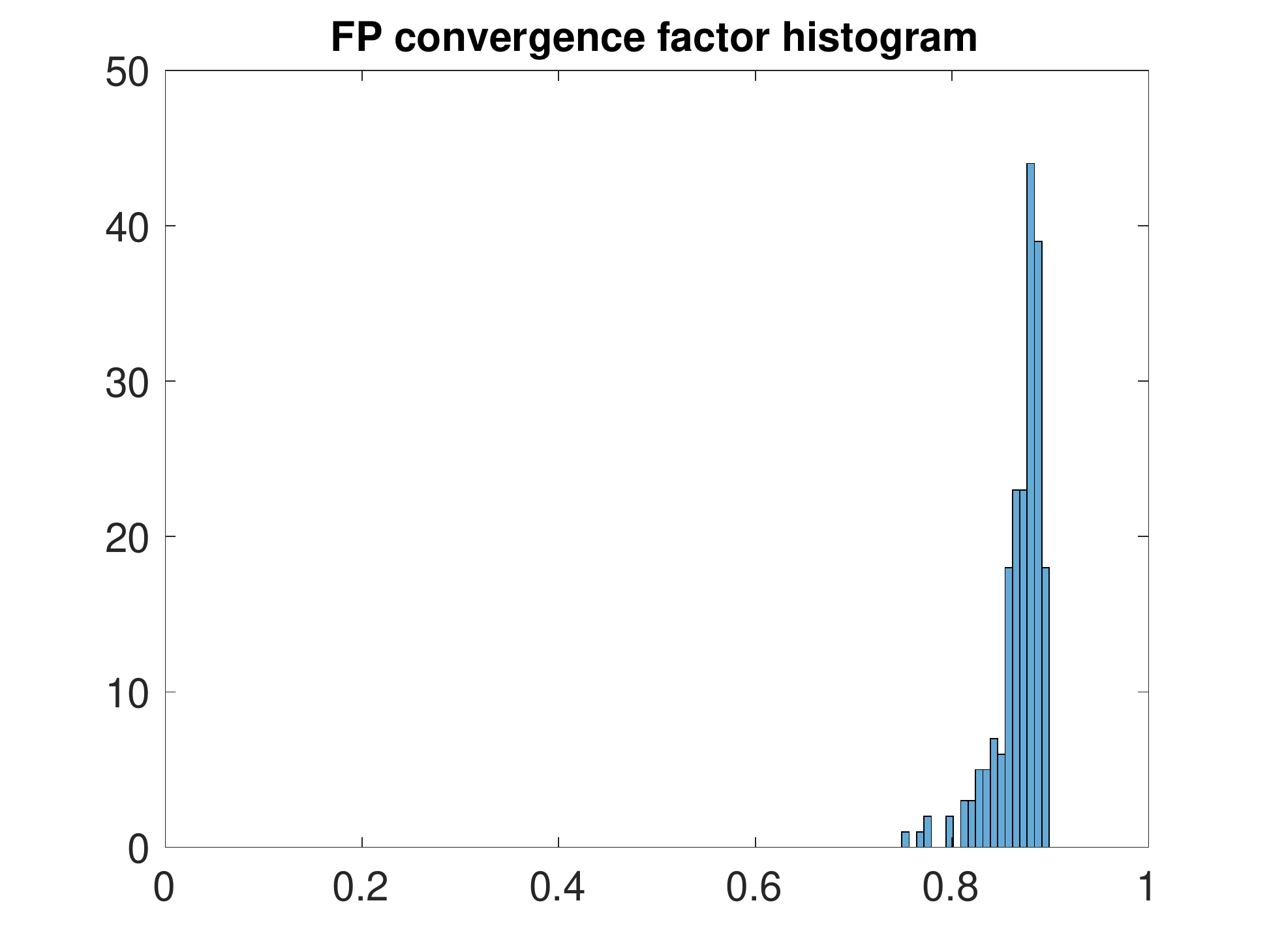}
\includegraphics[width=.325\textwidth]{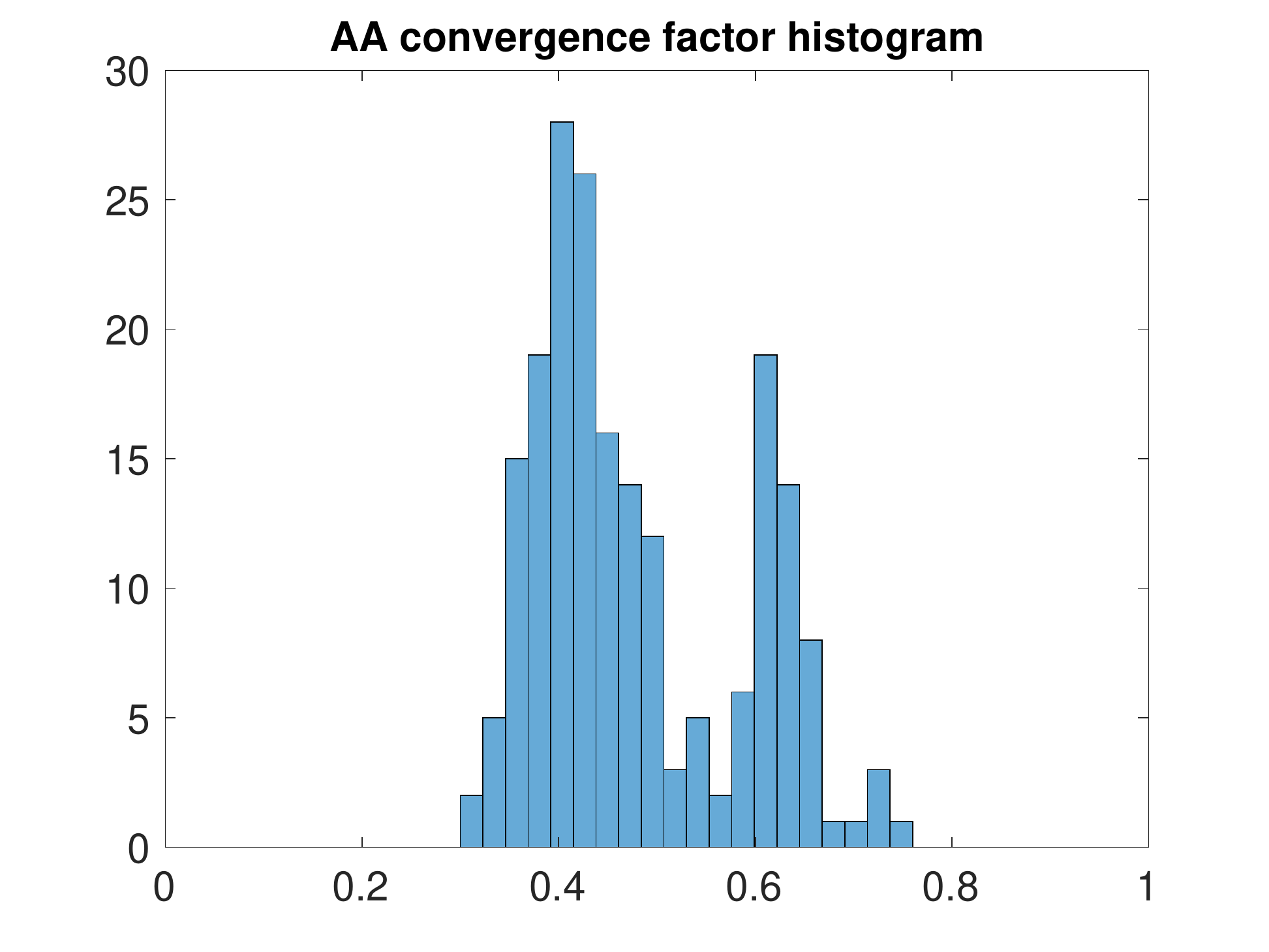}
\includegraphics[width=.325\textwidth]{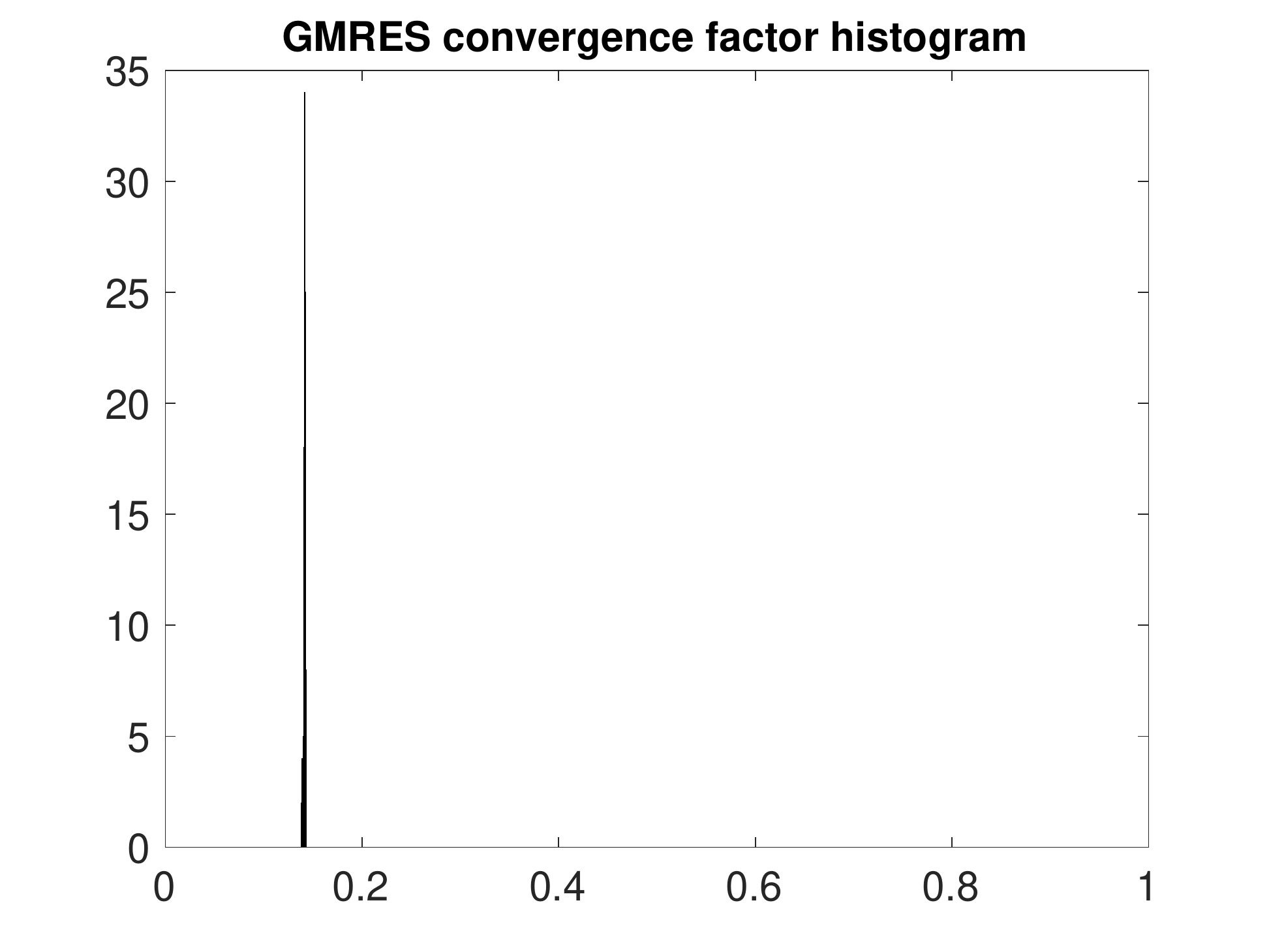}
\caption{
\cref{prob:linear200}: linear problem with $M \in \mathbb{R}^{200 \times 200}$ and $\lambda_1=0.9$, $\lambda_2=-0.3$, $\lambda_3=0.3$, and $\lambda_4=-0.3$, for 200 random initial guesses. Comparison of AA(1) (red) with AA($\infty$) (blue), which is essentially equivalent to GMRES. It can be observed that the asymptotic linear convergence factors of the AA(1) sequences strongly depend on the initial guess, but the asymptotic linear convergence factors of the AA($\infty$) sequences and of the FP sequences do not depend on the initial guess.
} \label{prob18-gmresinf}
\end{figure}

\cref{prob3-mc} shows FP and AA(1) numerical results for the nonlinear \cref{prob:nonlinear2x2}.
The nonlinear results of \cref{prob3-mc} show convergence behavior that is qualitatively similar to the linear results of \cref{fig:simple-MC}:
the AA(1) sequences $\{x_k\}$ converge r-linearly, but the r-linear convergence factors $\rho_{\{x_k\}}$ depend on the initial guess on a set of nonzero measure. It appears that a least upper bound $\rho_{\Psi,x^*}$ for $\rho_{\{x_k\}}$ exists for the AA(1) iteration \cref{eq:AA-fixed-point}
that is smaller than the r-linear convergence factor $\rho_{q,x^*}=1/2$ of fixed-point iteration \cref{eq:fixed-point} by itself.
We also see that the $\beta_k$ sequences oscillate for this nonlinear problem as the AA(1) iteration approaches $x^*$, consistent with the
discontinuity of $\beta(\boldsymbol{z})$ at $\boldsymbol{z}^*$ shown in \cref{prop:betax*}.


We next consider a larger linear problem that we will use for comparison of AA($m$) to GMRES and a restarted version of AA($m$).
\begin{problem}\label{prob:linear200}
Consider the linear iteration
\begin{equation}\label{eq:q-linear200}
  x_{k+1} = M x_k,
\end{equation}
with $M \in \mathbb{R}^{200 \times 200}$, where $M$ is diagonal except that $m_{1,2}=1$.
$M$ has 196 eigenvalues that are spaced uniformly between 0.29325 and 0.03, and 4 eigenvalues $\lambda_1$ to $\lambda_4$
that are specified such that $\lambda_1=0.9$ and $\lambda_1$ to $\lambda_3$ take on values that are specific to the problem instantiation (see results figures). In all cases, $\rho(M)=0.9$.
\end{problem}

\cref{prob18-gmresinf} shows results for \cref{prob:linear200} with eigenvalues $\lambda_1=0.9$, $\lambda_2=-0.3$, $\lambda_3=0.3$, and $\lambda_4=-0.3$. Comparing FP and AA(1) with AA($\infty$), which is essentially equivalent to GMRES. As in previous examples, the asymptotic linear convergence factors of the AA(1) sequences strongly depend on the initial guess, but it is interesting to observe that the asymptotic linear convergence factors of the AA($\infty$) sequences do not depend on the initial guess.

\begin{figure}[h]
\centering
\includegraphics[width=.49\textwidth]{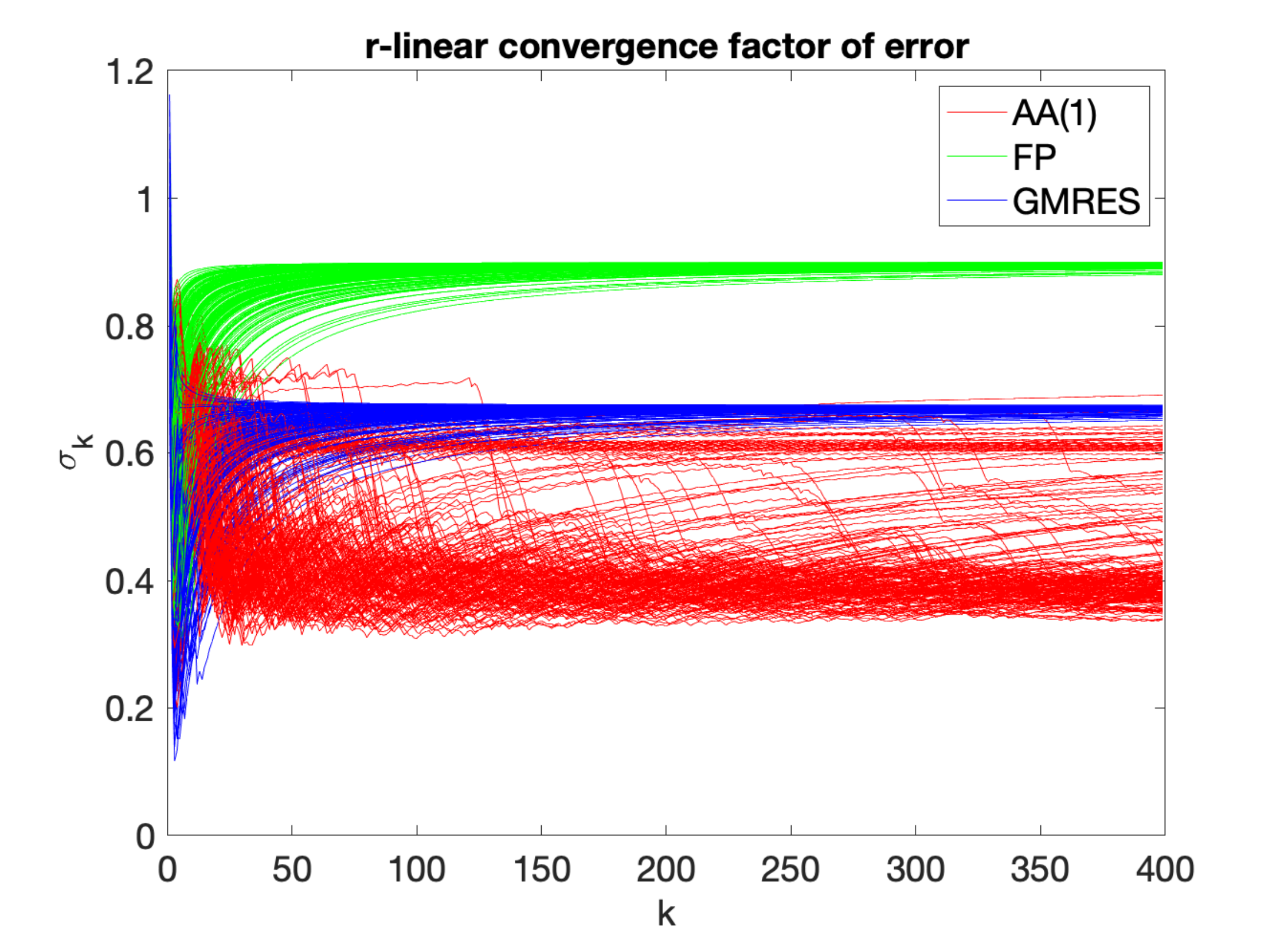}
\includegraphics[width=.49\textwidth]{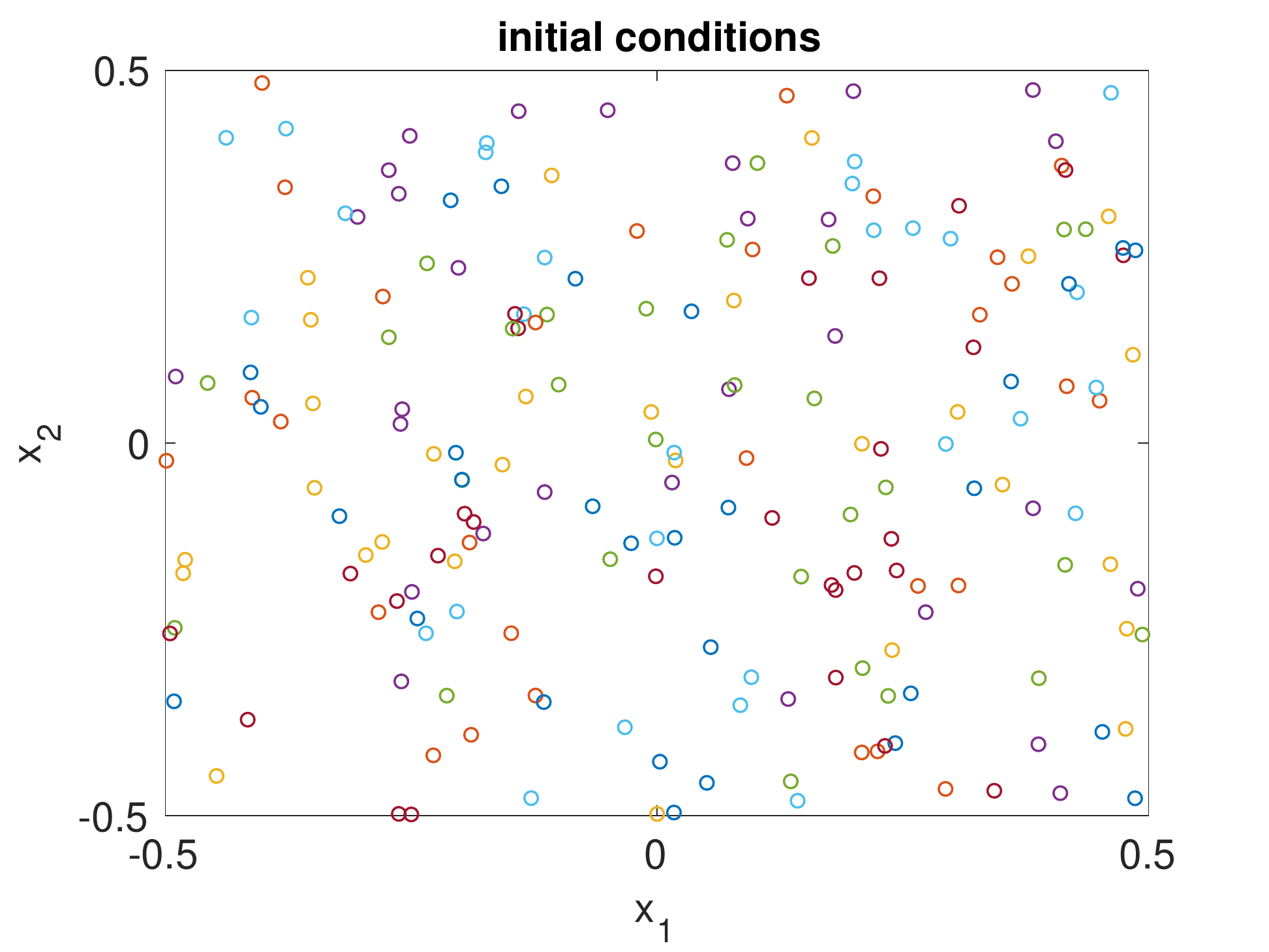}
\includegraphics[width=.325\textwidth]{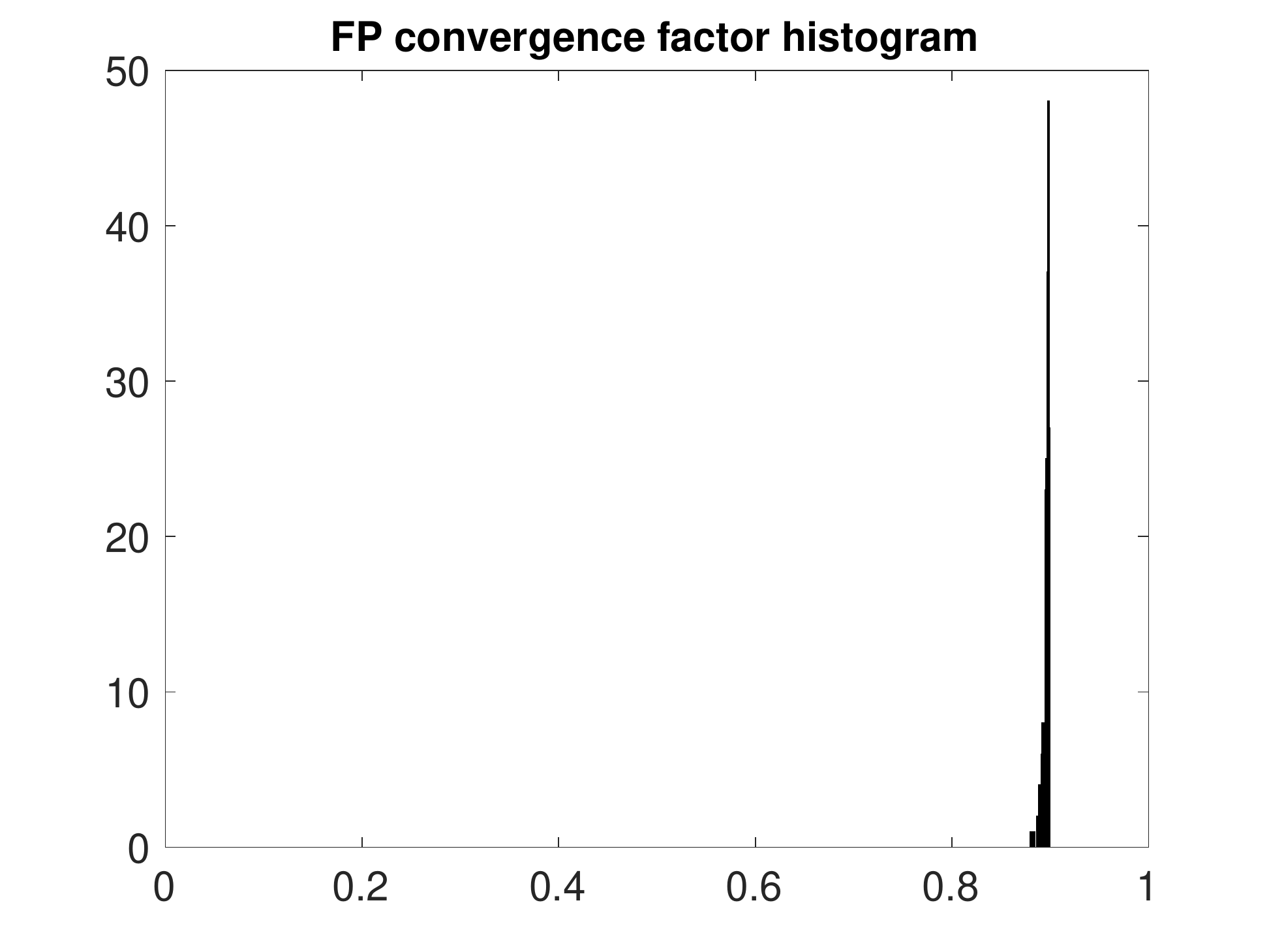}
\includegraphics[width=.325\textwidth]{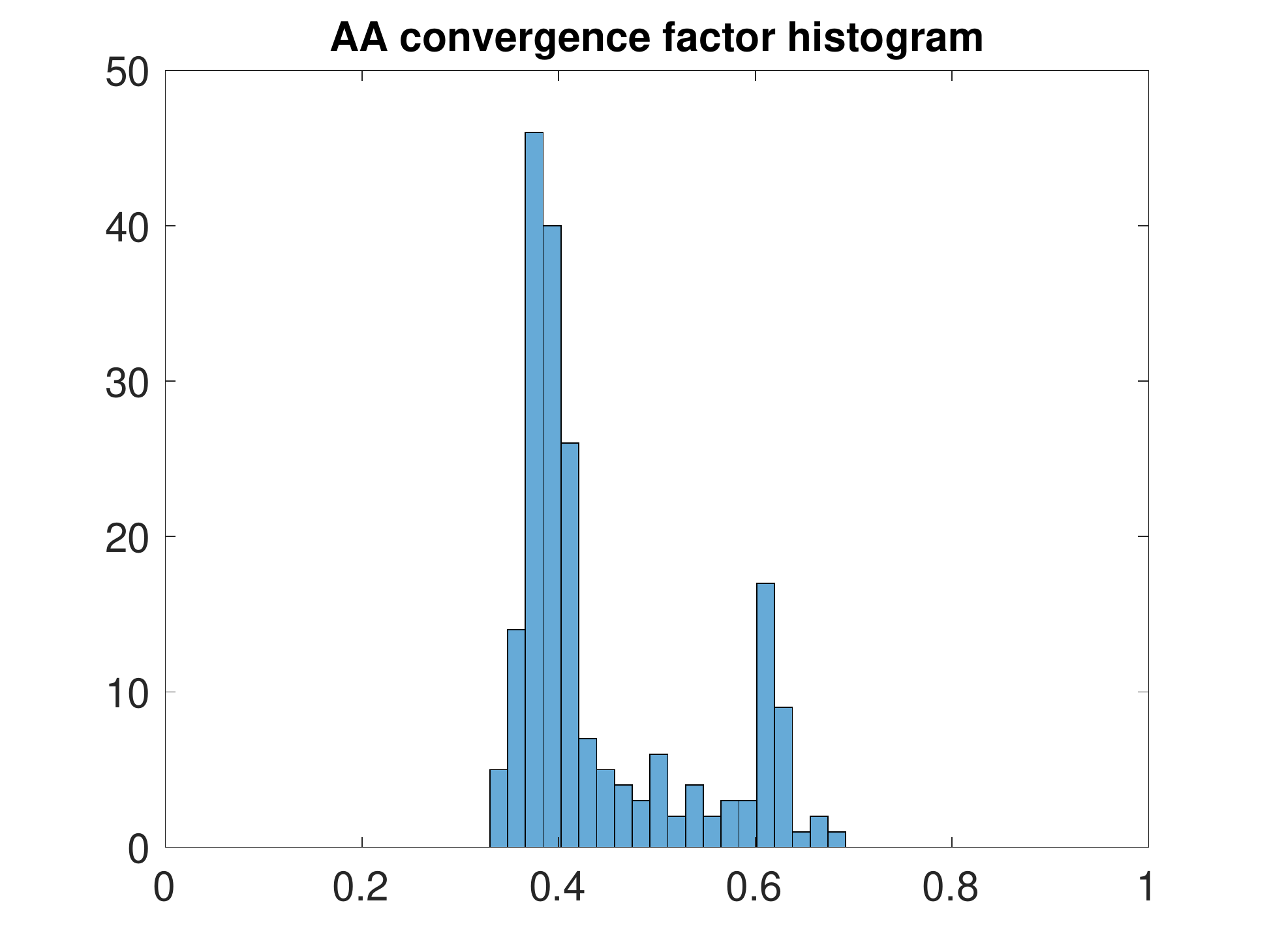}
\includegraphics[width=.325\textwidth]{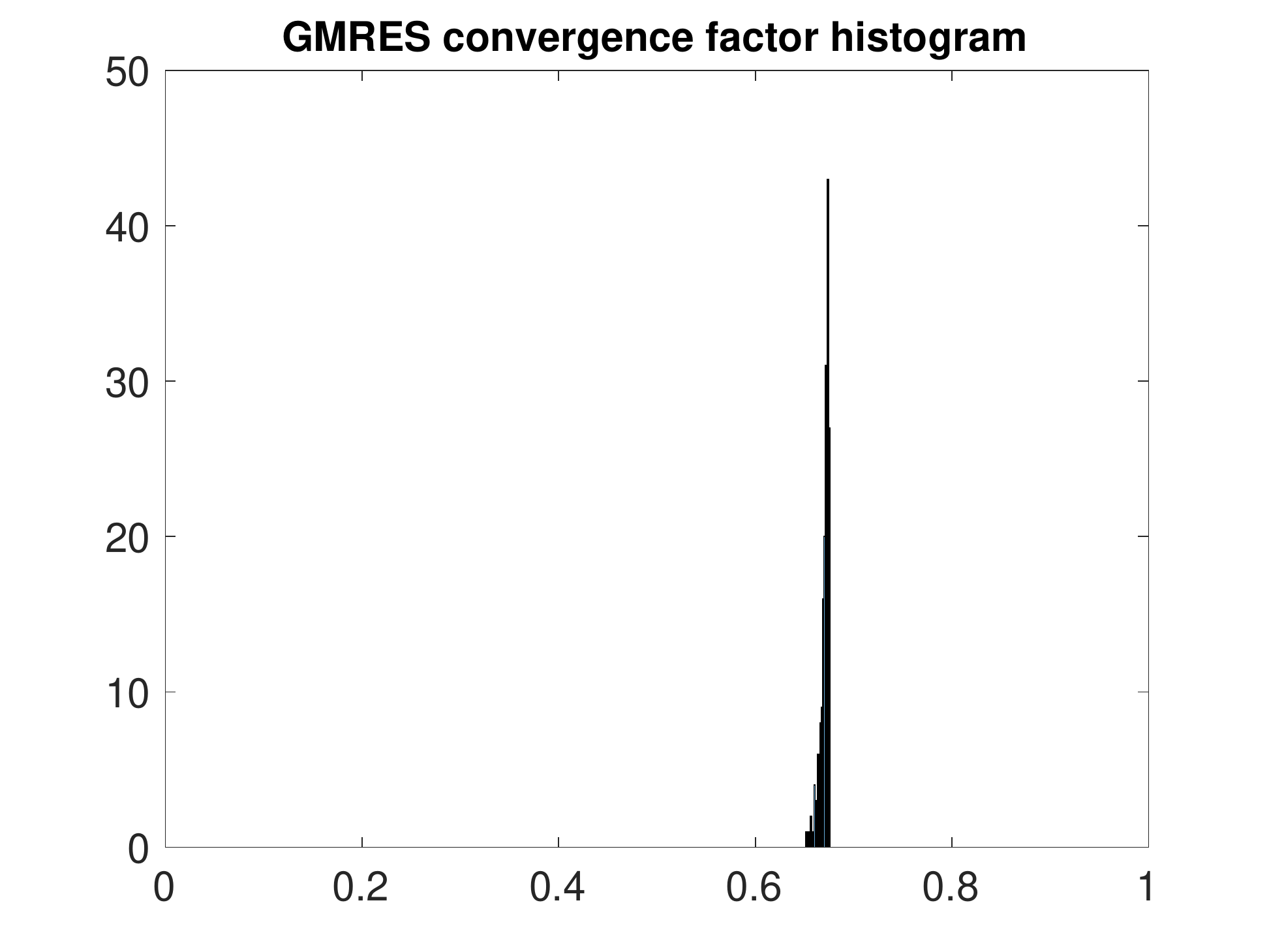}
\caption{
\cref{prob:linear200}: linear problem with $M \in \mathbb{R}^{200 \times 200}$ and $\lambda_1=0.9$, $\lambda_2=0.3$, $\lambda_3=-0.3$, and $\lambda_4=-0.3$, for 200 random initial guesses. Comparison of (windowed) AA(1) (red) with restarted GMRES(1) (blue). It can be observed that the asymptotic linear convergence factors of the AA(1) sequences strongly depend on the initial guess, but the asymptotic linear convergence factors of the restarted GMRES(1) sequences do not depend on the initial guess.} \label{prob18-gmres1}
\end{figure}

We next compare AA($m$) with a restarted version of AA($m$). In the restarted version of AA($m$), we simply restart the entire AA($m$) iteration every $m$ steps. Since AA is essentially equivalent to GMRES, this restarted AA($m$) iterations is essentially equivalent to restarted GMRES($m$) with window size $m$.

\cref{prob18-gmres1} compares AA(1) for \cref{prob:linear200} with restarted AA(1), equivalent to GMRES(1).
It is interesting to see that the asymptotic convergence factors of restarted AA(1) do not appear to depend on the initial guess.
Also, most of the sequences $\{x_k\}$ for the standard windowed AA(1) (without restart) appear to have an r-linear convergence factor that is
smaller than the r-linear convergence factor of the restarted AA(1).

\begin{figure}[h]
\centering
\includegraphics[width=.49\textwidth]{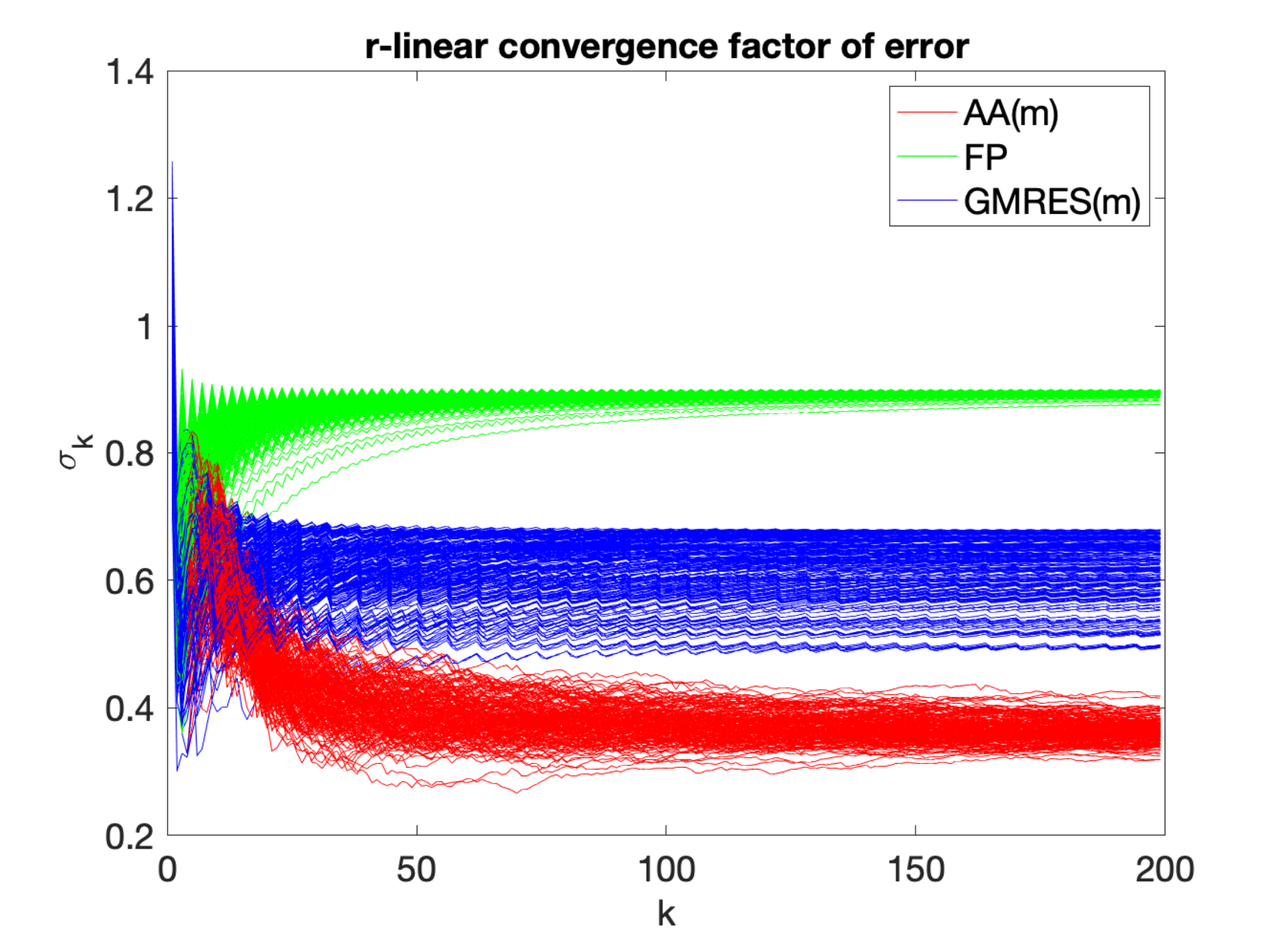}
\includegraphics[width=.49\textwidth]{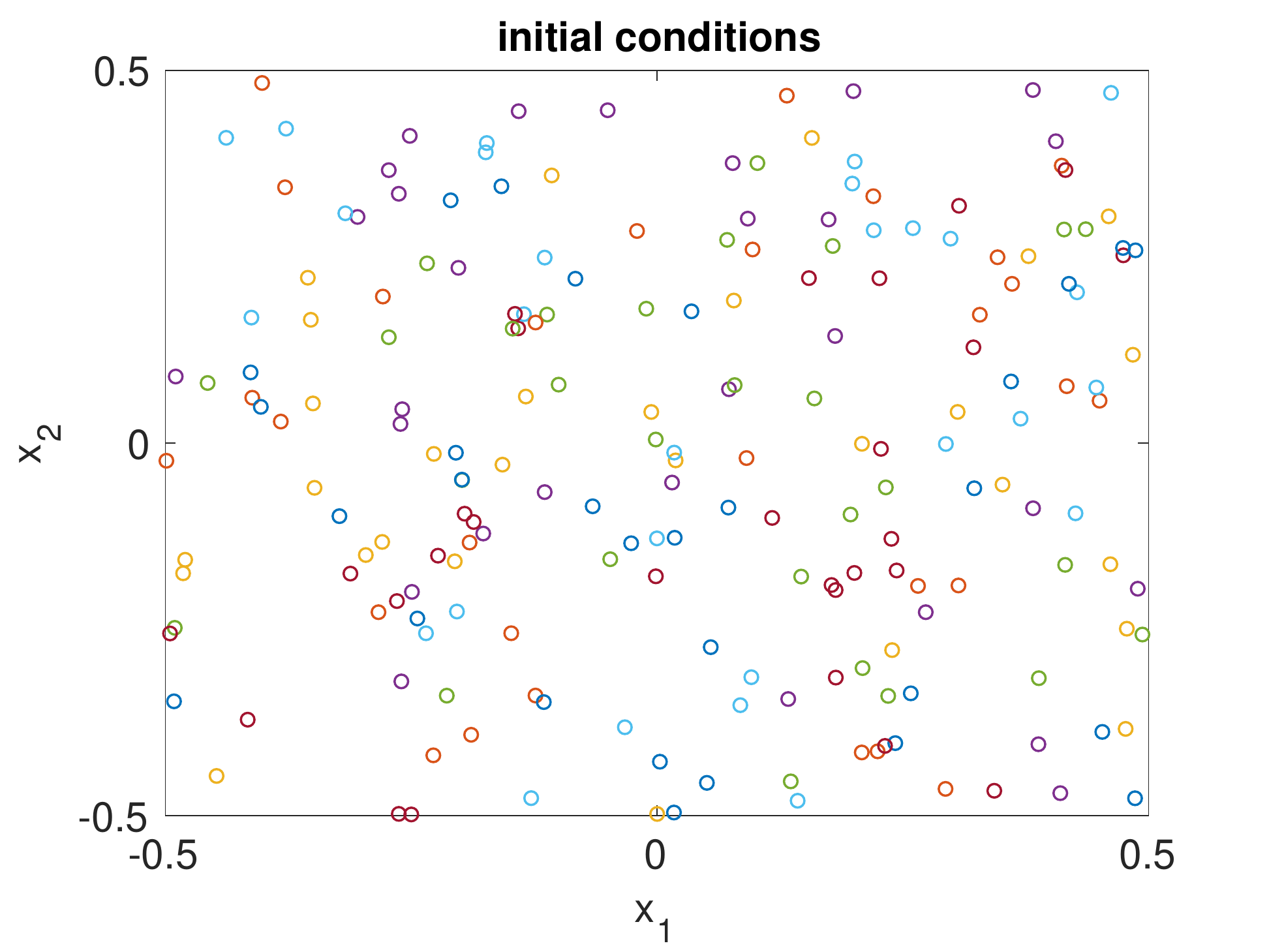}
\includegraphics[width=.32\textwidth]{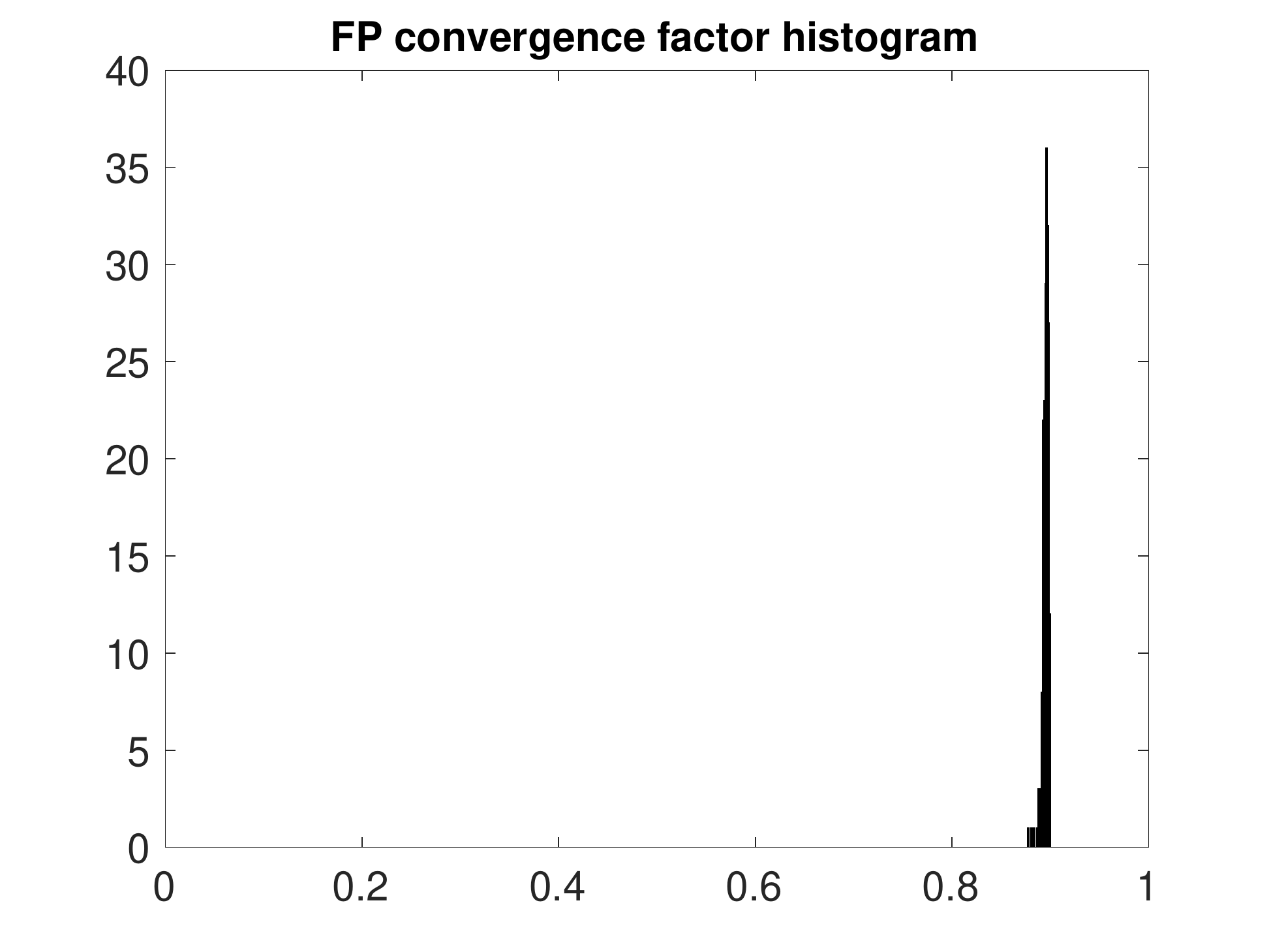}
\includegraphics[width=.32\textwidth]{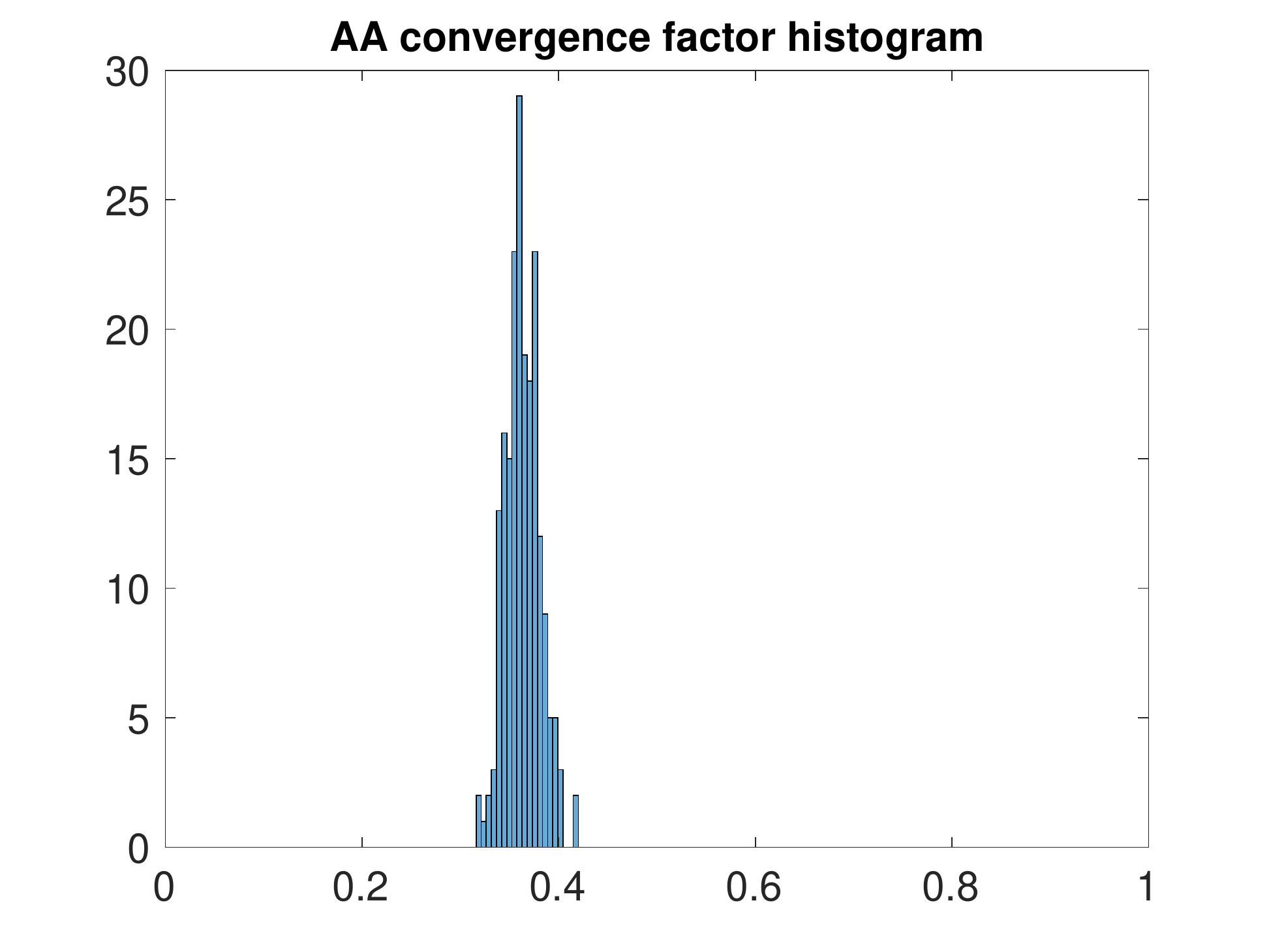}
\includegraphics[width=.32\textwidth]{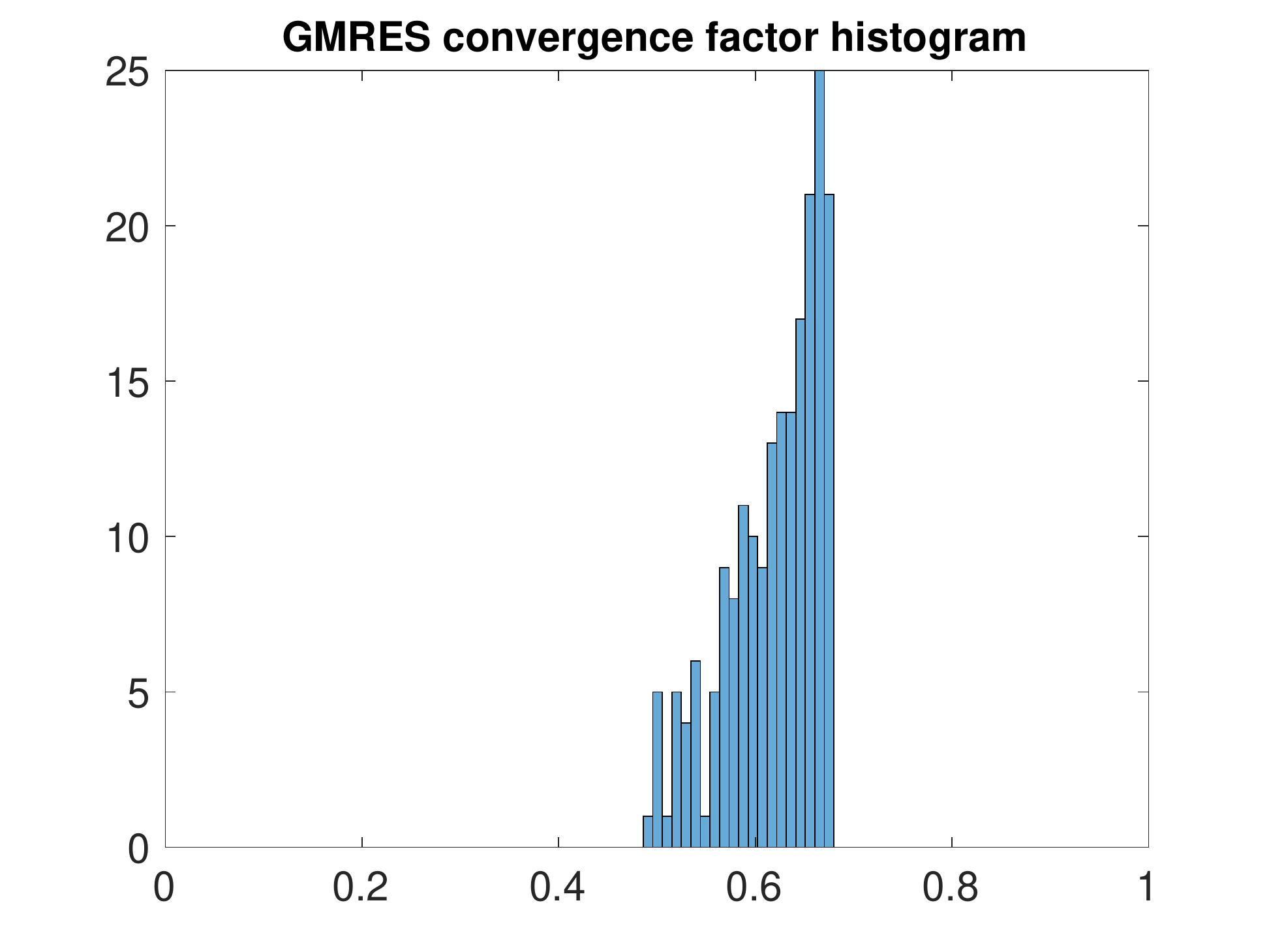}
\caption{
\cref{prob:linear200}: linear problem with $M \in \mathbb{R}^{200 \times 200}$ and $\lambda_1=0.9$, $\lambda_2=-0.9$, $\lambda_3=0.3$, and $\lambda_4=-0.3$, for 200 random initial guesses.
 Comparison of (windowed) AA(3) (red) with restarted AA(3) (blue), which is equivalent to GMRES(3). It can be observed that the asymptotic linear convergence factors of both the windowed AA(3) sequences and the restarted AA(3) sequences strongly depend on the initial guess. Windowed AA(3) converges faster than restarted AA(3).} \label{prob19-gmres3}
\end{figure}

Finally, \cref{prob19-gmres3} considers \cref{prob:linear200} with eigenvalues $\lambda_1=0.9$, $\lambda_2=-0.9$, $\lambda_3=0.7$, and $\lambda_4=-0.7$, comparing windowed AA(3) with restarted AA(3) (which is equivalent to GMRES(3)).
Interestingly, convergence factors for restarted AA(3) appear to depend strongly on the initial guess, similar to windowed AA(3), but unlike restarted AA(1) in \cref{prob18-gmres1}. We also see that windowed AA(3) generally converges faster than restarted AA(3), but this is not surprising because every iteration of windowed AA(3) uses information from three previous iterates (as soon as $k\ge3$), whereas iterations of restarted AA(3) use information from only two previous iterates on average.

The results of \cref{prob18-gmresinf,prob18-gmres1,prob19-gmres3} are interesting because they compare the asymptotic convergence speed of AA($m$) with GMRES and GMRES($m$), and the dependence of the r-linear convergence factor on the initial guess. Needless to say, these results raise many questions that require further investigation. For example, the dependence of GMRES($m$) convergence speed on the initial guess has been observed before and numerical results for small-size problems suggest dependence of GMRES(1) convergence with fractal patterns \cite{embree2003tortoise}, but as far as we know there are only limited theoretical results that explain, bound or quantify dependence of the asymptotic convergence factor on the initial guess for GMRES($m$).

\begin{figure}[h]
\centering
\includegraphics[width=.60\textwidth]{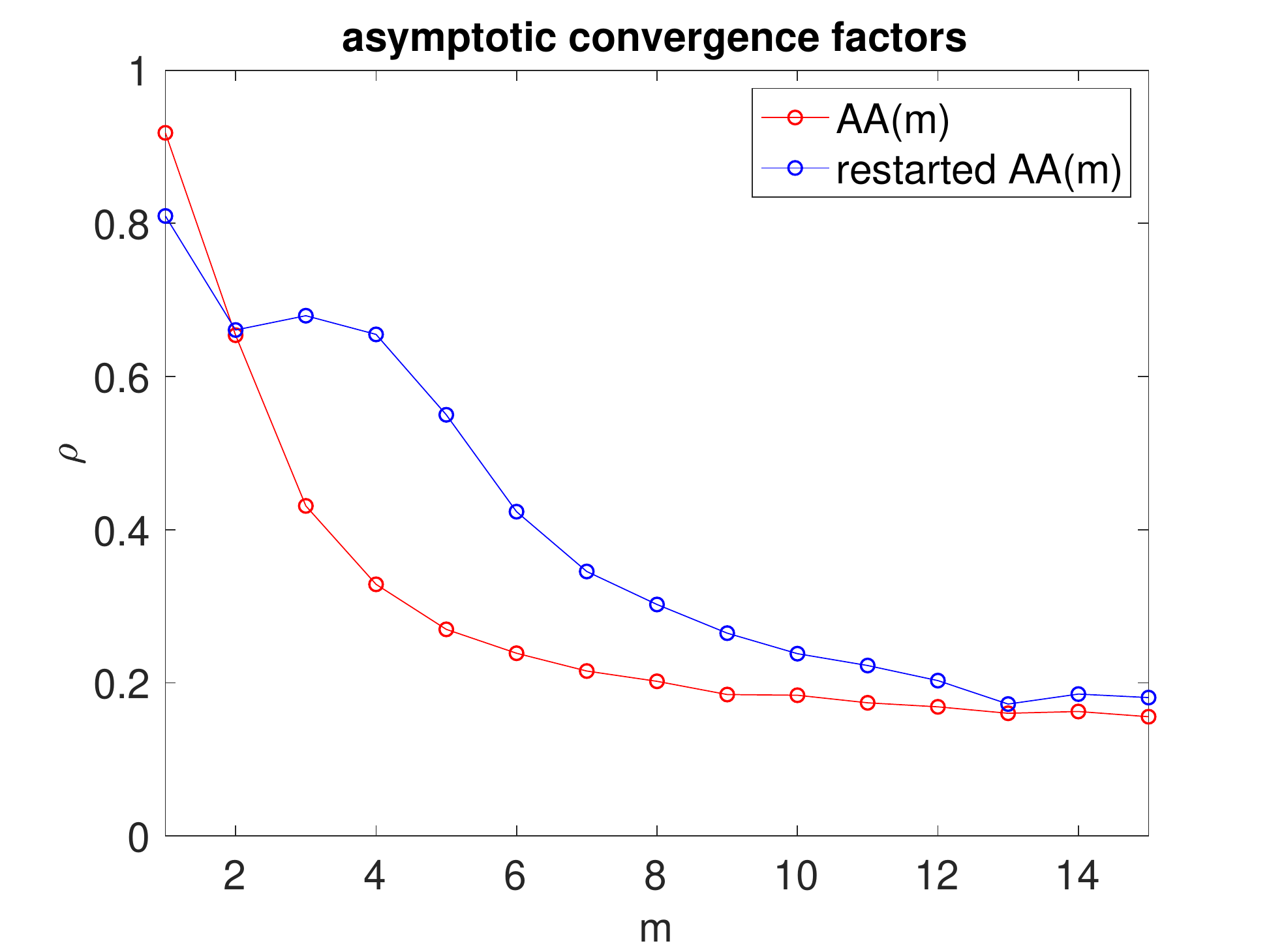}
\caption{
\cref{prob:linear200}: linear problem with $M \in \mathbb{R}^{200 \times 200}$ and $\lambda_1=0.9$, $\lambda_2=-0.9$, $\lambda_3=0.3$, and $\lambda_4=-0.3$. Comparison of worst-case asymptotic convergence factors for (windowed) AA(m) (red) and restarted AA(m) (blue), for 200 random initial guesses in each test.} \label{prob19-sweepm}
\end{figure}

We conclude our discussion of \cref{prob:linear200} with \cref{prob19-sweepm}, which shows how the worst-case r-linear asymptotic convergence factors for windowed AA($m$) and restarted AA($m$) over 200 random initial guesses depend on the window size, $m$. The results of \cref{prob19-sweepm} are for \cref{prob:linear200} with eigenvalues $\lambda_1=0.9$, $\lambda_2=-0.9$, $\lambda_3=0.7$, and $\lambda_4=-0.7$. Since there are four eigenvalues that are much greater than the cluster of 196 eigenvalues between 0.3 and 0, both the windowed and restarted AA($m$) show large gains from increasing $m$ up to $m=4$, after which the improvement tapers off. When $m$ increases beyond 5 (for windowed AA($m$)), the improvements become smaller, because the 196 eigenvalues that are smallest in magnitude are clustered; nevertheless, increasing $m$ further continues to improve the estimated $\rho_{\Psi,x^*}$. This behavior is similar to what could be expected for the well-understood behavior of GMRES without restart, which approximately takes out isolated eigenvalues one-by-one. As before for $m=3$ in \cref{prob19-gmres3}, windowed AA($m$) converges faster than restarted AA($m$) with the same $m$, as expected. However, this does point to an advantage of windowed AA($m$) over restarted AA($m$), because the memory requirements for the two algorithms are the same, and the additional amount of work per step for windowed AA($m$) is usually small because the small least-squares problems solved in AA($m$) tend to be inexpensive relative to the evaluation of the $q(x)$ iteration function.

\section{Conclusion}
\label{sec:disc}

In this paper, we have investigated the continuity and differentiability properties of the iteration function $\Psi(\boldsymbol{z})$ and acceleration coefficient function $\boldsymbol{\beta}(\boldsymbol{z})$ for AA($m$), Andersen acceleration with window size $m$. We have established, for window size $m=1$ and $m>1$, the continuity and Gateaux-differentiability of $\Psi(\boldsymbol{z})$ at the fixed point $\boldsymbol{z}^*$, despite $\boldsymbol{\beta}(\boldsymbol{z})$ not being continuous at $\boldsymbol{z}^*$. These findings shed light on remarkable properties of the asymptotic converge of AA($m$) that we have revealed in numerical experiments, for linear and nonlinear problems. We find that AA($m$) sequences converge r-linearly but their r-linear converge factors depend on the initial guess on a set of nonzero measure, which is consistent with the non-differentiability of $\Psi(\boldsymbol{z})$ at $\boldsymbol{z}^*$. The discontinuity of $\boldsymbol{\beta}(\boldsymbol{z})$ at $\boldsymbol{z}^*$ is consistent with the observed oscillatory behaviour of $\boldsymbol{\beta}^{(k)}$ as $\{x_k\}$ converges to $x^*$.
In exact arithmetic, the rank-deficient case is handled properly by the pseudo-inverse formula of \cref{eq:AAm-beta-form-pseudo}
which computes the minimum-norm solution when the system is singular, and our analysis shows that, while
$\boldsymbol{\beta}(\boldsymbol{z})$ is not continuous at $\boldsymbol{z}^*$, $\Psi(\boldsymbol{z})$ is
continuous and Gateaux-differentiable at $\boldsymbol{z}^*$ so the discontinuity of $\boldsymbol{\beta}(\boldsymbol{z})$ does
not preclude convergence of $\{x_k\}$ to $x^*$.

It is interesting to also relate the findings of this paper to the results from \cite{desterck2020,wang2020} on asymptotic convergence for a stationary version of AA($m$). While, as we have seen in this paper, the asymptotic convergence factor $\rho_{\Psi,x^*}$ of AA($m$) in iteration \cref{eq:AA-fixed-point} cannot easily be computed since $\Psi(\boldsymbol{z})$ is not differentiable, \cite{desterck2020,wang2020} consider a stationary version of AA($m$) where the AA coefficients $\beta^{(k)}_i$ are fixed over all iterations $k$. With fixed coefficients $\beta_i$ in \cref{eq:AA-fixed-point}, $\Psi(\boldsymbol{z})$ in iteration \cref{eq:AA-fixed-point} is differentiable and the linear asymptotic convergence factor of the stationary AA iteration is computable as  $\rho_{\Psi,x^*}=\rho(\Psi'(x^*))$. This enables choosing the stationary coefficients $\beta_i$ that minimize $\rho(\Psi'(x^*))$, if $x^*$ and $q'(x)$ are known. This approach is used in \cite{desterck2020,wang2020} to provide insight in the convergence improvement that results from the optimal stationary AA($m$) iteration, based on how AA($m$) improves the eigenvalue spectrum of $q'(x^*)$. Empirical results in \cite{desterck2020,wang2020} for AA($m$) acceleration of large canonical tensor decompositions by the alternating least-squares method, and of large machine learning optimization problems solved by the alternating direction method of multipliers, show that the convergence improvement obtained by the stationary AA($m$) iteration is similar to the convergence improvement provided by the non-stationary AA($m$). The work in \cite{desterck2020,wang2020}, however, as well as the fixed-point analysis of AA($m$) presented in this paper, leave open the question of determining $\rho_{\Psi,x^*}$ for the non-stationary AA($m$) that is widely used in science and engineering applications.


\appendix

\section{Proof of \cref{prop:Psix*}}
\label{app:proof-psi-lipsch}

We first prove the proposition for the linear case, obtaining an explicit global Lipschitz constant.
We then prove local Lipschitz continuity for the nonlinear case.

Consider $\boldsymbol{z}=\begin{bmatrix} x\\ x\end{bmatrix}$ and
$\boldsymbol{d}=\begin{bmatrix} d_1\\ d_2\end{bmatrix}$.

In the linear case, when $d_1\neq d_2$ we obtain from \cref{eq:Psilin}
\begin{equation}\label{eq:general-z-d-form}
 \Psi(\boldsymbol{z}+\boldsymbol{d}) =  \begin{bmatrix}
  (I-A)(x+d_1)+b- \ds \frac{(A(x+d_1)-b)^TA(d_1-d_2)}{(d_1-d_2)^TA^TA(d_1-d_2)}(I-A)(d_1-d_2)\\
   x+d_1
  \end{bmatrix},
 \end{equation}
and when $d_1= d_2$ we get from \cref{eq:form-Psi-x=y-linear}
\begin{equation}
   \Psi(\boldsymbol{z}+\boldsymbol{d})
   =\begin{bmatrix}
  q(x+d_1) \\
   x+d_1
  \end{bmatrix}=\begin{bmatrix}
  (I-A)(x+d_1)+b \\
   x+d_1
  \end{bmatrix}.
\end{equation}
We consider $\|\Psi(\boldsymbol{z}^*+\boldsymbol{d})-\Psi(\boldsymbol{z}^*) \|$ for two cases:
\begin{itemize}
\item  If $d_1\neq d_2$,
\begin{align*}
\|\Psi(\boldsymbol{z}^*+\boldsymbol{d})-\Psi(\boldsymbol{z}^*) \|&= \left\|\begin{bmatrix}
(I-A)d_1- (I-A)(d_1-d_2) \ds \frac{(Ad_1)^TA(d_1-d_2)}{(d_1-d_2)^TA^TA(d_1-d_2)} \\
d_1\end{bmatrix} \right\|, \\
&= \left\|\begin{bmatrix} (I-A)d_1-(A^{-1}-I) \ds \frac{A(d_1-d_2)}{\|A(d_1-d_2)\|} \ds \frac{(A(d_1-d_2))^T}{\|A(d_1-d_2)\|} Ad_1\\ d_1 \end{bmatrix} \right\|, \\
&\leq  \|(I-A)d_1-(A^{-1}-I) \ds \frac{A(d_1-d_2)}{\|A(d_1-d_2)\|} \ds \frac{(A(d_1-d_2))^T}{\|A(d_1-d_2)\|} Ad_1\|+ \|d_1\|, \\
& \leq   \|I-A\| \|d_1\|+\|A^{-1}-I\| \|\ds \frac{A(d_1-d_2)}{\|A(d_1-d_2)\|} \ds \frac{(A(d_1-d_2))^T}{\|A(d_1-d_2)\|}\| \|A d_1\| + \|d_1\|,\\
& \leq   \left(\|I-A\| +\|A^{-1}\|\|I-A\| \|A\|+1\right) \|d_1\|, \\
& \leq   \left( ( \|A^{-1}\| \|A\| + 1 ) \|I-A\|+1\right) \|\boldsymbol{d}\|.
\end{align*}
\item If $d_1=d_2=d$,
\begin{align*}
  \|\Psi(\boldsymbol{z}^*+\boldsymbol{d})-\Psi(\boldsymbol{z}^*) \| &= \left\|\begin{bmatrix}
  q(x^*+d)-q(x^*) \\  x^*+d-x^*  \end{bmatrix}\right \|  = \left\|\begin{bmatrix}
  (I-A)d \\  d  \end{bmatrix}\right \|,
\\
  &\leq (\|I-A\|)\|d\| +\|d\|,\\
  &\leq  (\|I-A\|+1) \|d\|,\\
  &\leq  (\|I-A\|+1) \|\boldsymbol{d}\|.
\end{align*}
\end{itemize}
Thus, for all $\boldsymbol{d}$,
\begin{equation*}
 \|\Psi(\boldsymbol{z}^*+\boldsymbol{d})-\Psi(\boldsymbol{z}^*) \| \leq L \|\boldsymbol{d}\|,
\end{equation*}
where $L=( \|A^{-1}\| \|A\| + 1 ) \|I-A\|+1$.
This means that $\Psi(\boldsymbol{z})$ is Lipschitz continuous at $\boldsymbol{z}^*$.


We now give the proof of local Lipschitz continuity for the nonlinear case.

Since we assume that $r'(x)$ is nonsingular and continuous for $x$ sufficiently close to $x^*$,
the smallest eigenvalue of  $r'^T(x)r'(x)$ is bounded below when $x$ is sufficiently close to $x^*$:
%
there exist $\delta_0>0$ and $c_r>0$  such that
\begin{equation}\label{eq:r'r-lower-bound}
 0< c_r^2 \leq \lambda_{\min} (r'(x^*+d)^T r'(x^*+d)), \, \forall d\in B(0,\delta_0):=\{d: \|d\|< \delta_0 \}.
 \end{equation}
%
Consider $\boldsymbol{z}^*=\begin{bmatrix} x^*\\ x^*\end{bmatrix}$ and $\boldsymbol{d}=\begin{bmatrix} d_1\\ d_2\end{bmatrix}$.
We first prove that $\Psi(\boldsymbol{z})$ is Lipschitz continuous at $\boldsymbol{z}^*$.

Note that
\begin{equation}\label{eq:q-taylor-exp}
q(x^*+d)=q(x^*)+ q'(x^*) d +Q_1(d)d \quad  \text{with}\quad \lim_{d \rightarrow 0} Q_1( d)=0.
\end{equation}
Let $L_1=\|q'(x^*)\|>0$. Then, there exists $\delta_1>0$ such that
\begin{equation}\label{eq:epsilon-definition-limit}
  \|Q_1(d)\|\leq L_1 \quad \forall d\in B(0,\delta_1).
\end{equation}
We first consider $d_1=d_2=d$. For  $d\in B(0,\delta_1)$,  from \cref{eq:Psi1,eq:beta1} we have
\begin{align*}
 \| \Psi(\boldsymbol{z}^*+\boldsymbol{ d})- \Psi(\boldsymbol{z}^*)\| &= \left\|\begin{bmatrix} q(x^*+d)-q(x^*) \\ d \end{bmatrix} \right\|,\\
     &\leq \|q(x^*+d)-q(x^*)\| +\|d\|,\\
     & \leq (\|q'(x^*)\|+ L_1)\|\boldsymbol{d}\| +\|\boldsymbol{d}\|,\\
     & \leq (2L_1+1)\|\boldsymbol{d}\|.
\end{align*}
When $d_1\neq d_2$,  from \cref{eq:Psi1,eq:beta1}, we have
\begin{align*}
  \| \Psi(\boldsymbol{z}^*+\boldsymbol{ d})-\Psi(\boldsymbol{z}^*)\| & =\left\| \begin{bmatrix}
   q(x^*+d_1)+\beta(\boldsymbol{z}^*+\boldsymbol{ d})\big(q(x^*+d_1)-q(x^*+d_2)\big)-q(x^*)\\
   x^*+d_1-x^*
   \end{bmatrix}\right\|,\\
   &\leq  \underbrace{\|q(x^*+d_1)-q(x^*)\|}_{=:E_1} + \underbrace{\|\beta(\boldsymbol{z}^*+\boldsymbol{ d})\big(q(x^*+d_1)-q(x^*+d_2)\big)\|}_{=:E_2}+\|\boldsymbol{d}\|.
\end{align*}
In the following we estimate $E_1$ and $E_2$ separately. From \cref{eq:q-taylor-exp,eq:epsilon-definition-limit},
\begin{equation}\label{eq:E1-term}
E_1\leq (\|q'(x^*)\|+L_1)\|d_1\|=2L_1\|d_1\|, \quad \forall d_1 \in B(0,\delta_1).
\end{equation}


Next, we estimate $E_2$.  Using  \cref{eq:Psi1,eq:beta1}, and $r(x)=x-q(x)$, we have
\begin{align*}
E_2&=\left \|\beta(\boldsymbol{z}^*+\boldsymbol{ d})\big(q(x^*+d_1)-q(x^*+d_2)\big)\right\|,\\
& =\left \|\frac{r(x^*+d_1)^T \big(r(x^*+d_1)-r(x^*+d_2)\big)}{\|r(x^*+d_1)-r(x^*+d_2)\|^2} \big(q(x^*+d_1)-q(x^*+d_2)\big)\right \|,\\
& = \left \|\frac{\big(q(x^*+d_1)-q(x^*+d_2)\big) \big(r(x^*+d_1)-r(x^*+d_2)\big)^T}{\|r(x^*+d_1)-r(x^*+d_2)\|^2}r(x^*+d_1) \right\|,\\
& = \left \|\frac{\big(r(x^*+d_1)-r(x^*+d_2)-(d_1-d_2)\big) \big(r(x^*+d_1)-r(x^*+d_2)\big)^T}{\|r(x^*+d_1)-r(x^*+d_2)\|^2}r(x^*+d_1) \right\|,\\
&\leq \left \|\frac{(r(x^*+d_1)-r(x^*+d_2)) (r(x^*+d_1)-r(x^*+d_2))^T}{\|r(x^*+d_1)-r(x^*+d_2)\|^2} \right \|\,\big\|r(x^*+d_1)\big\| \\
& \quad  +  \left \|\frac{(d_1-d_2)(r(x^*+d_1)-r(x^*+d_2))^T}{\|r(x^*+d_1)-r(x^*+d_2)\|^2}\right \|\,\big\|r(x^*+d_1)\big\|, \\
&\leq \|r(x^*+d_1)\| +  \underbrace{\left \|\frac{(d_1-d_2)(r(x^*+d_1)-r(x^*+d_2))^T}{\|r(x^*+d_1)-r(x^*+d_2)\|^2}\right \|}_{=:W}\, \big\|r(x^*+d_1)\big\|,\\
&= (1+W)\|r(x^*+d_1)\|.
\end{align*}
Using $r(x^*)=0$, we have
\begin{equation}\label{eq:dif-q1-q2}
r(x^*+d_1)=r'(x^*)d_1+Q_2(d_1)d_1 \quad \text{with} \quad  \lim_{d \rightarrow 0} Q_2( d_1)=0.
\end{equation}
Let $L_2=\|r'(x^*)\|$. Then there exists $\delta_2>0$ such that
\begin{equation}\label{eq:epsilon-definition-limit-r}
  \|Q_2(d_1)\|\leq L_2 \quad \text{for} \quad \forall d_1\in B(0,\delta_2).
\end{equation}
Using \cref{eq:dif-q1-q2} and \cref{eq:epsilon-definition-limit-r}, we have
\begin{equation}\label{eq:E2-estimate}
E_2 \leq (1+W)(\|r'(x^*)\| + L_2) \|d_1\|=2(1+W)L_2\|d_1\|, \quad \forall d_1\in B(0,\delta_2).
\end{equation}
Next, we estimate $W$.  Since,
\begin{equation}\label{eq:q-d1-d2-taylor-exp}
r(x^*+d_1)-r(x^*+d_2)=r'(x^*+d_2)(d_1-d_2) +Q_3(d_1-d_2)(d_1-d_2)
\end{equation}
with $\lim_{d_1-d_2 \rightarrow 0} Q_3( d_1-d_2)=0$, we have
\begin{align*}
W&= \left \|\frac{(d_1-d_2)(r(x^*+d_1)-r(x^*+d_2))^T}{\|r(x^*+d_1)-r(x^*+d_2)\|^2}\right \|,\\
  & \leq  \frac{\|d_1-d_2\|}{\|r(x^*+d_1)-r(x^*+d_2)\|},\\
  & =  \frac{\|d_1-d_2\|}{\|r'(x^*+d_2)(d_1-d_2) +Q_3(d_1-d_2)(d_1-d_2)\|},\\
  & = \frac{1}{\|r'(x^*+d_2)(d_1-d_2)/\|d_1-d_2\| +Q_3(d_1-d_2)(d_1-d_2)/\|d_1-d_2\|\|}.
\end{align*}
In the following, we show that $T =\|r'(x^*+d_2)(d_1-d_2)/\|d_1-d_2\| +Q_3(d_1-d_2)(d_1-d_2)/\|d_1-d_2\|\|$ is bounded from below.
Let $L_3= \frac{c_r}{2}$ where $c_r$ is defined in  \cref{eq:r'r-lower-bound}. From \cref{eq:q-d1-d2-taylor-exp}, there exists $\delta_3$ such that
\begin{equation*}
  \|Q_3(d_1-d_2)\|\leq L_3, \forall d_1-d_2\in B(0,\delta_3).
\end{equation*}
It follows that
\begin{equation}\label{eq:Q3-d1d2}
  \Big \|Q_3(d_1-d_2)(d_1-d_2)/\|d_1-d_2\| \Big\|\leq \|Q_3(d_1-d_2)\|\leq \frac{c_r}{2}.
\end{equation}
Using \cref{eq:r'r-lower-bound} and \cref{eq:Q3-d1d2} with $d_2\in B(0,\delta_0)$, we have, since $\|a+b\|\geq \big|\|a\|-\|b\|\big|$,
\begin{align*}
T &=\Big \|r'(x^*+d_2)(d_1-d_2)/\|d_1-d_2\| +Q_3(d_1-d_2)(d_1-d_2)/\|d_1-d_2\| \Big \| \\
 & \geq   \Big | \big\|r'(x^*+d_2)(d_1-d_2)/\|d_1-d_2\| \big\| - \big \|Q_3(d_1-d_2)(d_1-d_2)/\|d_1-d_2\| \big\| \Big |,\\
& \geq  \Big | \sqrt{\lambda_{\min} \big(r'(x^*+d_2)^Tr'(x^*+d_2)\big)}  -  \|Q_3(d_1-d_2)(d_1-d_2)/\|d_1-d_2\| \Big |,\\
& \geq   c_r -\frac{c_r}{2} ,\\
& =\frac{c_r}{2},
\end{align*}
where in the second-to-last inequality, we use the following property: for any Hermitian matrix $H$,
\begin{equation*}
  x^T H x\geq \lambda_{\min}(H), \quad \text{where} \quad x^Tx=1.
\end{equation*}
Thus,
\begin{equation}\label{eq:estimate-W}
 W =\frac{1}{T}\leq \frac{2}{c_r}.
\end{equation}
To use \cref{eq:E1-term,eq:E2-estimate,eq:estimate-W} together, we need to find a $\delta>0$ such that $\boldsymbol{d}=\begin{bmatrix} d_1 \\ d_2\end{bmatrix}\in B(0,\delta)$ satisfies $d_1\in B(0,\delta_i)$ with $i=1,2$, $d_2\in B(0,\delta_0)$, and $d_1-d_2\in B(0,\delta_3)$. Let  $\delta=\frac{1}{2}\min\big \{ \delta_0,\delta_1,\delta_2,\delta_3  \big \}$  and  $\boldsymbol{d}=\begin{bmatrix} d_1 \\ d_2\end{bmatrix}\in B(0,\delta)$. Then  we have $d_1, d_2 \in B(0,\delta)$ and $d_1-d_2\in B(0, 2\delta)$. It follows that $d_1\in B(0,\delta_i)$ with $i=1,2$, $d_2\in B(0,\delta_0)$, and $d_1-d_2 \in B(0,\delta_3)$.

Now, using \cref{eq:E1-term,eq:E2-estimate,eq:estimate-W}, for $\boldsymbol{d}\in B(0,\delta)$,  we have the following estimate:
\begin{align*}
\| \Psi(\boldsymbol{z}^*+\boldsymbol{ d})-\Psi(\boldsymbol{z}^*)\| & \leq E_1+E_2+ \|\boldsymbol{d}\|,\\
 & \leq 2L_1\|d_1\| +  2(1+W)L_2 \|d_1\|+ \|\boldsymbol{d}\|,\\
 &\leq \big( 2 \|q'(x^*)\|  +  2(1+2/c_r) \|r'(x^*)\| +1\big)\|\boldsymbol{d}\|,\\
 &\leq \big( 2 \|I-r'(x^*)\|  +  2(1+2/c_r) \|r'(x^*)\| +1\big)\|\boldsymbol{d}\|,\\
 & \leq \big( 3+ (4+4/c_r)\|r'(x^*)\|\big)\|\boldsymbol{d}\|.
\end{align*}
Thus, for all $\boldsymbol{d}\in B(0,\delta)$,
\begin{equation*}
 \|\Psi(\boldsymbol{z}^*+\boldsymbol{d})-\Psi(\boldsymbol{z}^*) \| \leq L \|\boldsymbol{d}\|,
\end{equation*}
where $L=3+(4+4/c_r)\|r'(x^*)\|$. This means that $\Psi(\boldsymbol{z})$ is locally Lipschitz continuous at $\boldsymbol{z}^*$.

\section{Proof of \cref{thm:x=y-directional-dif}}
\label{app:proof-directional}

Consider $\boldsymbol{z}^*=\begin{bmatrix} x^*\\ x^*\end{bmatrix}$, $\boldsymbol{d}=\begin{bmatrix} d_1\\ d_2\end{bmatrix}$ and scalar $h\neq 0$.

When $d_1=d_2$, from \cref{eq:Psi1} and \cref{eq:beta1}, we have
\begin{equation*}
\Psi(\boldsymbol{z}^*+h\boldsymbol{d})-\Psi(\boldsymbol{z}^*) =
\begin{bmatrix}
 q(x^*+hd_1)-q(x^*)\\
   x^*+hd_1-x^*
  \end{bmatrix}.
\end{equation*}
Since
\begin{equation}\label{eq:taylor-exp-two-term}
q(x^*+ hd_1)=q(x^*)+ q'(x^*)hd_1 +Q(hd_1) hd_1\quad \text{with} \quad  \lim_{h \rightarrow 0} Q(hd_1)=0,
\end{equation}
\begin{align*}
 \mathfrak{ D} \Psi(\boldsymbol{z}^*,\boldsymbol{ d})&= \lim_{h\downarrow 0}\ds \frac{\Psi(\boldsymbol{z}^*+h\boldsymbol{d})-\Psi(\boldsymbol{z}^*)}{h},\\
 & = \lim_{h\downarrow 0}\ds  \frac{1}{h}\begin{bmatrix}
        q'(x^*)hd_1 +Q(hd_1) hd_1 \\
      hd_1
    \end{bmatrix},\\
  &=\begin{bmatrix} q'(x^*)d_1\\ d_1\end{bmatrix},\\
  & = \begin{bmatrix} M d_1\\ d_1\end{bmatrix},\\
  & = \begin{bmatrix} (1+0)M & 0\cdot M \\
                      I  & 0
    \end{bmatrix} \boldsymbol{d}.
\end{align*}
When $d_1\neq d_2$, we obtain from \cref{eq:Psi1} and \cref{eq:beta1},
\begin{align*}
\Psi(\boldsymbol{z}^*+h\boldsymbol{d})-\Psi(\boldsymbol{z}^*) &=
\begin{bmatrix}
  q(x^*+hd_1) +\beta(\boldsymbol{z}^*+h\boldsymbol{d}) (q(x^*+hd_1)-q(x^*+hd_2))-q(x^*) \\
    x^*+hd_1- x^*
\end{bmatrix},\\
& = \begin{bmatrix}
A_1+ A_2\\
hd_1
\end{bmatrix},
\end{align*}
where $A_1=q(x^*+hd_1)-q(x^*)$ and
\begin{align*}
 A_2&=\beta(x^*+hd_1,x^*+hd_2)(q(x^*+hd_1)-q(x^*+hd_2)),\\
    &=\frac{-r(x^*+hd_1)^T \big(r(x^*+hd_1)-r(x^*+hd_2)\big)}{\|r(x^*+hd_1)-r(x^*+hd_2)\|^2}\big(q(x^*+hd_1)-q(x^*+hd_2)\big).
\end{align*}
Using \cref{eq:taylor-exp-two-term}, we have
\begin{equation}\label{eq:limit-A1-term}
\lim_{h\downarrow 0} \ds \frac{A_1}{h}  =q'(x^*)d_1=Md_1.
\end{equation}
Next, we simplify $A_2$. First,
\begin{equation}\label{eq:q(x)-expansion-2}
  q(x^*+hd_2)-q(x^*) = q'(x^*) hd_2 + Q(hd_2)(hd_2) \quad \text{with}\quad  \lim_{h\rightarrow 0} Q(hd_2)=0.
\end{equation}
From  \cref{eq:taylor-exp-two-term} and \cref{eq:q(x)-expansion-2}, we have
\begin{equation*}
q(x^*+hd_1)-q(x^*+hd_2) = q'(x^*)(d_1-d_2) h + Q(hd_1)(hd_1)+Q(hd_2)(hd_2)=:P,
\end{equation*}
and
\begin{align*}
r(x^*+hd_1) &=x^*+hd_1-q(x^*+hd_1),\\
            &=x^*+hd_1-(q(x^*)+q'(x^*)hd_1 +Q(hd_1)(hd_1)),\\
            &= h(I-q'(x))d_1-Q(hd_1)(hd_1), \\
            & =h Ad_1-Q(hd_1)(hd_1).
\end{align*}
Furthermore,
\begin{align*}
r(x^*+hd_1)-r(x^*+hd_2)&=h Ad_1- Q(hd_1)(hd_1))-\big(h Ad_1- Q(hd_1)(hd_1)\big), \\
&= hA(d_1-d_2) - Q(hd_1)(hd_1)+Q(hd_2)(hd_2).
\end{align*}
Thus,
\begin{align*}
  \lim_{h\downarrow 0} \ds \frac{A_2}{h}  & =  \ds \frac{-\big( hA d_1 -Q(hd_1)(hd_1)\big)^T ( hA(d_1-d_2)-Q(hd_1)(hd_1)+Q(hd_2)(hd_2))P }{ h\| hA(d_1-d_2) +Q(hd_1)(hd_1)+Q(hd_2)(hd_2)\|^2},\\
&= \ds \frac{-(Ad_1)^TA(d_1-d_2) M(d_1-d_2)}{(A(d_1-d_2))^TA(d_1-d_2))},\\
&= -\ds \frac{d_1^TA^TA(d_1-d_2) M(d_1-d_2)}{(d_1-d_2)^TA^TA(d_1-d_2)}.
\end{align*}
It follows
\begin{equation}\label{eq:limit-A1-A2}
  \lim_{h\downarrow 0}\ds \frac{A_1+A_2}{h} = Md_1 - \ds \frac{d_1^TA^TA(d_1-d_2)}{(d_1-d_2)^TA^TA(d_1-d_2)} M(d_1-d_2).
\end{equation}
According to the definition of directional derivative in \cref{def-direction-diff} and to \cref{eq:limit-A1-A2}, we have
\begin{align}
 \mathfrak{D} \Psi(\boldsymbol{z}^*,\boldsymbol{d})&=
  \lim_{h\downarrow 0}\ds \frac{\Psi(\boldsymbol{z}^*+h\boldsymbol{d})-\Psi(\boldsymbol{z}^*)}{h},\nonumber\\
  & =  \lim_{h\downarrow 0}\ds \frac{1}{h}
  \begin{bmatrix}
  A_1+A_2\\
  hd_1
  \end{bmatrix},\nonumber\\
  &=
 \begin{bmatrix}
  (1+\widehat{\beta}(\boldsymbol{d})) M & -\widehat{\beta}(\boldsymbol{d}) M \\
  I  & 0
  \end{bmatrix}\boldsymbol{d},\label{eq:DPsi}
\end{align}
where
\begin{equation*}
   \widehat{\beta}(\boldsymbol{d}) = - \ds \frac{d_1^TA^TA(d_1-d_2)}{(d_1-d_2)^TA^TA(d_1-d_2)}.
\end{equation*}
Thus, for all $\boldsymbol{d}$, we have  proved \cref{AA(1)-direction-diff}.

\section{$\beta(\boldsymbol{z})$ and $\Psi(\boldsymbol{z})$ for AA(1) in the scalar case ($n=1$)}
\label{app:scalar}

For a scalar problem, with, as before, $r_k =x_k-q(x_k)$, we have from \cref{eq:AA-1-step-beta} that
\begin{equation}\label{eq:beta_scalar}
  \beta_k =\ds \frac{-r_k}{r_k-r_{k-1}}.
\end{equation}
It is well-known that, in the scalar case, AA(1) method \cref{eq:anderson-1-step}
reduces to the secant method for solving
$$f(x)=x-q(x),$$
as follows:
\begin{eqnarray*}
x_{k+1} &=&\ds q(x_k) +\ds \beta_k (q(x_k)-q(x_{k-1})),\\
&=&\ds \frac{-r_{k-1}}{r_k-r_{k-1}} q(x_k) +\ds \frac{r_k}{r_k-r_{k-1}}q(x_{k-1}),\\
&=& \ds \frac{-f(x_{k-1})(x_k-f(x_k))+f(x_k)(x_{k-1}-f(x_{k-1}))}{f(x_k)-f(x_{k-1})},\\
&=&\ds \frac{x_{k-1}f(x_k)-x_k f(x_{k-1})}{f(x_k)-f(x_{k-1})}=x_k -\ds \frac{x_k -x_{k-1}}{f(x_k)-f(x_{k-1})} f(x_k).
\end{eqnarray*}
It is also well-known that the secant method, for a simple root, converges $q$-superlinearly with order $p=\ds \frac{1+\sqrt{5}}{2}$, that is,
\begin{equation*}
  \lim_{k\rightarrow \infty}  \ds \frac{|x_k-x^*|}{|x_{k-1}-x^*|^{p}} =L>0.
\end{equation*}
It follows that
\begin{equation*}
   \lim_{k\rightarrow \infty} \ds \frac{|x_k-x^*|}{|x_{k-1}-x^*|} =0
   \qquad \textrm{and}
   \qquad
   \lim_{k\rightarrow \infty} \ds |x_k-x^*|^{1/k}=0.
\end{equation*}

\cref{tab:continuity-differentiability-Lip} indicates that $\beta(\boldsymbol{z})$ is not continuous
at $\boldsymbol{z}^*=\begin{bmatrix} x^*\\ x^*\end{bmatrix}$, and $\Psi(\boldsymbol{z})$ is not differentiable at $\boldsymbol{z}^*$.
Let $\beta_k=\beta(x_k,x_{k-1})$ with $\{x_k\}$ the sequence generated by AA(1).
We now show that, when $n=1$, $\lim_{k \rightarrow \infty} \beta_k =0$, despite
$\beta(\boldsymbol{z})$ not being continuous at $\boldsymbol{z}^*$.
\cref{rmk-Psi-diff-n=1} also indicates that
$ \mathfrak{ D} \Psi(\boldsymbol{z}^*,\boldsymbol{d})=B\boldsymbol{d}$,
with $\rho(B)=0$, except when $d_1=d_2$.
We will discuss how these results for $n=1$ lead to markedly different convergence behavior
for $\beta_k$ and the root-averaged error $\sigma_k$ from \cref{eq:sigma}, compared to the case $n>1$.

\begin{theorem}\label{thm:limit-betak}
Let $\beta_k=\beta(x_k,x_{k-1})$ with $\{x_k\}$ the sequence generated by AA(1) applied to iteration \cref{eq:fixed-point} in the scalar case ($n=1$). Then
\begin{equation}\label{eq:beta-limit-scalar}
    \lim_{k \rightarrow \infty} \beta_k =0.
\end{equation}
\end{theorem}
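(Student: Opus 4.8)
The plan is to work directly from the scalar formula \cref{eq:beta_scalar}, namely
$$\beta_k = \frac{-r_k}{r_k - r_{k-1}} = \frac{-f(x_k)}{f(x_k) - f(x_{k-1})},$$
where $f(x) = x - q(x) = r(x)$ and $f(x^*) = 0$. The degenerate case $r_k = r_{k-1}$ gives $\beta_k = 0$ by definition (equivalently $x_k = x_{k-1}$, since $r$ is a function of $x$), so it suffices to treat the case $r_k \neq r_{k-1}$, i.e.\ $x_k \neq x_{k-1}$.

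First I would rewrite the numerator and denominator using the mean value theorem, which applies because $q$, and hence $f$, is continuously differentiable near $x^*$. Writing $e_k = x_k - x^*$, there exist $\eta_k$ between $x_k$ and $x^*$, and $\xi_k$ between $x_k$ and $x_{k-1}$, such that $f(x_k) = f(x_k) - f(x^*) = f'(\eta_k)\,e_k$ and $f(x_k) - f(x_{k-1}) = f'(\xi_k)\,(e_k - e_{k-1})$. Substituting these gives the clean factorization
$$\beta_k = -\frac{f'(\eta_k)}{f'(\xi_k)} \cdot \frac{e_k}{e_k - e_{k-1}}.$$

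Next I would take the limit factor by factor. Since $x_k \to x^*$, both $\eta_k$ and $\xi_k$ converge to $x^*$, and continuity of $f'$ together with $f'(x^*) = 1 - q'(x^*) \neq 0$ (recall $q'(x^*) \neq 1$ since $|q'(x^*)| < 1$) yields $f'(\eta_k)/f'(\xi_k) \to 1$, so the first factor tends to $-1$. For the second factor I would divide numerator and denominator by $e_{k-1}$, obtaining
$$\frac{e_k}{e_k - e_{k-1}} = \frac{e_k/e_{k-1}}{e_k/e_{k-1} - 1},$$
and invoke the superlinear convergence of the secant method, already recorded above as $\lim_{k\to\infty} |x_k - x^*|/|x_{k-1}-x^*| = 0$, i.e.\ $e_k/e_{k-1} \to 0$. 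Hence the second factor tends to $0/(0-1) = 0$, and therefore $\beta_k \to (-1)\cdot 0 = 0$, which is \cref{eq:beta-limit-scalar}.

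The computation is short, so the only genuine issue is bookkeeping near the degeneracies: I must confirm the denominators never vanish along the tail of the sequence. The factor $f'(\xi_k)$ is bounded away from $0$ because $f'$ is continuous with $f'(x^*) \neq 0$; and $e_k - e_{k-1} \neq 0$ precisely when $x_k \neq x_{k-1}$, which is the case under consideration (the case $x_k = x_{k-1}$ having been disposed of by $\beta_k = 0$). Thus the key input is not a new estimate but the already-established superlinear rate $e_k/e_{k-1} \to 0$, which forces $e_k/(e_k - e_{k-1}) \to 0$ and drives $\beta_k$ to zero. This contrasts sharply with the discontinuity of $\beta(\boldsymbol{z})$ at $\boldsymbol{z}^*$ proved in \cref{prop:betax*}: along the actual iterate trajectory the approach direction is not fixed but rotates so that $e_k/e_{k-1}\to 0$, selecting the limiting value $0$ among all the directional limits exhibited there.
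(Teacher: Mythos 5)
Your proof is correct and follows essentially the same route as the paper's: both isolate the ratio $e_k/e_{k-1}$, invoke the superlinear convergence of the secant method to send it to zero, and use $r'(x^*)\neq 0$ to keep the remaining factor bounded. The only difference is cosmetic — you package the linearization via the mean value theorem ($f(x_k)=f'(\eta_k)e_k$, etc.), whereas the paper uses first-order Taylor expansions with vanishing remainders $Q(\cdot)$ — and your bookkeeping of the degenerate case $r_k=r_{k-1}$ is if anything slightly more careful than the paper's.
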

\begin{proof}
Using  \cref{eq:beta_scalar}, $r(x_k)=r(x^*)+ r'(x^*)(x_k-x^*)+Q(x_k-x^*) (x_k-x^*)$ with $\lim_{k\rightarrow \infty} Q(x_k-x^*)=0$,
and $r(x_{k-1})=r(x^*)+ r'(x^*)(x_{k-1}-x^*)+Q(x_{k-1}-x^*) (x_{k-1}-x^*)$ with $\lim_{k\rightarrow \infty} Q(x_{k-1}-x^*)=0$,
 we obtain
\begin{align*}
\beta_k& = - \ds \frac{r'(x^*)(x_k-x^*)+Q(x_k-x^*) (x_k-x^*)}{r'(x^*)(x_k-x_{k-1})+Q(x_k-x^*) (x_k-x^*)-Q(x_{k-1}-x^*) (x_{k-1}-x^*)},\\
  & = - \ds \frac{x_k-x^*}{x_{k-1}-x^*} \ \frac{r'(x^*)+Q(x_k-x^*)}{\ds \frac{ r'(x^*)(x_k-x^*+x^*-x_{k-1})+Q(x_k-x^*)(x_k-x^*)}{x_{k-1}-x^*}-Q(x_{k-1}-x^*) }.
\end{align*}
Since $\ds \frac{x_k-x^*}{x_{k-1}-x^*}\rightarrow 0$ as $k\rightarrow \infty $ due to the superlinear convergence of the secant method, and since $r'(x^*)\neq 0$ ($r'(x)$ is nonsingular),
\begin{equation*}
\lim_{k \rightarrow \infty} \beta_k=- 0\cdot \frac{r'(x^*)}{r'(x^*)}=0.
\end{equation*}
\end{proof}
We now consider a simple scalar example to illustrate how the theoretical results relate to numerical convergence behavior.

\begin{problem}\label{prob:scalar}
We solve the scalar equation $x^2-x-1=0$ ($n=1$) using iteration \cref{eq:fixed-point} with
\begin{equation*}
  q(x) = 1+\ds \frac{1}{x}.
\end{equation*}
\end{problem}

\begin{figure}[h]
\centering
\includegraphics[width=.49\textwidth]{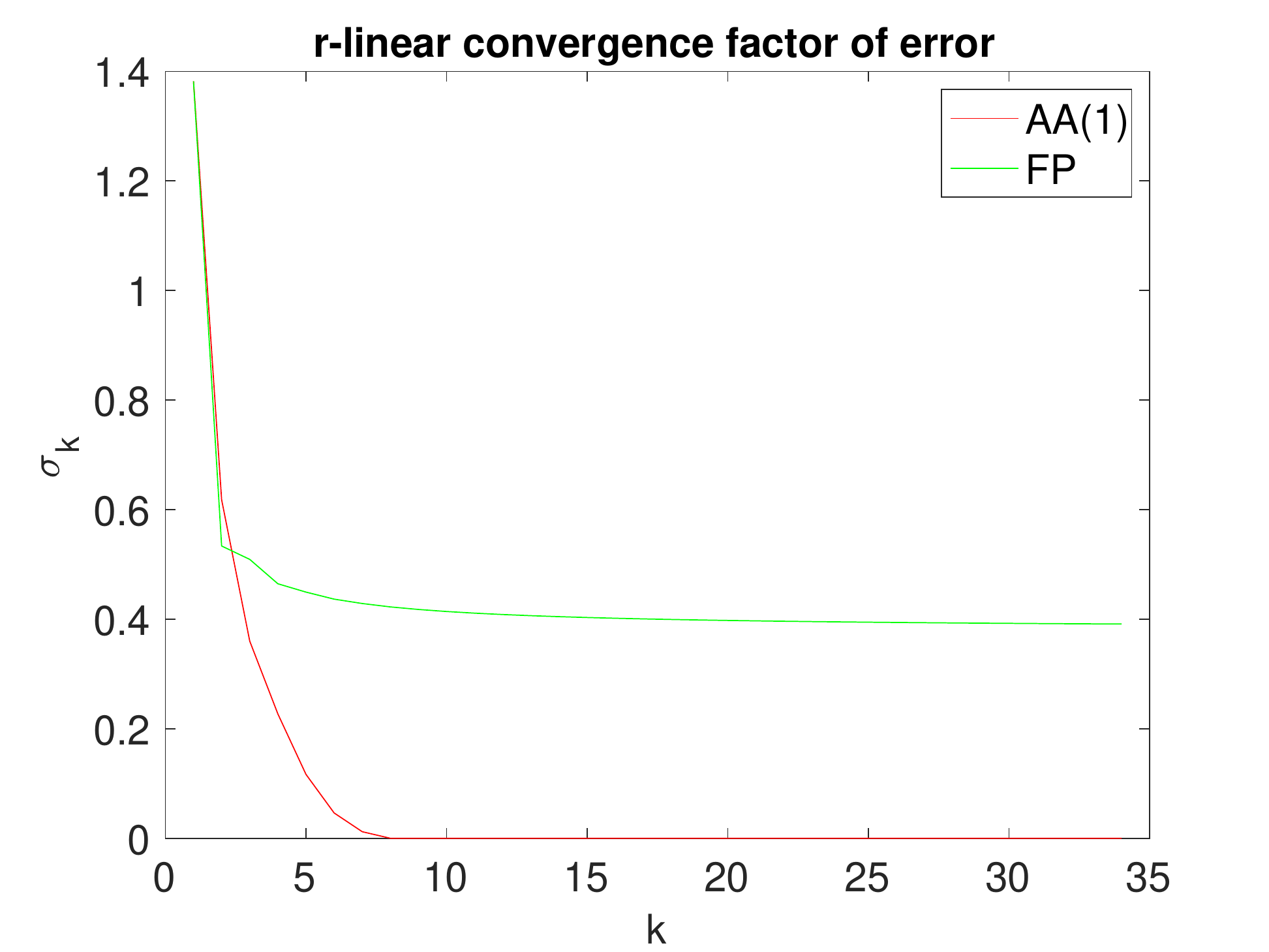}
\includegraphics[width=.49\textwidth]{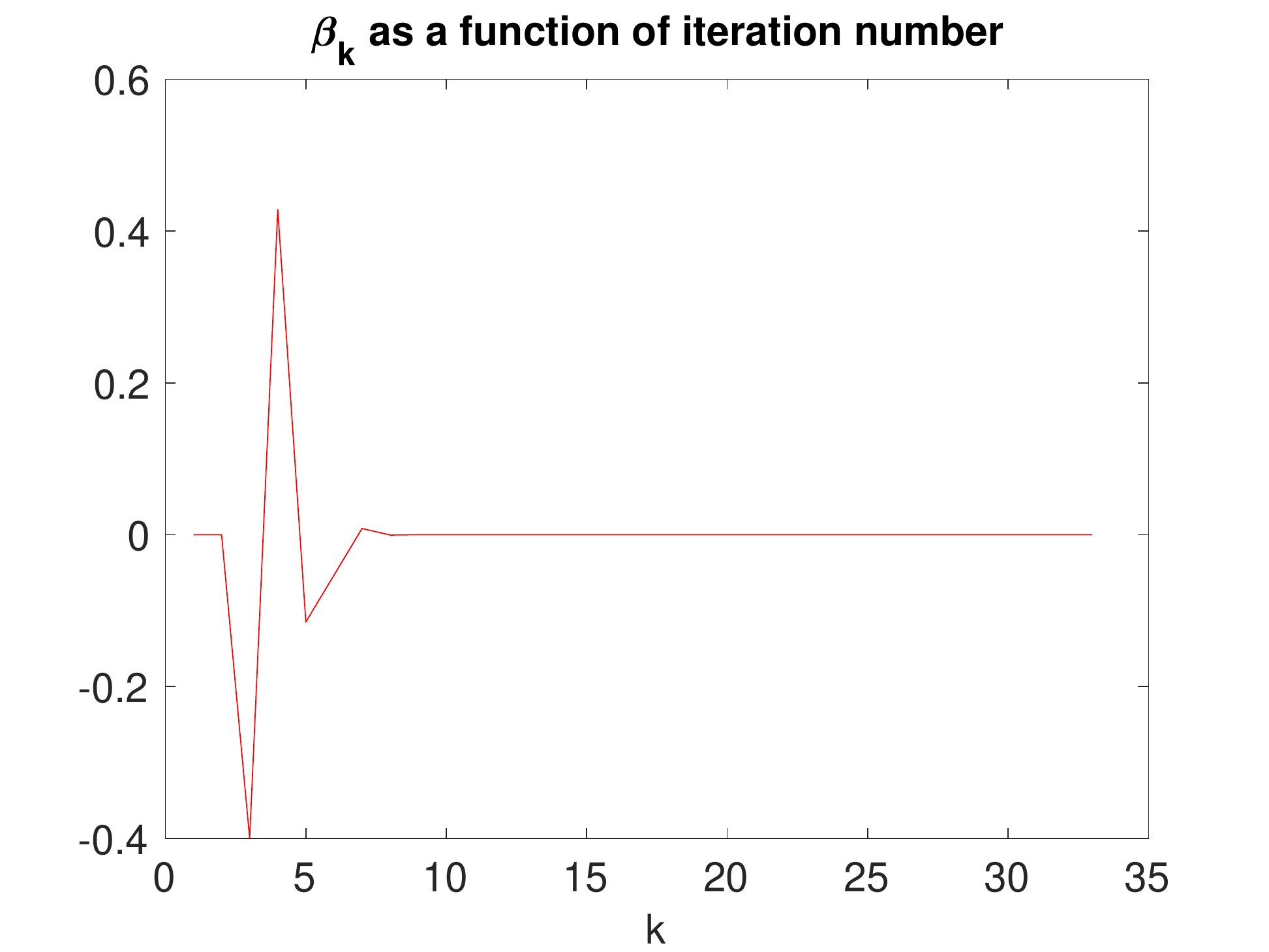}
\caption{\cref{prob:scalar} (scalar, $n=1$). (left panel) $\sigma_k$ as a function of iteration number $k$. (right panel) $\beta_k$ as a function of iteration number $k$.}  \label{AA-scalar-prob-plot}
\end{figure}

Consider initial guess $x_0=0.5$.  \cref{AA-scalar-prob-plot} shows $\beta_k$ and the root-averaged error $\sigma_k$ from \cref{eq:sigma} for iterations \cref{eq:fixed-point} and AA(1) as functions of iteration number $k$, for $x_k$ converging to $x^*=\ds \frac{1+\sqrt{5}}{2}$.
Consistent with the result from
\cref{thm:limit-betak} for this scalar problem, $\beta_k$ for AA(1) goes to 0 as $x_k\rightarrow x^*$ for $k\rightarrow \infty$.
The root-averaged error $\sigma_k$ for AA(1) converges to 0 as $x_k\rightarrow x^*$, consistent with the superlinear convergence of the secant method.
This superlinear convergence is also reflected in the finding of \cref{rmk-Psi-diff-n=1} that
$\mathfrak{ D} \Psi(\boldsymbol{z}^*,\boldsymbol{d})=B\boldsymbol{d}$ with $\rho(B)=0$ for almost all vectors $\boldsymbol{d}$ (except when $d_1=d_2$).
We also observe linear convergence for iteration \cref{eq:fixed-point}, with
$\rho_{q,x^*}=|q'(x^*)| =|-\ds \frac{1}{(x^*)^2}|=\ds \frac{2}{3+\sqrt{5}}\approx 0.382$.

In contrast, for the problem with $n>1$ in \cref{fig:simple-beta}, $\Psi(\boldsymbol{z})$ is not differentiable at $\boldsymbol{z}^*$ and the AA(1) convergence factor $\rho_{AA(1),x^*}$ cannot be determined from $\rho(\Psi'(\boldsymbol{ z}^*))$; AA(1) convergence is no longer superlinear. For $n>1$, $\beta_k$ no longer converges to 0 as $x_k \rightarrow x^*$, but is oscillatory, with $\beta(\boldsymbol{z})$ not being continuous at $\boldsymbol{z}^*$.
Despite $\beta(\boldsymbol{z})$ not being continuous at $\boldsymbol{z}^*$, $\Psi(\boldsymbol{z})$ is continuous
at $\boldsymbol{z}^*$, and AA(1) converges, albeit linearly instead of superlinearly, and despite the oscillations in $\beta_k$.

\section{Proof of \cref{AAm-prop-noncon-rank-zm+1=x*}}
\label{app:proof-lipsch-m}

Consider  $\boldsymbol{z} =\begin{bmatrix} z_{m+1}^T & z_m^T  & \ldots & z_1^T \end{bmatrix}^T $ with $z_{m+1}=x^*$ and such that $R(\boldsymbol{z})$ is rank-deficient.  Denote
$\boldsymbol{d} =\begin{bmatrix} d_{m+1}^T & d_m^T &  \ldots & d_1^T \end{bmatrix}^T$.
In the linear case, where $q(x)=Mx+b$ with $M=I-A$, we have
\begin{align}
  &R(\boldsymbol{z}) = AD(\boldsymbol{z}), \label{eq:R-form-linear}\\
  &Q(\boldsymbol{z}) = MD(\boldsymbol{z})=(I-A)D(\boldsymbol{z}), \label{eq:Q-form-linear} \\
  &\boldsymbol{\beta}(\boldsymbol{z}) = - \big(AD(\boldsymbol{z})\big)^{\dagger} (Az_{m+1}-b). \label{eq:beta-form-linear}
\end{align}
It then follows that
\begin{align}
  R(\boldsymbol{z}+\boldsymbol{d}) &= AD(\boldsymbol{z}+\boldsymbol{d})= AD(\boldsymbol{z})+AD(\boldsymbol{d})=: AD_1+AD_2=A\widehat{D}, \label{eq:R-z+d}\\
  Q(\boldsymbol{z}+\boldsymbol{d}) &=MD(\boldsymbol{z}+\boldsymbol{d})=(I-A)\widehat{D}, \label{eq:Q-z+d}\\
  \boldsymbol{\beta}(\boldsymbol{z}+\boldsymbol{d}) &= -\big( R(\boldsymbol{z}+\boldsymbol{d})\big)^{\dagger}(A(z_{m+1}+d_{m+1})-b).\label{eq:beta-z+d}
\end{align}
Since $z_{m+1}=x^*$, $\boldsymbol{\beta}(\boldsymbol{z})$   is zero.  From \cref{eq:Psi-AAm}, we have
\begin{equation*}
 \Psi(\boldsymbol{z}+\boldsymbol{d}) -\Psi(\boldsymbol{z})=
 \begin{bmatrix}
  q(x^*+d_{m+1})-q(x^*) +Q(\boldsymbol{z}+\boldsymbol{ d})\boldsymbol{\beta}(\boldsymbol{z}+\boldsymbol{ d})\\
   d_{m+1}\\
   \vdots \\
   d_2
  \end{bmatrix}.
\end{equation*}
Using \cref{eq:R-z+d,eq:Q-z+d,eq:beta-z+d}, we have
\begin{align}
 Q(\boldsymbol{z}+\boldsymbol{ d})\boldsymbol{\beta}(\boldsymbol{z}+ \boldsymbol{ d})=& -(I-A)\widehat{D}\big(A\widehat{D} \big)^{\dagger}(A(x^*+d_{m+1})-b),\nonumber \\
 =& -(A^{-1}-I)A\widehat{D}\big( A\widehat{D} \big)^{\dagger} Ad_{m+1}.\label{eq:Q-beta-simplification}
\end{align}
Since $A\widehat{D}\big( A\widehat{D}\big)^{\dagger} $ is  an orthogonal projection operator, $\|A\widehat{D}\big( A\widehat{D} \big)^{\dagger} \|_2 = 1$, or 0 when $\widehat{D}=0$.
Based on the above discussion, we can obtain
\begin{align*}
 \|\Psi(\boldsymbol{z}+\boldsymbol{d}) -\Psi(\boldsymbol{z})\|& \leq  \|q(x^*+d_{m+1})-q(x^*)\| +\|Q(\boldsymbol{z+d})\boldsymbol{\beta}(\boldsymbol{z+d})\| +\|\boldsymbol{d}\|,\\
 & \leq \|Md_{m+1}\| +\|A^{-1}-I\| \|Ad_{m+1}\| + \|\boldsymbol{d}\|,\\
 &\leq \big(\|I-A\| + \|A^{-1}-I\| \|A\| + 1\big) \|\boldsymbol{d}\|,\\
&\leq  \left( ( 1+ \|A^{-1}\| \|A\| ) \|I-A\|+1\right) \|\boldsymbol{d}\|.
\end{align*}
Thus, for all $\boldsymbol{d}$,
\begin{equation*}
 \|\Psi(\boldsymbol{z}+\boldsymbol{d})-\Psi(\boldsymbol{z}) \| \leq L \|\boldsymbol{d}\|,
\end{equation*}
where $L=( 1+\|A^{-1}\| \|A\| ) \|I-A\|+1$. This means that $\Psi(\boldsymbol{z})$ is Lipschitz continuous, and, hence, continuous, at $\boldsymbol{z}$ when $z_{m+1}=x^*$ and $R(\boldsymbol{z})$ is rank-deficient.

\section{Proof of \cref{thm:proof-directional-m}}
\label{app:proof-directional-m}

Let $\boldsymbol{d}=\begin{bmatrix} d_{m+1}^T & d_{m}^T & \ldots & d_{1}^T\end{bmatrix}^T$. Consider $\boldsymbol{z}=\boldsymbol{z}^*$. From \cref{eq:Psi-AAm}, we have
\begin{equation*}
 \Psi(\boldsymbol{z}^*+h\boldsymbol{d}) -\Psi(\boldsymbol{z}^*)=
 \begin{bmatrix}
  q(x^*+hd_{m+1})-q(x^*) +Q(\boldsymbol{z}^*+h\boldsymbol{ d})\boldsymbol{\beta}(\boldsymbol{z}^*+h\boldsymbol{ d})\\
   hd_{m+1}\\
   \vdots \\
   hd_2
  \end{bmatrix}.
\end{equation*}
First,
\begin{equation*}
q(x^*+hd_{m+1})-q(x^*)=M(x^*+hd)+b- (Mx^*+b) = hMd_{m+1}.
\end{equation*}
Then, we consider $Q(\boldsymbol{z}^*+h\boldsymbol{d})\boldsymbol{\beta}(\boldsymbol{z}^*+h\boldsymbol{d})$.
Note that
\begin{equation}\label{eq:AAm-Qbeta}
  Q(\boldsymbol{z}^*+h\boldsymbol{d})\boldsymbol{\beta}(\boldsymbol{z}^*+h\boldsymbol{d}) =- Q(\boldsymbol{z}^*+h\boldsymbol{d})\big(R(\boldsymbol{z}^*+h\boldsymbol{d})\big)^{\dagger}r(x^*+hd_{m+1}),
\end{equation}
where $R(\boldsymbol{z}^*+h\boldsymbol{d})$ is defined in \cref{eq:beta-continuous}, and
\begin{equation}\label{eq:expansion-r}
   r(x^*+h d_{m+1})=r'(x^*) hd_{m+1} +P(h d_{m+1}) h d_{m+1}\quad   \text{with}\quad \lim_{h \rightarrow 0} P(hd_{m+1})=0.
\end{equation}
It follows that
\begin{equation}\label{eq:limit-r}
  \lim_{h \rightarrow 0}\frac{r(x^*+hd_{m+1})}{h}= r'(x^*)d_{m+1}=Ad_{m+1}.
\end{equation}
Next, we claim that when $D(\boldsymbol{d})$ is full rank,
\begin{equation*}
  \lim_{h \rightarrow 0} \frac{  Q(\boldsymbol{z}^*+h\boldsymbol{d})\boldsymbol{\beta}(\boldsymbol{z}^*+h\boldsymbol{d})}{h}
  =-MD(\boldsymbol{d})\big(AD(\boldsymbol{d})\big)^{\dagger} Ad_{m+1}.
\end{equation*}
From \cref{eq:AAm-Qbeta} and \cref{eq:limit-r}, we only need to prove that
\begin{equation}\label{eq:to-prove}
   \lim_{h \rightarrow 0} - Q(\boldsymbol{z}^*+h\boldsymbol{d})\big(R(\boldsymbol{z}^*+h\boldsymbol{d})\big)^{\dagger}
   =-MD(\boldsymbol{d})\big(AD(\boldsymbol{d})\big)^{\dagger}.
\end{equation}
We prove the above statement in the following.
Using \cref{eq:expansion-r} for $x^*+hd_j$, we have
\begin{equation*}
    R(\boldsymbol{z}^*+h\boldsymbol{d}) =hr'(x^*)D(\boldsymbol{d})+h \Delta \quad   \text{with}\quad \lim_{h \rightarrow 0}\Delta=0,
\end{equation*}
where $\Delta$ is given by
\begin{equation*}
  \begin{bmatrix}
  P(hd_{m+1})d_{m+1}-P(hd_m)d_m & \ldots &P(hd_{m+1})d_{m+1}-P(hd_1)d_1
  \end{bmatrix}.
\end{equation*}

According to the definition of $Q$ (see \cref{eq:def-Q-general}) and $q(x)=x-r(x)$,
we have
\begin{equation*}
   Q(\boldsymbol{z}^*+h\boldsymbol{d}) =hD(\boldsymbol{d}) - h r'(x^*)D(\boldsymbol{d})-h \Delta.
\end{equation*}
Now we have
\begin{align*}
 - Q(\boldsymbol{z}^*+h\boldsymbol{d})\big(R(\boldsymbol{z}^*+h\boldsymbol{d})\big)^{\dagger}
   &=-h\big( D(\boldsymbol{d}) -   r'(x^*)D(\boldsymbol{d})-  \Delta\big)\big(h r'(x^*)D(\boldsymbol{d})+h\Delta\big)^{\dagger},\\
    &=\big( -D(\boldsymbol{d}) +  r'(x^*)D(\boldsymbol{d})+ \Delta\big)\big(r'(x^*)D(\boldsymbol{d})+\Delta\big)^{\dagger},\\
     &= (-D(\boldsymbol{d})+AD(\boldsymbol{d})+\Delta)\big( AD(\boldsymbol{d})+\Delta \big)^{\dagger},\\
      &= (-MD(\boldsymbol{d})+\Delta)\big( AD(\boldsymbol{d})+\Delta \big)^{\dagger}.
\end{align*}
When $D(\boldsymbol{d})$ is full rank and $h$ is small enough, rank($AD(\boldsymbol{d})+\Delta$)=rank($AD(\boldsymbol{d})$) because singular values are continuous and $A$ is nonsingular. Thus, according to \cite[Corollary 3.5]{stewart1977perturbation},
\begin{equation}\label{eq:limit-speudo-inverse}
  \lim_{h\rightarrow 0} \big( AD(\boldsymbol{d})+\Delta \big)^{\dagger} =\big( AD(\boldsymbol{d})  \big)^{\dagger}.
\end{equation}
Then \cref{eq:to-prove} follows.

Let $\widehat{\Delta}=q(x^*+hd_{m+1})-q(x^*) +Q(\boldsymbol{z}^*+h\boldsymbol{ d})\boldsymbol{\beta}(\boldsymbol{z}^*+h\boldsymbol{ d})$. Then
\begin{equation*}
\lim_{h\rightarrow 0}  \ds \frac{\widehat{\Delta}}{h} =Md_{m+1} -MD(\boldsymbol{d})\big(AD(\boldsymbol{d})\big)^{\dagger}Ad_{m+1} =Md_{m+1} +MD(\boldsymbol{d})\widehat{\boldsymbol{\beta}}(\boldsymbol{d}).
\end{equation*}
It follows that  $\displaystyle\lim_{h\rightarrow 0}   \frac{\Psi(\boldsymbol{z}^*+h\boldsymbol{d})-\Psi(\boldsymbol{z}^*)}{h}$  can be written as in \cref{eq:-direct-nonlinear-AAm-matrix}.



\bibliographystyle{siamplain}
\bibliography{AAref}
\end{document}